\theoremstyle{plain}
\newtheorem{proposition}{Proposition}[section]
\newtheorem{theorem}[proposition]{Theorem}
\newtheorem{corollary}[proposition]{Corollary}
\newtheorem{lemma}[proposition]{Lemma}
\theoremstyle{definition}
\newtheorem{definition}[proposition]{Definition}
\newtheorem{example}[proposition]{Example}
\newtheorem{remark}[proposition]{Remark}
\newcommand{\Z}{\mathbb Z}
\newcommand{\ip}[2]{\langle #1,#2 \rangle}
\newcommand{\wap}{{\operatorname{WAP}}}
\newcommand{\aone}{\Box}
\newcommand{\atwo}{\Diamond}
\newcommand{\id}{\mathrm{id}}
\newcommand{\BZ}{\beta\Z}
\newcommand{\Uf}{\mathcal U}
\newcommand{\npU}{\mathbb Z^*}
\renewcommand{\S}{\mathcal S}
\newcommand{\T}{\mathcal T}
\newcommand{\la}{\langle}
\newcommand{\ra}{\rangle}
\numberwithin{equation}{section}
\begin{document}

\title{Shift invariant preduals of $\ell_1(\Z)$}
\author{Matthew Daws\\\normalsize{\texttt{matt.daws@cantab.net}}\and Richard Haydon\\\normalsize{\texttt{richard.haydon@bnc.ox.ac.uk}}\and Thomas Schlumprecht\thanks{
 Supported by  NSF grants DMS0856148 and DMS0556013.}\\\normalsize{\texttt{schlump@math.tamu.edu}}\and Stuart White\\\normalsize{\texttt{stuart.white@glasgow.ac.uk}}}
\maketitle

\begin{abstract}
The Banach space $\ell_1(\Z)$ admits many non-isomorphic preduals, for example,
$C(K)$ for any compact countable space $K$, along with many more exotic Banach spaces.
In this paper, we impose an extra condition: the predual must make the bilateral
shift on $\ell_1(\Z)$ weak$^*$-continuous.  This is equivalent to
making the natural convolution multiplication on $\ell_1(\Z)$
separately weak$*$-continuous and so turning $\ell_1(\Z)$ into a dual
Banach algebra.  We call such preduals \emph{shift-invariant}.
It is known that the only shift-invariant predual arising from the standard duality
between $C_0(K)$ (for countable locally compact $K$) and $\ell_1(\Z)$ is $c_0(\Z)$.
We provide an explicit construction of an uncountable family of distinct preduals which
do make the bilateral shift weak$^*$-continuous.  Using Szlenk index arguments, we show
that merely as Banach spaces, these are all isomorphic to $c_0$.
We then build some theory to study such preduals, showing that
they arise from certain semigroup compactifications of $\Z$.  This allows us to produce
a large number of other examples, including non-isometric
preduals, and preduals which are not Banach space isomorphic to $c_0$.
\end{abstract}

\section{Introduction}

The Banach space $\ell_1(\Z)$ has a multitude of preduals beyond the canonical pairing between $c_0(\Z)$ and $\ell_1(\Z)$.  For example,
if $X$ is any countable, compact Hausdorff space, then $C(X)^* = M(X) =
\ell_1(X) \cong \ell_1(\Z)$ as all measures are countably additive.
However, preduals of $\ell_1$ can be very exotic.  In \cite{bl}, it was shown that
there exist isometric preduals of $\ell_1$ which are not isomorphic to a complemented
subspace of any $C(K)$ space.  In \cite{bd}, a predual $Y$ of $\ell_1(\Z)$ was constructed
such that $Y$ has the Radon-Nikodym property and each infinite-dimensional subspace of
$Y$ contains a further infinite-dimensional subspace which is reflexive.  This
construction was an inspiration for the recent solution to the scalar-compact problem
\cite{ah}: this exotic Banach space is also an $\ell_1$ predual.  Indeed, in \cite{fos},
it is shown that if $X$ is any Banach space with separable dual, then
there is an $\ell_1$ predual $E$ which contains an isomorphic copy of $X$.
In this paper, we do not assume that a predual $E$ of  $\ell_1(\Z)$ is isometric, and
instead we allow any isomorphism between $E^*$ and $\ell_1(\Z)$.

Every predual of $\ell_1(\Z)$ can be canonically regarded as a subspace $E$ of $\ell_\infty(\Z)$, albeit in a possibly non-isometric fashion.  This paper addresses the question of which preduals are invariant under the bilateral shift operator on $\ell_\infty(\Z)$.  Equivalently, this asks which preduals make the bilateral shift operator on $\ell_1(\Z)$ weak$^*$-continuous.  Clearly $c_0(\Z)$ is one such predual, but we are interested in the existence
of other preduals: by results of \cite{dlpw} these are necessarily slightly
exotic (see the discussion at the end of Section~\ref{prelim} below). In particular, given a countable, compact Hausdorff space, the canonical duality between $C(X)$ and $M(X)=\ell_1(X)\cong\ell_1(\Z)$ cannot make the bilateral shift operator weak$^*$-continuous.

Our interest in this topic is motivated by Banach algebra theory.  The Banach space $\ell_1(\Z)$ becomes a Banach algebra for the convolution product:
\begin{equation}
(f*g)(n) = \sum_{k\in\Z} f(k) g(n-k), \quad f,g\in\ell_1(\Z),\ n\in\Z.
\end{equation}
A Banach algebra is a \emph{dual Banach algebra} if it is a dual space of some Banach space and the product is separately weak$^*$-continuous, see \cite{Runde2}.  In particular $\ell_1(\Z)$ is a dual Banach algebra when equipped with the standard predual $c_0(\Z)$.  The standard warning in the theory of dual Banach algebras is that, unlike the situation with von Neumann algebras, the predual need not be unique: indeed, give $\ell_1$ the zero product, so that any predual turns $\ell_1$ into a dual Banach algebra.  However, there has been little investigation of what happens in natural classes of Banach algebras; see Section~\ref{prelim} for further details.  Motivated by Sakai's classical work on the preduals of von Neumann algebras, the first named author asked in \cite{daws} whether the weak$^*$-topology induced by $c_0(\Z)$ is the unique way of turning $\ell_1(\Z)$ into a dual Banach algebra.  The results of this paper answer this question negatively: preduals on the
convolution algebra $\ell_1(\Z)$ are far from unique.  An easy calculation (see
Proposition~\ref{equivalentforms} below) shows that a predual for $\ell_1(\Z)$ makes
the multiplication separately weak$^*$-continuous if, and only if, it is shift-invariant 
regarded as a concrete subspace of $\ell_\infty(\Z)$.

We aim to investigate these preduals from both the Banach algebra and Banach space
viewpoint.  From the algebra viewpoint, our focus is on exotic weak$^*$-topologies making $\ell_1(\Z)$ into a dual Banach algebra. For shift-invariant preduals for $\ell_1(\Z)$, we examine possible limit points of the set of point masses.  From the Banach space viewpoint, we initiate the Banach space classifcation of shift-invariant preduals.  It is important to note that two shift-invariant preduals may be isomorphic as Banach spaces, yet induce very different weak$^*$-topologies, so these two viewpoints ask quite different questions about our predual.  Although it does not really matter in this paper, we work with complex scalars throughout.

In Section \ref{haydon} we construct a non-canonical shift-invariant predual.  This predual is defined to be the closed linear span $E$ in $\ell_\infty(\Z)$ of bilateral shifts of the element
\begin{equation}
x_0=(\cdots0\ 0\ 1\ 2^{-1}\ 2^{-1}\ 2^{-2}\ 2^{-1}\ 2^{-2}\ 2^{-2}\ 2^{-3}\ 2^{-1}\ \cdots),
\end{equation}
where the $1$ appears in the zero'th component of $x_0$ and, for $n>0$, the number of $1$'s in the binary expansion of $n$ determine the negative exponent of $2$ in $x_0(n)$. We give a direct proof that $E$ provides a predual of $\ell_1(\Z)$, which also explicitly describes those elements of $\ell_\infty(\Z)\cong C(\BZ)$ which lie in $E$.  With respect to this predual, $\delta_{2^n}\rightarrow \delta_0/2$ in the weak$^*$-topology; indeed it is easily seen that for all $m\in\Z$, $x_0(2^n+m)\rightarrow x_0(m)/2$ as $n\rightarrow\infty$.  However from the Banach space prospective, $E$ is isomorphic to $c_0$. We demonstrate this by using Benjamini's work on $G$-spaces to observe that $E$ is a $C(K)$ space for some countable compact $K$ (though of course the duality between $E$ and $\ell_1(\Z)$ is not obtained via the canonical identification of $C(K)$ as a predual of $\ell_1(\Z)$) and then calculating the Szlenk index of $E$.

In Section \ref{semigroup}, we work more abstractly, developing a general framework for the study of shift-invariant preduals in terms of compact semigroup compactifications of $\Z$. We show in Theorem \ref{char-preduals} that every shift-invariant predual of $\ell_1(\Z)$ is the preannihilator of the kernel of a bounded homomorphism $\Theta:M(\S)\rightarrow\ell_1(\Z)$ which is also a projection for some suitable semigroup compactification $\S$ of $\Z$. This machinery enables us to quickly construct a variety of new preduals. In Section \ref{eg} we give examples of how this can be done and show that the example described in the previous paragraph also fits into this setting.  We are able to produce preduals by adding finitely many exotic weak$^*$-limit points of the point masses, such as the limit $\delta_{2^n}\rightarrow \delta_0/2$ appearing in our previous predual.  In particular, given $a_1,\cdots,a_k$ in $\ell_1(\Z)$ and disjoint infinite sets $J^{(1)},\cdots,J^{(k)}$ in $\Z$ we are able to produce a shift-invariant predual for which $\delta_n\rightarrow a_i$ as $|n|\rightarrow\infty$ through the set $J^{(i)}$ provided:
\begin{itemize}
\item The $a_i$ are power bounded in $\ell_1(\Z)$ (i.e. $\sup_m\|a_i^m\|_1<\infty$) and convolution powers become uniformly small (i.e. $\|a_i^m\|_\infty\rightarrow0$ as $m\rightarrow\infty$);
\item The sets $J^{(i)}$ are suitably sparse in a sense that will be made precise later.
\end{itemize}
We use the approach to construct shift-invariant preduals which are not isomorphic as Banach spaces to $c_0$ (see Theorem~\ref{not_co_thm}) and shift-invariant preduals which are not isometrically induced (see Example~\ref{not_iso_eg}).

It is also possible replace $\Z$ by any countable discrete group $G$ (or even a semigroup) and ask for dual Banach algebra preduals of $\ell_1(G)$ other than $c_0(G)$. The work of \cite{dlpw} applies in this context, and shows that such preduals cannot be obtained by the canonical duality between $C(X)$, for a countable, compact Hausdorff space $X$ and $M(X)\cong \ell_1(G)$.  We do not pursue arbitrary groups here, as even in the case of $\Z$, which has a very simple algebraic structure, the construction of shift-invariant preduals is somewhat involved.  In the semigroup context, however, it can be much easier to produce such preduals: see \cite{semipaper} for a discussion of shift-invariant preduals on $\Z\times\Z^+$.

\paragraph*{Acknowledgements} This paper was iniatated during visits of Matt Daws and
Richard Haydon to Thomas Schlumprecht and Stuart White at Texas A\&{}M University.
Matt Daws and Richard Haydon would like to thank the faculty at Texas A\&{}M for their
hospitality.  Matt Daws and Stuart White worked on this paper while attending a workshop
at the University of Leeds which was supported by EPSRC grant EP/I002316/1.

\section{Shift-invariant preduals}\label{prelim}

A \emph{dual Banach algebra} is a Banach algebra which is also a dual Banach space,
such that the product is separately weak$^*$-continuous.  The term was introduced
in \cite{Runde2}, but the concept had been studied before, see \cite[Section~4]{kai}
or \cite{young}.  A C$^*$-algebra $M$ which is \emph{isometric} to a dual space is a
W$^*$-algebra, and then the product, and the involution, are automatically 
weak$^*$-continuous, and $M$ can be weak$^*$-represented on a Hilbert space, that is,
$M$ is a von Neumann algebra, see \cite{sakai}.  Furthermore, in this case, the
predual of $M$ is unique, isometrically.  However, Pe{\l}czy\'nski showed in
\cite{pel} that $\ell_\infty$ and $L_\infty[0,1]$ are isomorphic as Banach spaces
(but not isometrically isomorphic), while of course $\ell_1$ and $L_1[0,1]$ are not
isomorphic.  Thus the predual of a von Neumann algebra is not isomorphically unique.
Authors Matt Daws and Stuart White together with Hung Le Pham showed in
\cite[Theorem~5.2]{dlpw}
that a Banach algebra isomorphism (not necessarily isometric) between a von Neumann algebra
and a dual Banach algebra is always weak$^*$-continuous.  For further discussion of
the uniqueness of preduals for dual Banach algebras, see \cite{dlpw, semipaper}.

The normal cohomology (that is, topological cohomology taking account of the
weak$^*$-topology) of von Neumann algebras has been extensively studied,
see \cite{ss2,cpss} for example.
Runde was interested in the dual Banach algebra version of this theory in \cite{Runde2}.
For example, he showed in \cite{runde} that for a locally compact group $G$,
the first weak$^*$-continuous cohomology for $M(G)$ with values in a normal bimodule is
trivial if and only if $G$ is \emph{amenable}.  If we do not take account of the
weak$^*$-topology, then $G$ is forced to be discrete as well, \cite{DGH}.
Of course, here we have to specify the canonical predual $C_0(G)$.  It would be
interesting to know how varying the predual (if possible) affects the cohomological
properties of $M(G)$.

We write $\ip{\cdot}{\cdot}$ for the bilinear pairing between a Banach space and its dual.
Given a closed subspace $F\subseteq\ell_\infty(\Z)$, the dual space $F^*$ is canonically
isometrically isomorphic to $\ell_\infty(\Z)^* / F^\perp$ where $F^\perp=\{\Phi
\in\ell_\infty(\Z)^* : \ip{\Phi}{x}=0,\ \forall x\in F\}$. Let $\iota_F:\ell_1(\Z)\rightarrow F^*$ be the composition of the canonical embedding
$\kappa_{\ell_1(\Z)}:\ell_1(\Z)\rightarrow\ell_1(\Z)^{**}=\ell_\infty(\Z)^*$ with the
restriction map $\ell_\infty(\Z)^*\rightarrow F^*$.  Thus $\ip{\iota_F(a)}{x} =
\ip{x}{a}$ for $a\in\ell_1(\Z)$ and $x\in F$.
We will say that such an $F$ is a \emph{concrete predual} for $\ell_1(\Z)$ if the map $\iota_F$ is an isomorphism (which is not assumed to be isometric). The next lemma shows that we lose nothing by working with these concrete preduals, and so henceforth we shall do so.

\begin{lemma}
Let $E$ be a Banach space and $\theta:\ell_1(\Z)\rightarrow E^*$ be an isomorphism.  Then the map $\theta^*\kappa_E:E\rightarrow\ell_\infty(\Z)$ is an isomorphism
onto its range, say $F\subseteq\ell_\infty(\Z)$.  Furthermore, $\iota_F$ is an isomorphism so that $F$ is a concrete predual for $\ell_1(\Z)$ and the weak$^*$-topologies induced by the pairings $(\ell_1(\Z)\stackrel{\theta}{\cong}E^*,E)$ and $(\ell_1(\Z),F)$ agree.
That is, given a net $(a_\alpha)$ in $\ell_1(\Z)$, we have that $\lim_\alpha \ip{\theta(a_\alpha)}{x}=0$ for all $x\in E$ if and only if $\lim_\alpha \ip{y}{a_\alpha}=0$ for all $y\in F$.
\end{lemma}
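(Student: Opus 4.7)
The plan is to use the adjoint of $\theta$ to embed $E$ isomorphically into $\ell_\infty(\Z)$, and then verify by a short pairing calculation that the ``abstract'' duality $(E,\ell_1(\Z))$ coming from $\theta$ agrees with the ``concrete'' duality $(F,\ell_1(\Z))$ restricted from $(\ell_\infty(\Z),\ell_1(\Z))$. The key structural point is that $\theta^*$ turns $E^{**}$ into $\ell_\infty(\Z)$ isomorphically, and $\kappa_E$ embeds $E$ isometrically into $E^{**}$.

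First I would observe that, because $\theta$ is an isomorphism, so is its adjoint $\theta^*:E^{**}\to\ell_1(\Z)^*=\ell_\infty(\Z)$; as $\kappa_E$ is isometric, the composition $T:=\theta^*\kappa_E:E\to\ell_\infty(\Z)$ is bounded below. Hence $F:=T(E)$ is closed and the corestriction $S:E\to F$ of $T$ is a Banach space isomorphism. This gives the first assertion of the lemma.

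Next I would identify $\iota_F$ in terms of $S$ and $\theta$. For $a\in\ell_1(\Z)$ and $x\in E$, unwinding the definitions yields
\[
\ip{S^*\iota_F(a)}{x} \;=\; \ip{\iota_F(a)}{Sx} \;=\; \ip{Sx}{a} \;=\; \ip{\kappa_E(x)}{\theta(a)} \;=\; \ip{\theta(a)}{x},
\]
using, in order, the definition of $S^*$, the defining property of $\iota_F$ recalled in the excerpt, the definition of $\theta^*$ (with $Sx=\theta^*\kappa_E(x)$), and the canonical identification of $\kappa_E(x)$ with evaluation at $x$. Thus $S^*\iota_F=\theta$, and since both $S^*$ and $\theta$ are isomorphisms, so is $\iota_F=(S^*)^{-1}\theta$, which exhibits $F$ as a concrete predual.

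Finally, the same chain of equalities shows $\ip{\theta(a_\alpha)}{x}=\ip{Sx}{a_\alpha}$ for every net $(a_\alpha)$ in $\ell_1(\Z)$ and every $x\in E$. Since $S$ is a bijection of $E$ onto $F$, the condition $\ip{\theta(a_\alpha)}{x}\to0$ for all $x\in E$ is equivalent to $\ip{y}{a_\alpha}\to0$ for all $y\in F$, giving the claimed equivalence of the two weak$^*$-topologies. I do not expect any substantive obstacle: the proof is pure duality bookkeeping, and the only real content is the automatic fact that $\theta^*\kappa_E$ is an isomorphism onto a closed subspace of $\ell_\infty(\Z)$. Care is required only to keep the direction of each pairing straight.
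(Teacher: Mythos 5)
Your proof is correct and follows essentially the same route as the paper, hinging on the identity $S^*\iota_F=\theta$ (the paper's $T^*\iota_F=\theta$) obtained by unwinding the duality pairings. The only cosmetic difference is that you deduce boundedness below of $\theta^*\kappa_E$ directly from $\theta^*$ being an isomorphism and $\kappa_E$ being an isometry, whereas the paper derives the same estimate from the auxiliary identity $T^*\kappa_{\ell_1(\Z)}=\theta$ via a supremum calculation.
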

\begin{proof}
Let $T=\theta^*\kappa_E:E\rightarrow\ell_\infty(\Z)$.  Since, for $a\in\ell_1(\Z)$ and
$x\in E$,
\begin{align} \ip{T^*\kappa_{\ell_1(\Z)}(a)}{x} = \ip{T(x)}{a}
= \ip{\theta^*\kappa_E(x)}{a} = \ip{\theta(a)}{x}, \end{align}
it follows that $T^* \kappa_{\ell_1(\Z)} = \theta$.
So, for $x\in E$,
\begin{align} \|T(x)\| &= \sup\{ |\ip{T(x)}{a}| : a\in\ell_1(\Z), \|a\|\leq1 \} \nonumber \\
&= \sup\{ |\ip{\theta(a)}{x}| : a\in\ell_1(\Z), \|a\|\leq1 \}
\geq \frac{\|x\|}{\|\theta^{-1}\|}.
\end{align}
As $T$ is bounded below, we regard $T$ as being an isomorphism onto its range $F$.  Then, for $a\in\ell_1(\Z)$ and $x\in E$,
\begin{align} \ip{T^* \iota_F(a)}{x} &= \ip{T(x)}{a} = \ip{\theta(a)}{x}, \end{align}
so that $T^* \iota_F = \theta$.  Hence $\iota_F = (T^*)^{-1} \theta$ is an isomorphism,
and so $F$ is a concrete predual of $\ell_1(\Z)$.

A net $(a_\alpha)$ in $\ell_1(\Z)$ is null for the $(\ell_1(\Z),F)$ topology if
and only if
\begin{equation} 0 = \lim_\alpha \ip{T(x)}{a_\alpha} = \lim_\alpha \ip{\theta(a_\alpha)}{x},
\qquad x\in E. \end{equation}
That is, if and only if $(\theta(a_\alpha))$ is weak$^*$-null in $E^*$, as required.
\end{proof}

It is easily checked (see \cite[Proposition 2.2]{dlpw}) that in the situation above, $\theta$ is isometric if and only if $\iota_F$ is isometric.  In this case we say that the predual is an \emph{isometric} predual of $\ell_1(\Z)$. The setting of concrete preduals also enables us to easily detect whether two preduals $F_1,F_2\subset\ell_\infty(\Z)$ induce the same weak$^*$-topology on $\ell^1(\Z)$. This happens if, and only if, they are equal as subspaces.

\begin{lemma}\label{concreteequal}
Let $E_1$ and $E_2$ be preduals of $\ell_1(\Z)$, and use these to induce concrete preduals
$F_1,F_2 \subseteq \ell_\infty(\Z)$ as above.  Then $E_1$ and $E_2$ induce the same
weak$^*$-topology on $\ell_1(\Z)$ if and only if $F_1=F_2$.
\end{lemma}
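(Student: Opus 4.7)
The implication $(\Leftarrow)$ is immediate: if $F_1 = F_2$, then the bilinear pairings $(\ell_1(\Z), F_i)$ are literally the same, so they induce the same weak$^*$-topology. For the converse, apply the preceding lemma to identify the weak$^*$-topology induced by each $E_i$ with the topology $\sigma(\ell_1(\Z), F_i)$ determined by the concrete predual $F_i \subseteq \ell_\infty(\Z) = \ell_1(\Z)^*$. This reduces matters to the standard duality statement: if two subspaces $F_1, F_2 \subseteq \ell_\infty(\Z)$ induce the same locally convex topology on $\ell_1(\Z)$, then $F_1 = F_2$.

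My plan is to prove this by identifying each $F_i$ with the full continuous dual of $(\ell_1(\Z), \sigma(\ell_1(\Z), F_i))$. The containment of $F_i$ in the continuous dual is by definition of the weak topology. For the reverse inclusion, suppose $\phi \in \ell_\infty(\Z)$ is $\sigma(\ell_1(\Z), F_i)$-continuous: then there exist $y_1, \dots, y_n \in F_i$ and $\delta > 0$ such that $|\ip{y_j}{a}| < \delta$ for each $j$ forces $|\phi(a)| \le 1$, and by homogeneity $\bigcap_{j=1}^n \ker y_j \subseteq \ker \phi$, so a standard finite-codimension argument places $\phi \in \mathrm{span}\{y_1, \dots, y_n\} \subseteq F_i$. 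Injectivity of $\iota_{F_i}$ (which holds since $\iota_{F_i}$ is assumed to be an isomorphism) means that $F_i$ separates points of $\ell_1(\Z)$, so $\sigma(\ell_1(\Z), F_i)$ is Hausdorff and this dual identification is genuine.

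Once both $F_1$ and $F_2$ are identified with the continuous dual of the same topology, the equality $F_1 = F_2$ is immediate. There is no serious obstacle here; the only care needed is to verify Hausdorffness of the weak topologies (via injectivity of $\iota_{F_i}$) and to correctly invoke the finite-codimension lemma that recovers continuous linear functionals on a weak topology as elements of the defining subspace.
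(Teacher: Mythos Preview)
Your proof is correct and takes a cleaner, more standard route than the paper's. The paper argues by contradiction: assuming $x\in F_2\setminus F_1$, it uses Hahn--Banach to produce $\Lambda\in\ell_\infty(\Z)^*$ vanishing on $F_1$ with $\ip{\Lambda}{x}=1$, then approximates $\Lambda$ weak$^*$ by a bounded net $(a_\alpha)\subseteq\ell_1(\Z)$, and derives a contradiction by chasing this net through both weak$^*$-topologies. Your approach instead invokes the general locally convex fact that the continuous dual of $(X,\sigma(X,Y))$ is exactly $Y$ whenever $Y$ separates points; since the preceding lemma already identifies each $E_i$-induced weak$^*$-topology with $\sigma(\ell_1(\Z),F_i)$, equality of topologies immediately gives equality of continuous duals, hence $F_1=F_2$. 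Your argument avoids the bidual and the approximation step entirely, at the cost of citing a standard (but perhaps less self-contained) duality lemma; the paper's proof is more hands-on but also more specific to the setup.
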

\begin{proof}
It is immediate from the previous lemma that $E_1$ and $E_2$ induce the same weak$^*$-topology when $F_1=F_2$.  Conversely, for $i=1,2$, let $\theta_i:\ell_1(\Z)\rightarrow E_i^*$ be an isomorphism, and suppose that these induce the same weak$^*$-topology
on $\ell_1(\Z)$.  Towards a contradiction, suppose there exists $x\in F_2\setminus F_1$.
By Hahn-Banach, there exists $\Lambda\in \ell_\infty(\Z)^*$ with $\ip{\Lambda}{x}=1$
and $\ip{\Lambda}{y}=0$ for each $y\in F_1$.  Let $(a_\alpha)$ be a bounded net in
$\ell_1(\Z)$ which converges to $\Lambda$ weak$^*$ in $\ell_\infty(\Z)^*$.  Then
$\lim_\alpha \ip{y}{a_\alpha}=0$ for $y\in F_1$, so by the previous lemma,
$(\theta_1(a_\alpha))$ is weak$^*$-null in $E_1^*$.  By assumption, it follows
that $(\theta_2(a_\alpha))$ is weak$^*$-null in $E_2^*$, but this contradicts that
$1 = \ip{\Lambda}{x} = \lim_\alpha \ip{x}{a_\alpha}$, as $x\in F_2$.  This shows that
$F_2 \subseteq F_1$, and analogously, $F_1 \subseteq F_2$, as required.
\end{proof}
\noindent In a similar vein to the lemma above, concrete preduals $F_1\subseteq F_2\subseteq \ell_\infty(\Z)$ must be equal. Of course, it is possible that 
preduals $F_1,F_2\subseteq\ell_\infty(\Z)$ inducing different weak$^*$-topologies are isomorphic as Banach spaces. Examples of this phenomena will be given in Section \ref{haydon}.

We now turn to the preduals which interest us in this paper. We call a predual satisfying the equivalent conditions of the following easy proposition \emph{shift-invariant}.

\begin{proposition}\label{equivalentforms}
Let $F\subseteq\ell_\infty(\Z)$ be a concrete predual for $\ell_1(\Z)$.  Then the following are equivalent:
\begin{enumerate}
\item The bilateral shift on $\ell_1(\Z)$ is weak$^*$-continuous, with respect to $F$;
\item The subspace $F$ is invariant under the bilateral shift on $\ell_\infty(\Z)$;
\item $\ell_1(\Z)$ is a dual Banach algebra, with respect to $F$.
\end{enumerate}
\end{proposition}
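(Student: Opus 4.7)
The plan is to prove the cyclic chain $(1) \Leftrightarrow (2)$, $(3) \Rightarrow (1)$, $(2) \Rightarrow (3)$, using the standard duality between weak$^*$-continuity and adjoint operators together with explicit dual pairing formulas.

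For $(1) \Leftrightarrow (2)$, I would first record, by reindexing the defining sum, the identity $\ip{y}{Sa} = \ip{Ty}{a}$ for $y \in \ell_\infty(\Z)$ and $a \in \ell_1(\Z)$, where $S$ is the bilateral shift on $\ell_1(\Z)$ and $T$ the corresponding shift on $\ell_\infty(\Z)$ given by $(Ty)(m) = y(m+1)$. By the standard fact that a bounded linear operator on a dual Banach space is weak$^*$-continuous if and only if it is the adjoint of some operator on the predual, $S$ is weak$^*$-continuous with respect to $F$ precisely when $T(F) \subseteq F$. Since $T$ is invertible on $\ell_\infty(\Z)$ with $T^{-1}$ also a shift, this is in turn equivalent to $F$ being invariant under the shift on $\ell_\infty(\Z)$, which is condition (2).

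The implication $(3) \Rightarrow (1)$ is immediate, since the bilateral shift on $\ell_1(\Z)$ coincides with convolution by $\delta_1$, so weak$^*$-continuity of multiplication by $\delta_1$ is precisely (1). For $(2) \Rightarrow (3)$, the key identity is
\[
\ip{y}{a * b} = \ip{y \cdot a}{b}, \qquad (y \cdot a)(m) := \sum_{k \in \Z} a(k)\, y(m+k),
\]
valid for $y \in \ell_\infty(\Z)$ and $a, b \in \ell_1(\Z)$, with the bound $\|y \cdot a\|_\infty \leq \|y\|_\infty \|a\|_1$. Consequently, convolution by $a$ on the left is weak$^*$-continuous exactly when $y \cdot a \in F$ for every $y \in F$. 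When $a = \delta_k$, one has $y \cdot \delta_k = T^k y$, a shift of $y$, so condition (2) gives $y \cdot p \in F$ whenever $p \in \ell_1(\Z)$ has finite support. As such $p$ are norm-dense in $\ell_1(\Z)$ and $F$ is closed in $\ell_\infty(\Z)$, this extends to all $a \in \ell_1(\Z)$. Commutativity of convolution handles the other variable, yielding (3).

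No step presents a genuine obstacle; the only care required is in fixing sign and direction conventions so that the adjoint and module-action formulas are internally consistent.
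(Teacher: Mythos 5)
Your approach is correct in outline but more abstract than the paper's. For $(1)\Leftrightarrow(2)$ you invoke the characterisation of weak$^*$-continuous operators as adjoints, where the paper uses a direct Hahn--Banach contradiction for $(1)\Rightarrow(2)$ and a short direct argument for $(2)\Rightarrow(1)$. Your $(2)\Rightarrow(3)$ via the module action $\ip{y}{a*b}=\ip{y\cdot a}{b}$ and density of finitely supported elements is a nice explicit version of what the paper compresses into ``$\delta_1$ generates the commutative Banach algebra $\ell_1(\Z)$'' for $(1)\Leftrightarrow(3)$.

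There is, however, a genuine gap concerning the \emph{direction} of the shift in (2). Your adjoint argument establishes exactly that (1) is equivalent to $T(F)\subseteq F$, which is one inclusion only. The sentence ``Since $T$ is invertible on $\ell_\infty(\Z)$ with $T^{-1}$ also a shift, this is in turn equivalent to $F$ being invariant under the shift on $\ell_\infty(\Z)$'' does not supply the reverse inclusion: invertibility of $T$ on all of $\ell_\infty(\Z)$ does not force $T(F)\subseteq F$ to upgrade to $T(F)=F$ for a general closed subspace $F$. But your $(2)\Rightarrow(3)$ step then uses $y\cdot\delta_k = T^k y\in F$ for \emph{all} $k\in\Z$, i.e.\ both directions, so the argument is inconsistent about what (2) asserts. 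The missing step is standard but must be said: because $F$ is a predual and $S=(T|_F)^*$ under $\ell_1(\Z)\cong F^*$, with $S$ a bijection, the operator $T|_F$ has surjective adjoint (hence is bounded below) and injective adjoint (hence has dense range), so $T|_F$ is an isomorphism of $F$ onto itself and $T(F)=F$. Equivalently, a weak$^*$-continuous bijection of a dual space has weak$^*$-continuous inverse. Once this is noted, both your $(1)\Leftrightarrow(2)$ and $(2)\Rightarrow(3)$ steps close up. It is worth observing that the paper's proof of $(1)\Rightarrow(2)$ is careful about exactly this point: it treats $x\in F\setminus\sigma^*(F)$ and $x\in\sigma^*(F)\setminus F$ as separate cases, establishing $\sigma^*(F)=F$ rather than just one inclusion.
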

\begin{proof}
Let $\delta_1\in\ell_1(\Z)$ be the unit point mass at $1$.  Then convolution by
$\delta_1$ induces the bilateral shift on $\ell_1(\Z)$, and under the convolution
product, $\delta_1$ generates the commutative Banach algebra $\ell_1(\Z)$.
It follows that conditions (1) and (3) are equivalent.

Let $\sigma$ be the bilateral shift on $\ell_1(\Z)$, so that $\sigma^*$ is the bilateral shift (going in the other direction) on $\ell_\infty(\Z)$.  If (1) holds but (2) does not,
we can find $x\in F\setminus \sigma^*(F)$.  So $(\sigma^*)^{-1}(x)\not\in F$, and so
by Hahn-Banach, we can find $\Phi\in\ell_\infty(\Z)^*$ with $\ip{\Phi}{(\sigma^*)^{-1}(x)}=1$ and $\ip{\Phi}{y}=0$ for all $y\in F$.
Pick a bounded net $(a_\alpha)\subseteq\ell_1(\Z)$
which converges weak$^*$ to $\Phi$.  Then $\lim_\alpha\ip{y}{a_\alpha}=0$ for
$y\in F$, so that $(a_\alpha)$ is weak$^*$-null for the weak$^*$-topology given by $F$.
Hence also $(\sigma^{-1}(a_\alpha))$ is weak$^*$-null, so as $x\in F$,
\begin{equation} 0 = \lim_\alpha \ip{x}{\sigma^{-1}(a_\alpha)}
= \lim_\alpha \ip{(\sigma^*)^{-1}(x)}{a_\alpha}
= \ip{\Phi}{(\sigma^*)^{-1}(x)} = 1, \end{equation}
giving the required contradiction.
A similar argument holds if $x\in \sigma^*(F)\setminus F$.  Thus (1)
implies (2). Conversely, when (2) holds, let $(a_\alpha)$ be a weak$^*$-null net in $\ell_1(\Z)$ and let $x\in F$.  Then $\sigma^*(x)\in F$ and so $\ip{x}{\sigma(a_\alpha)}\rightarrow 0$.
So $(\sigma(a_\alpha))$ is weak$^*$-null, showing (1).
\end{proof}

As well as the convolution product, $\ell_1(\Z)$ admits a natural coproduct:
\begin{equation}
\Gamma:\ell_1(\Z)\rightarrow\ell_1(\Z\times\Z),\quad \delta_n\mapsto\delta_{(n,n)}.
\end{equation}
Given a predual $F\subset\ell_\infty(\Z)$ for $\ell_1(\Z)$, the Banach space injective tensor product $F\check\otimes F$ gives an associated predual for $\ell_1(\Z\times\Z)$ (see \cite[Proposition 3.2]{dlpw} for details) and it is natural to ask which preduals make $\Gamma$ weak$^*$-continuous.  In the same vein as the previous proposition, this can be characterised algebraically. Indeed, \cite[Lemma 3.3]{dlpw} shows that $\Gamma$ is weak$^*$-continuous with respect to $F$ if and only if $F$ is a subalgebra of $\ell_\infty(\Z)$ (with the pointwise multiplication).  Then \cite[Theorem~3.6]{dlpw} shows that if $F\subset\ell_\infty(\Z)$ is a predual making both the multiplication and comultiplication weak$^*$-continuous, then necessarily $F=c_0(\Z)$, i.e. the canonical weak$^*$-topology is the unique topology making all the natural operations suitably continuous.  In particular, given a countable compact Hausdorff space $X$, we have a natural pairing between $C(X)$ and $\ell_1(\Z)\cong M(X)=\ell_1(X)$, and following through the isomorphisms involved in exhibiting $C(X)$ as a concrete predual, we obtain a subalgebra of $\ell_\infty(\Z)$.  As such \cite{dlpw} prevents these pairings from providing new shift-invariant preduals of $\ell_1(\Z)$, though as we will see, with other pairings even $c_0(\Z)$ can be used to give many different shift-invariant preduals of $\ell_1(\Z)$.  Note too that the pairings between these $C(X)$ and $\ell_1(\Z)$ resolve the ``co-version'' of the problem under consideration (namely exhibit non-canonical preduals making the comultiplication continuous). It is a little surprising that it is much easier to make the coproduct on $\ell_1(\Z)$
weak$^*$-continuous, than it is to make the product weak$^*$-continuous.

\section{An explicit construction}\label{haydon}

In this section we give a direct construction of an uncountable family $(F_\lambda)_{|\lambda|>1}$ of ``exotic'' shift-invariant preduals of $\ell_1(\Z)$. As subspaces of $\ell_\infty(\Z)$ they are pairwise distinct, and distinct from $c_0$, so induce an uncountable family of distinct weak$^*$-topologies making $\ell_1(\Z)$ into a dual Banach algebra. 

Fix $\lambda\in\mathbb C$ with $|\lambda|>1$. For $n\geq 0$ in $\Z$, let $b(n)$ be the number of ones in the binary expansion of $n$, so $b(1)=1, b(2)=1, b(3)=2, b(4)=1$ and so forth. For $n<0$, set $b(n)=-\infty$.  Define an element $x_0\in \ell_\infty(\Z)$ by $x_0(n)=\lambda^{-b(n)}$, with the convention that $\lambda^{-\infty}=0$. Thus $x_0$ is given by
\begin{equation}
x_0=(\cdots0\ 0\ 1\ \lambda^{-1}\ \lambda^{-1}\ \lambda^{-2}\ \lambda^{-1}\ \lambda^{-2}\ \lambda^{-2}\ \lambda^{-3}\ \lambda^{-1}\ \cdots)
\end{equation}
where the $1$ occurs in the $n=0$ position of $\mathbb Z$. Let $F$ be the closed shift-invariant subspace of $\ell_\infty(\Z)$ generated by $x_0$, i.e. the closed linear span of the bilateral shifts of $x_0$.  In Theorem~\ref{thm:haydon}, we will show that these $F$ give preduals of $\ell_1(\Z)$ by demonstrating that the canonical map $\iota_F:\ell_1(\Z)\rightarrow F^*$ is a bijection. When we need to indicate the dependance on $\lambda$, we will write $F^{(\lambda)}$ and $x_0^{(\lambda)}$ respectively. 

Write $\sigma$ for the bilateral shift on $\ell_\infty(\Z)$ so that $\sigma(x)(n)=x(n-1)$ for $x\in\ell_\infty(\Z)$.  As a technical device, we introduce a bounded linear operator $\tau:\ell_\infty(\Z)\rightarrow\ell_\infty(\Z)$, defined by
\begin{equation}\label{hadyon:DefT}
\tau(x)(n)=\begin{cases}x(n/2)&n\text{ even};\\0&n\text{ odd}.\end{cases}
\end{equation}
This has the effect of spreading out $x$, for example
\begin{equation}
\tau(x_0)=(\cdots0\ 0\ 1\ 0\ \lambda^{-1}\ 0\ \lambda^{-1}\ 0\ \lambda^{-2}\ 0\ \lambda^{-1}\ 0\ \lambda^{-2}\ 0\ \lambda^{-2}\ 0\ \lambda^{-3}\ 0\ \lambda^{-1}\ \cdots).
\end{equation}
Note that
\begin{equation}\label{eq:TS=S^2T}
\tau\sigma=\sigma^2\tau.
\end{equation}
Indeed, for $n\in\Z$ even,
\begin{equation}
\tau\sigma(x)(n) = \sigma(x)(n/2) = x(n/2-1) = \tau(x)(n-2) = \sigma^2\tau(x)(n),
\end{equation}
while for $n$ odd,  both sides above are trivially zero. As $k$ tends to infinity, $\tau^k(x_0)$ behaves like $\delta_0$ as a functional on $\ell_1(\Z)$. We shall use this phenomenon to establish the injectivity of $\iota_F$ and so we begin by showing that these $\tau^k(x_0)$ lie in the subspace $F$.

\begin{lemma}\label{p:TInvariant}
With the notation above, $\tau^k(x_0)\in F$ for $k\geq 1$.
\end{lemma}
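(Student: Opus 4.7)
The plan is to extract from the definition of $x_0$ a single identity which already shows that $\tau(x_0)\in F$, and then leverage the commutation relation \eqref{eq:TS=S^2T} to promote this to $\tau^k(x_0)\in F$ for all $k\geq1$.

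The key identity is
\begin{equation}\label{planeq1}
x_0 = \tau(x_0) + \lambda^{-1}\sigma\tau(x_0) = (I+\lambda^{-1}\sigma)\tau(x_0).
\end{equation}
I would check this coordinate-wise using the recurrences $b(2m)=b(m)$ and $b(2m+1)=b(m)+1$ valid for $m\geq 0$. At an even index $n=2m$ with $m\geq0$ the right-hand side reduces to $\tau(x_0)(2m)=x_0(m)=\lambda^{-b(m)}=\lambda^{-b(2m)}=x_0(2m)$; at an odd index $n=2m+1$ with $m\geq 0$ it reduces to $\lambda^{-1}\tau(x_0)(2m)=\lambda^{-1}x_0(m)=\lambda^{-b(m)-1}=\lambda^{-b(2m+1)}=x_0(2m+1)$; and both sides vanish at negative indices because $x_0$ and $\tau(x_0)$ are supported on nonnegative integers.

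Next, since $\|\lambda^{-1}\sigma\|=|\lambda|^{-1}<1$ the operator $I+\lambda^{-1}\sigma$ is invertible on $\ell_\infty(\Z)$ via a norm-convergent Neumann series, so \eqref{planeq1} rearranges to
\begin{equation}\label{planeq2}
\tau(x_0)=\sum_{n=0}^\infty(-\lambda^{-1})^n\sigma^n(x_0).
\end{equation}
Each summand lies in $F$, the sum converges in $\ell_\infty(\Z)$, and $F$ is closed, hence $\tau(x_0)\in F$. This is the base case.

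For the inductive step, I want to show $\tau(F)\subseteq F$. Using the commutation relation $\tau\sigma=\sigma^2\tau$ from \eqref{eq:TS=S^2T}, I get $\tau(\sigma^n x_0)=\sigma^{2n}\tau(x_0)$ for every $n\in\Z$; since $F$ is shift-invariant and $\tau(x_0)\in F$, each $\tau(\sigma^n x_0)$ lies in $F$. The operator $\tau$ is bounded (in fact a contraction) on $\ell_\infty(\Z)$, so it sends the closed linear span of $\{\sigma^n x_0:n\in\Z\}$, namely $F$, into $F$. By induction, $\tau^k(x_0)=\tau\bigl(\tau^{k-1}(x_0)\bigr)\in\tau(F)\subseteq F$ for every $k\geq1$.

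The only real content is spotting identity \eqref{planeq1}; everything afterwards is a Neumann series computation plus a closed-graph-style propagation using the intertwining relation $\tau\sigma=\sigma^2\tau$. The hypothesis $|\lambda|>1$ enters only to guarantee convergence of the Neumann series.
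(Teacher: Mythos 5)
Your proof is correct and in fact streamlines the paper's argument. The paper reaches the same place by a longer route: it first establishes the series identity $(\id-\lambda^{-1}\sigma)(x_0)=(\lambda-1)\sum_{j\geq1}\lambda^{-j}\tau^j(x_0)$ via a case analysis on the binary expansion of $n$, then applies $(\id-\lambda^{-1}\tau)$ to both sides, and after a rearrangement using $\tau\sigma=\sigma^2\tau$ arrives at $(\id-\lambda^{-1}\sigma)(x_0)=(\id-\lambda^{-2}\sigma^2)\tau(x_0)$, which it solves for $\tau(x_0)$ by inverting $(\id-\lambda^{-2}\sigma^2)$. Your identity $x_0=(I+\lambda^{-1}\sigma)\tau(x_0)$ is exactly what one gets by cancelling the common invertible factor $(\id-\lambda^{-1}\sigma)$ from the paper's intermediate equation, since $(\id-\lambda^{-2}\sigma^2)=(\id-\lambda^{-1}\sigma)(\id+\lambda^{-1}\sigma)$; but you obtain it directly from the elementary recurrences $b(2m)=b(m)$ and $b(2m+1)=b(m)+1$, which is both shorter and more transparent than the paper's manipulation of the full $\tau$-power series. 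Your treatment of the inductive step is also tidier: rather than the paper's somewhat implicit ``it is now easy to see,'' you isolate the clean structural fact that $\tau$ is bounded and maps the generating set $\{\sigma^n x_0\}$ into $F$ (via $\tau\sigma^n=\sigma^{2n}\tau$), hence $\tau(F)\subseteq F$, from which $\tau^k(x_0)\in F$ follows immediately. Both proofs hinge on the same two ingredients---a linear relation between $x_0$ and $\tau(x_0)$ invertible by a Neumann series, and the intertwining $\tau\sigma=\sigma^2\tau$---but your version extracts the minimal identity and is the one I would keep.
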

\begin{proof}
We claim that
\begin{equation}\label{p:TInvariant-e1}
(\id-\lambda^{-1}\sigma)(x_0)(n) = (\lambda-1)\sum_{j=1}^\infty \lambda^{-j} \tau^j(x_0)(n),\quad n\in\Z.
 \end{equation}
For $n<0$, both sides of (\ref{p:TInvariant-e1}) evaluate to zero. At $n=0$, we have $(\id-\lambda^{-1}\sigma)(x_0)(0)=1$, while $\tau^j(x_0)(0)=1$ for all $j$ so that the righthand side of (\ref{p:TInvariant-e1}) sums to $1$.  Fix $n>0$, and write the binary
expansion of $n$ as
\begin{equation} n = \sum_{j=k}^l \varepsilon_j 2^j, \end{equation}
where $(\varepsilon_j)\subseteq\{0,1\}$ and $\varepsilon_k=1$.  It follows that
\begin{equation} n-1 = \sum_{j=0}^{k-1} 2^j + \sum_{j=k+1}^l \varepsilon_j 2^j, \end{equation}
and so $b(n-1) = b(n)-1+k$.
 Since $\tau^j(x_0)(n) = x_0(n)$ for $j\leq k$,
and $\tau^j(x_0)(n) = 0$ for $j>k$, we compute that
\begin{align} (\id-\lambda^{-1}\sigma)(x_0)(n) &= x_0(n) - \lambda^{-1}x_0(n-1)
= \lambda^{-b(n)} - \lambda^{-b(n)-k} \\
&= (1-\lambda^{-k})x_0(n)
= (\lambda-1) \sum_{j=1}^{k} \lambda^{-j}x_0(n)\\
&= (\lambda-1) \sum_{j=1}^{k} \lambda^{-j} \tau^j(x_0)(n) = (\lambda-1) \sum_{j=1}^\infty \lambda^{-j} \tau^j(x_0)(n)
\end{align}
to obtain (\ref{p:TInvariant-e1}) for $n>0$.

Applying $(\id-\lambda^{-1}\tau)$ to (\ref{p:TInvariant-e1}) yields
\begin{equation} \label{eq:one}
 \big(\id-\lambda^{-1}\tau\big)\big( \id - \lambda^{-1} \sigma \big)(x_0)
= (\lambda-1) \Big(\sum_{j=1}^\infty \lambda^{-j} \tau^j(x_0)
   - \sum_{j=2}^\infty \lambda^{-j} \tau^j(x_0) \Big) = \frac{\lambda-1}{\lambda} \tau(x_0). \end{equation}
Then, first solving (\ref{eq:one}) for $(\id-\lambda^{-1}\sigma)(x_0)$, and
then applying (\ref{eq:TS=S^2T}), gives
\begin{align} \big(\id-\lambda^{-1}\sigma\big)(x_0) &= \frac{\lambda-1}{\lambda} \tau(x_0)
   + \lambda^{-1} \tau(\id-\lambda^{-1}\sigma)(x_0)
= \tau\big(\id -\lambda^{-2} \sigma \big)(x_0) \\
&= \big(\id - \lambda^{-2}\sigma^2\big)\tau(x_0).
\end{align}
Now, $\|\lambda^{-2}\sigma^2\| = |\lambda^{-2}|<1$ and so $(\id-\lambda^{-2}\sigma^2)$ is invertible with the standard power-series expansion, and hence
\[ \tau(x_0) = \big(\id - \lambda^{-2}\sigma^2\big)^{-1}\big(\id-\lambda^{-1}\sigma\big)(x_0)
= \sum_{j=0}^\infty \lambda^{-2j} \sigma^{2j} \big(\id-\lambda^{-1}\sigma\big)(x_0). \]
In particular, $\tau(x_0)\in F$, as $F$ is shift-invariant and closed.  Using
this expression, and that $\tau\sigma = \sigma^2\tau$, it is now easy to see that $\tau^k(x_0)\in F$
for all $k\geq 1$.
\end{proof}

\begin{lemma}\label{lem:htrick}
The map $\iota_F:\ell_1(\Z)\rightarrow F^*$ is injective.
\end{lemma}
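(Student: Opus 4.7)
The plan is to exploit Lemma \ref{p:TInvariant} together with the shift-invariance of $F$ to show that point evaluations on $\ell_1(\Z)$ can be recovered as limits of pairings with elements of $F$. Since $F$ is shift-invariant and $\tau^k(x_0) \in F$ for all $k \geq 1$, every $\sigma^m \tau^k(x_0)$ lies in $F$, for $m \in \Z$ and $k \geq 1$. The crucial observation is that as $k \to \infty$, the sequence $\tau^k(x_0)$ behaves like $\delta_0$ when paired with any $a \in \ell_1(\Z)$, and hence $\sigma^m \tau^k(x_0)$ behaves like $\delta_m$.

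First I would analyse $\tau^k(x_0)$ directly from the definition \eqref{hadyon:DefT}: the support of $\tau^k(x_0)$ is contained in $2^k \Z$, with $\tau^k(x_0)(2^k n) = x_0(n)$. In particular, $\tau^k(x_0)(0) = 1$ for every $k$, and for any fixed $n \neq 0$ we have $\tau^k(x_0)(n) = 0$ as soon as $2^k > |n|$. Moreover, since $|\lambda|>1$, $|x_0(n)| \leq 1$ for all $n$, so $\|\tau^k(x_0)\|_\infty \leq 1$ uniformly in $k$. It follows that $\tau^k(x_0) \to \delta_0$ pointwise and with a uniform sup-norm bound.

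Next I would use this to show that for every $a \in \ell_1(\Z)$ and every $m \in \Z$,
\begin{equation}
\lim_{k\to\infty} \langle \sigma^m\tau^k(x_0), a\rangle = a(m).
\end{equation}
This is a routine dominated convergence argument: $\sigma^m\tau^k(x_0)(n) = \tau^k(x_0)(n-m)$ converges to $\delta_m(n)$ pointwise and is dominated by the constant function $1$, so the pairing with the $\ell_1$ function $a$ passes to the limit.

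Finally, suppose $a \in \ker \iota_F$, i.e. $\langle x, a \rangle = 0$ for every $x \in F$. Applying this to $\sigma^m\tau^k(x_0) \in F$ and letting $k \to \infty$ gives $a(m) = 0$ for every $m \in \Z$, hence $a = 0$. This proves injectivity. There is no real obstacle here; the work has essentially been done in Lemma \ref{p:TInvariant}, and what remains is to package the ``delta-like'' behaviour of $\tau^k(x_0)$ into a limit argument.
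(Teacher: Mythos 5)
Your proof is correct and uses essentially the same approach as the paper: exploit the fact that $\tau^k(x_0)\in F$ and that $\tau^k(x_0)\to\delta_0$ in the appropriate sense, then use shift-invariance to recover all coordinates of $a$. The only cosmetic difference is that you make the dominated-convergence step and the shifted functionals $\sigma^m\tau^k(x_0)$ explicit, whereas the paper treats the $m=0$ case and then invokes shift-invariance.
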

\begin{proof}
Let $k\in\mathbb N$, so that $\tau^k(x_0)(n)= 0$ if $|n|<2^k$ and $n\not=0$,
while $\tau^k(x_0)(0) = 1$.
Thus, for $a=(a_n)\in\ell_1(\Z)$, we see that $a_0 = \lim_{k\rightarrow\infty}
\ip{\tau^k(x_0)}{a}$.  It follows that, if $\iota_F(a)=0$, then $a_0=0$.  By shift
invariance, we see that if $\iota_F(a)=0$, then $a_n=0$ for all $n\in\Z$, that is, $a=0$,
and so $\iota_F$ is injective.
\end{proof}

We now turn to the surjectivity of $\iota_F$. For this we utilise the Stone-Cech compactification $\BZ$ of $\Z$. We regard $\BZ$ as the space of ultrafilters on $\Z$ and write $\npU = \BZ\setminus\Z$ for the non-principal ultrafilters on $\Z$.
The topology on $\BZ$ has basis
\begin{equation} 
\mathcal O_A = \{ \Uf\in\BZ : A\in\Uf \}, \quad A\subseteq\Z. 
\end{equation}
and, as $\BZ\setminus\mathcal O_A = \mathcal O_{\Z\setminus A}$, these sets are also closed in $\BZ$.  We make the canonical identification of $\ell_\infty(\Z)$ with $C(\BZ)$ by extending elements $x\in\ell_\infty(\Z)$ to $\BZ$ by setting $x(\Uf)=\displaystyle{\lim_{n\rightarrow\Uf}x(n)}$.

For $t\in\Z$ define
\begin{equation}
X^{(1)}_t=\{ \Uf\in\Z^* : \forall m>0,\ \{2^n+t:n>m\}\in\Uf\} .
 \end{equation}
As non-principal ultrafilters cannot contain a finite set, it follows that any non-principal ultrafilter containing $\{2^n+t:n>0\}$ must lie in $X^{(1)}_t$. For $k>1$ and $t\in\Z$, define
\begin{equation}
 X^{(k)}_t = \big\{\Uf\in\Z^* : \forall m>0,\ \{2^{n_1}+\cdots+2^{n_k}+t
: m<n_1<n_2<\cdots<n_k\}\in\Uf \big\}. 
\end{equation}
Each $X^{(k)}_t$ is the intersection of sets of the form $\mathcal O_A\cap\Z^*$ and so these sets are closed.  Write $X^{(\infty)}$ for the complement of $\bigcup_{t,k} X^{(k)}_t$ in $\Z^*$.

\begin{lemma}\label{lem:two}
With the notation above, $\Z^*$ is the
disjoint union of $X^{(\infty)}$ and the sets $X^{(k)}_t$.
\end{lemma}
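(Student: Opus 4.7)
The equality $\Z^* = X^{(\infty)} \cup \bigcup_{k\geq1,\,t\in\Z} X^{(k)}_t$ is immediate from the definition of $X^{(\infty)}$, and this same definition makes $X^{(\infty)}$ disjoint from every $X^{(k)}_t$. The real content of the lemma is therefore the pairwise disjointness of the sets $X^{(k)}_t$ for distinct pairs $(k,t)$. I would argue this by contradiction: suppose $\Uf \in X^{(k)}_t \cap X^{(k')}_{t'}$ with $(k,t)\neq(k',t')$, and derive a forced equality of the indices.

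For each $m>0$, the defining sets
\[
A_m = \{2^{n_1}+\cdots+2^{n_k}+t : m<n_1<\cdots<n_k\}, \qquad
A'_m = \{2^{n'_1}+\cdots+2^{n'_{k'}}+t' : m<n'_1<\cdots<n'_{k'}\}
\]
both lie in $\Uf$, so the filter property gives $A_m\cap A'_m\in\Uf$, hence non-empty. Any witness $N\in A_m\cap A'_m$ produces increasing tuples $n_1<\cdots<n_k$ and $n'_1<\cdots<n'_{k'}$, all strictly greater than $m$, with
\[
2^{n_1}+\cdots+2^{n_k}+t\;=\;2^{n'_1}+\cdots+2^{n'_{k'}}+t'.
\]

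The argument concludes with a divisibility-plus-uniqueness step. Since all exponents exceed $m$, both $N-t$ and $N-t'$ are divisible by $2^{m+1}$, so $2^{m+1}$ divides $t'-t$. Taking $m$ large enough that $2^{m+1}>|t'-t|$ forces $t=t'$, whereupon the identity collapses to $2^{n_1}+\cdots+2^{n_k}=2^{n'_1}+\cdots+2^{n'_{k'}}$; uniqueness of binary representation (both sides being sums of distinct powers of two in strictly increasing order) yields $k=k'$ and in fact $n_i=n'_i$, contradicting $(k,t)\neq(k',t')$. I do not anticipate any serious obstacle: the proof uses only the finite-intersection property of an ultrafilter together with uniqueness of binary expansions, and makes no sign assumption on $t$ or $t'$, so the case of negative indices is handled without modification.
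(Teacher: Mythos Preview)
Your proof is correct and follows the same overall strategy as the paper: intersect the defining sets in the ultrafilter, choose the parameter $m$ large relative to the difference of the translates, and exploit the arithmetic of sums of distinct powers of two. The execution of the final step differs slightly. The paper keeps the translate on one side and uses a size estimate ($2^{m_l} < 2^{n_k+1}$, hence $m_l\le n_k$, and symmetrically) to match the top exponents, then cancels and repeats inductively until both tuples coincide and the translate is forced to zero. You instead observe that both $N-t$ and $N-t'$ are divisible by $2^{m+1}$, which kills the translate immediately and reduces the problem to bare uniqueness of binary expansion. Your route is a touch more direct, avoiding the inductive peeling; the paper's version has the minor advantage of never appealing to binary uniqueness as a black box but effectively re-proving the needed instance. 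Either way the argument is short and the difference is cosmetic.
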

\begin{proof}
Suppose that $X^{(k)}_s \cap X^{(l)}_t$ is non-empty, and fix $\Uf$ in the
intersection.  This means that for all $n,m>0$,
\begin{equation}
 \{2^{n_1}+\cdots+2^{n_k}+s : n<n_1<\cdots<n_k\} \cap
\{2^{m_1}+\cdots+2^{m_l}+t : m<m_1<\cdots<m_l\}  
\end{equation}
is an element of $\Uf$.
Choose $n=m$ so that $2^n>|s-t|$.
Suppose that for some $n<n_1<\cdots<n_k$ and $m<m_1<\cdots<m_l$, we have
\begin{equation}\label{lem:two-e1}
 2^{n_1}+\cdots+2^{n_k} + (s-t)= 2^{m_1}+\cdots+2^{m_l}.
\end{equation}
Now, $\sum_{j=1}^{l-1} 2^{m_j} \geq 2^{m_1} > 2^m$, and so
\begin{equation}
 2^{m_l} = \sum_{i=1}^k 2^{n_i} + (s-t) - \sum_{j=1}^{l-1} 2^{m_j}
< 2^{n_k+1} + |s-t| - 2^m<2^{n_k+1},
\end{equation}
which implies that $m_l \leq n_k$.  By symmetry, $m_l=n_k$.  We can then cancel $n_k$ and $m_l$ from (\ref{lem:two-e1}) and argue in the same way to see that $k=l$ and that $m_i=n_i$ for all $i$.  Thus also $s=t$.
\end{proof}

We can now complete the proof that the $F^{(\lambda)}$ provide shift-invariant preduals of $\ell_1(\Z)$.  The remaining step is to show that the map $\iota_{F^{(\lambda)}}$ is
surjective.  Our calculations also give rise to an intrinsic characterisation of the elements of $F^{(\lambda)}$.

\begin{theorem}\label{thm:haydon}
$F^{(\lambda)}$ is a shift-invariant predual of $\ell_1(\Z)$, and $F^{(\lambda)}$ consists of those $x\in \ell_\infty(\Z)$ which, under the canonical identification of $\ell_\infty(\Z)$ with $C(\BZ)$, satisfy
\begin{equation}\label{thm:haydon-e1}
x(\Uf)=\begin{cases}\lambda^{-k}x(t),&\Uf\in X^{(k)}_t, k\geq 1, t\in\Z;\\0,&\Uf\in X^{(\infty)}.\end{cases}
\end{equation}
\end{theorem}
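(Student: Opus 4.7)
Let $G \subseteq \ell_\infty(\Z)$ denote the set of $x$ whose continuous extension to $\BZ$ satisfies (\ref{thm:haydon-e1}). Shift-invariance of $F^{(\lambda)}$ is built into its definition, so the content of the theorem is that $\iota_{F^{(\lambda)}}$ is surjective and $F^{(\lambda)}=G$. My plan is to handle both at once in three steps: first verify $F^{(\lambda)}\subseteq G$; second, establish surjectivity of $\iota_G$ (which, by restriction, also yields surjectivity of $\iota_{F^{(\lambda)}}$) via a measure-theoretic decomposition; third, conclude $F^{(\lambda)}=G$ from the paper's observation after Lemma~\ref{concreteequal} that concrete preduals $F_1\subseteq F_2\subseteq\ell_\infty(\Z)$ must coincide.

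For $F^{(\lambda)}\subseteq G$, $G$ is norm-closed (cut out by the continuous linear conditions in (\ref{thm:haydon-e1})) and shift-invariant (the induced action on $\BZ$ sends $X^{(k)}_t$ to $X^{(k)}_{t-1}$ and, by Lemma~\ref{lem:two}, preserves $X^{(\infty)}$), so it suffices to verify $x_0\in G$. I handle three cases. If $\Uf\in X^{(k)}_t$ with $t\geq 0$, then once $m$ is large enough that $2^m>t$, the binary expansions of $t$ and of $\sum_{i=1}^k 2^{n_i}$ for $m<n_1<\cdots<n_k$ are disjoint, giving $b(2^{n_1}+\cdots+2^{n_k}+t)=b(t)+k$ and hence $x_0(\Uf)=\lambda^{-k}x_0(t)$. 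If $\Uf\in X^{(k)}_t$ with $t<0$, iterating $2^{n_1}-|t|=2^{n_1-1}+(2^{n_1-1}-|t|)$ down to where $2^L>|t|$ shows $b(2^{n_1}-|t|)\geq n_1-L$ (the intermediate bits at positions $L,\ldots,n_1-1$ all being $1$), so $b(2^{n_1}+\cdots+2^{n_k}+t)\to\infty$ and $x_0(\Uf)=0=\lambda^{-k}x_0(t)$. If $\Uf\in X^{(\infty)}$, I argue by contraposition: $x_0(\Uf)\neq 0$ forces $\{n:b(n)\leq K\}\in\Uf$ for some $K$, hence by pigeonhole $\{n:b(n)=j\}\in\Uf$ for some $j\geq 1$; pigeonholing through the exponents $n_1(n)<\cdots<n_j(n)$ in the binary expansion of $n$ (with the top exponent necessarily unbounded along $\Uf$, else a finite set would lie in $\Uf$) yields $\Uf\in X^{(k)}_t$ for some $k\geq 1$, $t\in\Z$, contradicting $\Uf\in X^{(\infty)}$.

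The heart of the proof is surjectivity of $\iota_G$. Given $\Phi\in G^*$, extend via Hahn-Banach to $\mu\in\ell_\infty(\Z)^*\cong M(\BZ)$. Using the disjoint partition from Lemma~\ref{lem:two}, decompose $\mu=\mu_\Z+\mu|_{X^{(\infty)}}+\sum_{k,t}\mu|_{X^{(k)}_t}$, where $\mu_\Z$ is the discrete part on $\Z$. Set
\[
a=\mu_\Z+\sum_{k\geq 1,\,t\in\Z}\lambda^{-k}\mu(X^{(k)}_t)\,\delta_t.
\]
Disjointness of the $X^{(k)}_t$ in $\npU$ gives $\sum_{k,t}|\mu(X^{(k)}_t)|\leq|\mu|(\npU)\leq\|\mu\|$, and combined with $|\lambda^{-k}|\leq|\lambda|^{-1}<1$ the series converges absolutely in $\ell_1(\Z)$. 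For any $x\in G$, $\int x\,d\mu|_{X^{(\infty)}}=0$ because $x$ vanishes on $X^{(\infty)}$, while $\int x\,d\mu|_{X^{(k)}_t}=\lambda^{-k}x(t)\mu(X^{(k)}_t)$ because $x$ is constantly $\lambda^{-k}x(t)$ on $X^{(k)}_t$; summing gives $\int x\,d\mu=\ip{x}{a}$, i.e.\ $\iota_G(a)=\Phi$. Restricting the same calculation to $x\in F^{(\lambda)}\subseteq G$ shows $\iota_{F^{(\lambda)}}(a)=\Phi|_{F^{(\lambda)}}$, so $\iota_{F^{(\lambda)}}$ is also surjective.

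Injectivity of $\iota_{F^{(\lambda)}}$ is Lemma~\ref{lem:htrick}, and it transfers to $\iota_G$ via $F^{(\lambda)}\subseteq G$, so both $F^{(\lambda)}$ and $G$ are concrete preduals of $\ell_1(\Z)$; the observation following Lemma~\ref{concreteequal} then forces $F^{(\lambda)}=G$. I expect the main obstacle to be the ultrafilter/pigeonhole analysis for $\Uf\in X^{(\infty)}$ in Step 1, where one must carefully read off which binary exponents are bounded along $\Uf$ and thereby exhibit $\Uf$ in some $X^{(k)}_t$; by contrast, Step 2, though the conceptual heart of the argument, is essentially bookkeeping once the disjoint partition from Lemma~\ref{lem:two} is in hand.
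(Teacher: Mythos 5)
Your proposal is correct and follows the paper's own three-step strategy: show $x_0\in G$ (so $F^{(\lambda)}\subseteq G$), prove surjectivity of $\iota_G$ by extending a functional via Hahn--Banach and exploiting the disjoint partition from Lemma~\ref{lem:two} together with countable additivity, and finally conclude $F^{(\lambda)}=G$ because a proper inclusion of concrete preduals is impossible. The only (cosmetic) difference is in verifying $x_0\in G$ over the $X^{(k)}_t$: you compute $b(2^{n_1}+\cdots+2^{n_k}+t)$ directly from the binary expansion for general $k$, whereas the paper treats $k=1$ explicitly and then iterates limits; your pigeonhole analysis on $X^{(\infty)}$ likewise matches the paper's in substance.
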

\begin{proof}
Let us write $G$ for the closed subspace of $\ell_\infty(\Z)\cong C(\BZ)$ given by the conditions in (\ref{thm:haydon-e1}) and note that $G$ is shift-invariant.
For an ultrafilter $\Uf\in\beta\Z$ and $s\in\Z$, we write $\Uf+s = \{ A+s:A\in\Uf\}$
and note that $\Uf+s\in\Z^*$ if and only if $\Uf\in\Z^*$, and that for some
$t\in\Z$ and $k\in\mathbb N$, we have $\Uf\in X^{(k)}_t$ if and only if $\Uf+s
\in X^{(k)}_{t+s}$.
We first show that $x_0\in G$ so that $F\subseteq G$. For $t\geq 0$ and $n$ sufficiently large, $b(2^n+t)=b(t)+1$ so that 
\begin{equation}
\lim_{n\rightarrow\infty}x_0(2^n+t)=\lambda^{-1}x_0(t).
\end{equation}
Let $t<0$, and write $-t = \sum_{j=0}^k \varepsilon_j 2^j$,
with $(\varepsilon_j)\subseteq\{0,1\}$.  For each $j$, let $\varepsilon'_j =
1-\varepsilon_j$, so that $\sum_{j=0}^k \varepsilon_j 2^j + \sum_{j=0}^k
\varepsilon'_j 2^j = 2^{k+1}-1$, and hence for $n>k+1$,
\begin{equation} 2^n + t
= 2^n - 2^{k+1} + \Big( 2^{k+1} - \sum_{j=0}^k \varepsilon_j 2^j \Big)
= 1 + \sum_{j=k+1}^{n-1} 2^j + \sum_{j=0}^{k} \varepsilon_j' 2^j. \end{equation}
Notice that as not every $\varepsilon_j=0$, there is some $j$ with $\varepsilon'_j=0$.
This ensures that $1+\sum_j \varepsilon'_j 2^j \leq 2^{k+1}-1$, and hence
$b(2^n+t) \geq n-k$,  which gives
\begin{equation} \lim_{n\rightarrow\infty} x_0(2^n+t)
= \lim_{n\rightarrow\infty} \lambda^{-b(2^n+t)} = 0 = x_0(t).
\end{equation}
It follows that $x_0(\Uf)=\lambda^{-1}x_0(t)$ for $\Uf\in X^{(1)}_t$ and $t\in\Z$.   Applying these limits twice gives
\begin{equation}
\lim_{n_1\rightarrow\infty}\lim_{n_2\rightarrow\infty}x_0(t+2^{n_1}+2^{n_2})=\lim_{n_1\rightarrow\infty}\lambda^{-1}x_0(t+2^{n_1})=\lambda^{-2}x_0(t),\quad t\in\Z,
\end{equation}
so that $x_0(\Uf)=\lambda^{-2}x_0(t)$ for $\Uf\in X^{(2)}_t$.  For $k>2$ and $t\in\Z$, arguing in this fashion with $k$-iterated limits shows that $x_0(\Uf)=\lambda^{-k}x_0(t)$ for $\Uf\in X^{(k)}_t$.

We complete the proof that $x_0\in G$ by checking that $x_0(\Uf)=0$ for $\Uf\in X^{(\infty)}$.
If this is not the case, then there exists $\Uf\in X^{(\infty)}$ with $x_0(\Uf)\not=0$.
As $x_0$ takes the values $\{0\} \cup \{\lambda^{-k} : k\geq0\}$ on $\Z$, and this
set has only $0$ as a limit point, it follows that $x_0(\Uf)=\lambda^{-k}$ for some
$k\geq 0$.  As such 
\begin{equation}\label{thm:haydon-e2}
\{ 2^{n_1} + \cdots + 2^{n_k} : n_1 < n_2 < \cdots < n_k \}
=\{ n>0 : b(n)=k \}=\{ n\in\Z : x_0(n) = \lambda^{-k} \}\in\Uf
\end{equation}
As $\Uf\not\in X^{(k)}_0$, there exists $m_1>0$ such that
\begin{equation}\label{thm:haydon-e3}
 \Z\setminus\{ 2^{n_1} + \cdots + 2^{n_k} : m_1<n_1 < n_2 < \cdots < n_k \} \in\Uf. 
 \end{equation}
Intersecting the sets in (\ref{thm:haydon-e2}) and (\ref{thm:haydon-e3}) gives
\begin{equation}
\{ 2^{n_1} + \cdots + 2^{n_k} : n_1 < n_2 < \cdots < n_k, n_1\leq m_1 \} \in\Uf.
\end{equation}
As $\Uf$ is an ultrafilter, there exists a fixed $l_1\in\{1,2,\cdots,m_1\}$ such that
\begin{equation} \{ 2^{l_1} + 2^{n_2} + \cdots + 2^{n_k} : l_1 < n_2 < \cdots < n_k \} \in\Uf. 
\end{equation}
As $\Uf\not\in X^{(k-1)}_{2^{l_1}}$, there exists $m_2>0$ such that
\begin{equation}
\{ 2^{l_1} + 2^{n_2} + \cdots + 2^{n_k} : l_1 < n_2 < \cdots < n_k, n_2\leq m_2 \} \in\Uf.
\end{equation}
We then fix $l_2$, and argue the same way, to eventually conclude that we can find
fixed $l_1<l_2<\cdots<l_{k-1}$ with
\begin{equation}
 \{ 2^{l_1} + 2^{l_2} + \cdots + 2^{l_{k-1}}+2^{n_k} : l_{k-1}< n_k \} \in\Uf. 
\end{equation}
However, this shows that $\Uf \in X^{(1)}_t$ for $t=2^{l_1}+\cdots+2^{l_{k-1}}$, a
contradiction.  Therefore $x_0\in G$ and so $F\subseteq G$.

Since $F\subseteq G$, the canonical map $\iota_F$ is the composition of $\iota_G$ followed by the restriction map from $G^*$ onto $F^*$.  By Lemma~\ref{lem:htrick}, $\iota_F$ is injective and hence so too is $\iota_G$.  We now turn to surjectivity.  Given $\mu\in G^*$, extend $\mu$ via the Hahn-Banach theorem to a element of $M(\BZ)=C(\BZ)^*$.  Lemma~\ref{lem:two} ensures that the sets $X^{(\infty)}$, $(X^{(k)}_t)_{t\in\Z,\ k>0}$ are pairwise disjoint. Therefore, for $x\in G$, we can apply countable additivity and the defining identity (\ref{thm:haydon-e1}) to obtain
\begin{align} \ip{\mu}{x} &= \int_{\beta\Z} x \ d\mu
= \int_{X^{(\infty)}} x \ d\mu + \sum_{t\in\Z} \Big( x(t)\mu(\{t\})
   + \sum_{k=1}^\infty \int_{X^{(k)}_t} x \ d\mu \Big) \\
&= \sum_{t\in\Z} x(t) \Big( \mu(\{t\}) + \sum_{k=1}^\infty \lambda^{-k} \mu(X^{(k)}_t) \Big).
\end{align}
Thus $\ip{\mu}{x}=\ip{x}{a}$ for each $x\in G$, where $a=(a_t)\in\ell_1(\Z)$ is defined by
\begin{equation} a_t = \mu(\{t\}) + \sum_{k=1}^\infty \lambda^{-k} \mu(X^{(k)}_t),
\quad t\in\Z.
\end{equation}
As such $\iota_G$, and hence $\iota_F$, is surjective. By the Open Mapping Theorem, both $\iota_F$ and $\iota_G$ are isomorphisms. Hence both $F$ and $G$ are preduals and $F=G$.
\end{proof}

Since concrete preduals $E_1,E_2\subset\ell_\infty(\Z)$ for $\ell_1(\Z)$ induce the same weak$^*$-topology if and only if $E_1=E_2$, it immediately follows that the family $F^{(\lambda)}$ provide uncountably many distinct weak$^*$-topologies turning $\ell_1(\Z)$ into a dual Banach algebra.

\begin{corollary}
The family $(F^{(\lambda)})_{|\lambda|>1}$ induces a continuum of distinct weak$^*$-topologies on $\ell_1(\Z)$ making the multiplication separately weak$^*$-continuous.  Each of these topologies is distinct from the topology induced by the canonical pairing of $c_0$ with $\ell_1(\Z)$.
\end{corollary}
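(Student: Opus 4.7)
The plan is to combine the intrinsic characterization of $F^{(\lambda)}$ from Theorem~\ref{thm:haydon} with Lemma~\ref{concreteequal}, which reduces the question of distinct weak$^*$-topologies to the question of distinct subspaces of $\ell_\infty(\Z)$. The fact that each topology makes multiplication separately weak$^*$-continuous is immediate from Theorem~\ref{thm:haydon} (which asserts shift-invariance) together with Proposition~\ref{equivalentforms}.

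First I would produce a concrete test point in $X^{(1)}_0$. Since the set $\{2^n : n > 0\}$ is infinite, by the standard fact that any infinite subset of $\Z$ is contained in some non-principal ultrafilter, there exists $\Uf \in \Z^*$ with $\{2^n : n > m\} \in \Uf$ for all $m > 0$, so $\Uf \in X^{(1)}_0$.

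Next, suppose $|\lambda|,|\mu| > 1$ with $F^{(\lambda)} = F^{(\mu)}$. Then $x_0^{(\lambda)} \in F^{(\mu)}$, and applying the characterization \eqref{thm:haydon-e1} for $F^{(\mu)}$ to the element $x_0^{(\lambda)}$ at the ultrafilter $\Uf \in X^{(1)}_0$ yields $x_0^{(\lambda)}(\Uf) = \mu^{-1} x_0^{(\lambda)}(0) = \mu^{-1}$. On the other hand, applying the characterization for $F^{(\lambda)}$ to $x_0^{(\lambda)}$ at the same $\Uf$ gives $x_0^{(\lambda)}(\Uf) = \lambda^{-1}$. Hence $\lambda = \mu$, so the $F^{(\lambda)}$ are pairwise distinct as subspaces of $\ell_\infty(\Z)$, and by Lemma~\ref{concreteequal} they induce a continuum of distinct weak$^*$-topologies.

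Finally, every element of $c_0(\Z)$, regarded as a continuous function on $\BZ$, vanishes on all of $\Z^*$, while $x_0^{(\lambda)}(\Uf) = \lambda^{-1} \neq 0$ for the $\Uf \in X^{(1)}_0$ above. Thus $x_0^{(\lambda)} \in F^{(\lambda)} \setminus c_0(\Z)$, so none of the $F^{(\lambda)}$ coincides with $c_0(\Z)$, and again by Lemma~\ref{concreteequal} each induces a topology different from the canonical one. There is no real obstacle here: all the heavy lifting is already done in Theorem~\ref{thm:haydon}, and the corollary amounts to reading off the values of $x_0^{(\lambda)}$ on a suitably chosen non-principal ultrafilter.
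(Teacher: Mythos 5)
Your proof is correct and follows essentially the same route as the paper, which also reduces the claim to noting that $x_0^{(\lambda)}\notin c_0$ and that $x_0^{(\lambda)}\in F^{(\mu)}$ forces $\lambda=\mu$ via the characterisation \eqref{thm:haydon-e1}; you have simply spelled out the ultrafilter computation that the paper leaves implicit.
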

\begin{proof}
This amounts to noting that $x_0^{(\lambda)}\not\in c_0$ for all $\lambda$, which is immediate, and that $x_0^{(\lambda)}\in F^{(\mu)}$ if and only if $\lambda=\mu$, a consequence of the characterisation of $F^{(\mu)}$ in (\ref{thm:haydon-e1}).
\end{proof}

Next we examine the preduals $F^{(\lambda)}$ as Banach spaces; while they give different weak$^*$-topologies from the canonical predual, it turns out that, purely as a Banach space, these preduals are all isomorphic to $c_0$. We begin with a pleasing form of the principle of local reflexivity which enables us to extend a finite sequence to an element of $F$ which behaves well outside the initial sequence.

\begin{lemma}\label{haydon:plr}
Let $y\in\ell_\infty(\Z)$ be a finitely supported sequence with support $I\subset\Z$ (that is, $I$ is a finite subset of $\Z$ and $y(t)=0$ for $t\not\in I$). Then there exists $x\in F$ with $x(t)=y(t)$ for $t\in I$ and $|x(t)|\leq\lambda^{-1}\|y\|_\infty$ for $t\not\in I$.
\end{lemma}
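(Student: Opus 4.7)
The plan is to construct $x$ explicitly as a finite linear combination of shifts of the ``spread out'' elements $\tau^k(x_0)$ from Lemma~\ref{p:TInvariant}, choosing the spreading parameter $k$ large enough to separate the points of $I$ modulo $2^k$.

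First I would record the key pointwise information about $\tau^k(x_0)$ that is implicit in the definition~(\ref{hadyon:DefT}) and the proof of Lemma~\ref{lem:htrick}: it evaluates to $1$ at $n=0$, vanishes on every nonzero $n$ that is either negative, not divisible by $2^k$, or of absolute value less than $2^k$, and elsewhere (i.e.\ on $n = 2^k m$ with $m > 0$) equals $\lambda^{-b(m)}$, so in particular has modulus at most $\lambda^{-1}$ off the single peak at $0$. Then I would fix $k$ with $2^k > \max\{|s-t| : s,t \in I\}$ and, for each $t \in I$, set $e_t = \sigma^t(\tau^k(x_0))$. By Lemma~\ref{p:TInvariant} each $\tau^k(x_0)$ lies in $F$, and since $F$ is shift-invariant each $e_t$ lies in $F$ too.

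Next I would define the candidate
\begin{equation}
x = \sum_{t \in I} y(t)\, e_t,
\end{equation}
which belongs to $F$ as a finite linear combination. To verify the interpolation property, for $s \in I$ I would compute $x(s) = \sum_{t \in I} y(t)\,\tau^k(x_0)(s-t)$; when $t \neq s$ one has $0 < |s-t| < 2^k$, so $\tau^k(x_0)(s-t) = 0$, and the sum collapses to $y(s) \cdot \tau^k(x_0)(0) = y(s)$. For $r \notin I$, the only terms that can contribute to $x(r)$ come from those $t \in I$ with $r - t > 0$ and $2^k \mid r-t$; because any two distinct elements of $I$ differ by less than $2^k$, at most one such $t$ exists. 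That single term has modulus $|y(t)|\cdot \lambda^{-b((r-t)/2^k)} \le \|y\|_\infty\,\lambda^{-1}$, giving the desired off-support bound.

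There is no real obstacle beyond bookkeeping; the essential point is the observation that making $2^k$ exceed the diameter of $I$ forces the supports of the $e_t$ to be disjoint as $t$ ranges over $I$, so the peaks do not interfere and no summing-up of $|I|$ many $\lambda^{-1}$-size terms occurs. The mildly delicate part, which I would be careful to state explicitly, is the case analysis showing $\tau^k(x_0)(s-t) = 0$ whenever $s,t$ are distinct elements of $I$, since both the sign of $s-t$ and its size relative to $2^k$ matter.
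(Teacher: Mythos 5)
Your proof is correct and takes essentially the same approach as the paper: both build $x$ as a finite linear combination of shifts of the spread-out element $\tau^k(x_0)$ with $k$ chosen large enough that the supports do not interfere. The only difference is cosmetic bookkeeping — the paper first translates so that $I\subset[1,2^k]$ and then sums $\sum_{n=1}^{2^k}y(n)\sigma^n\tau^k(x_0)$, while you keep $I$ in place and pick $2^k$ larger than the diameter of $I$; the interpolation and off-support estimates are then the same computation.
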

\begin{proof}
Since $F$ is shift-invariant, we can shift $y$ and assume that $I$ lies in some interval $[1,2^k]\cap\Z$ for some $k\in\mathbb N$.  Then define
\begin{equation}
x=\sum_{n=1}^{2^k}y(n)\sigma^n\tau^k(x_0),
\end{equation}
where $\sigma$ is the bilateral shift and $\tau$ the operator defined in (\ref{hadyon:DefT}).  Lemma~\ref{p:TInvariant} shows that $\tau^k(x_0)\in F$,
and so $x\in F$.  For $s,r\in\Z$ with $1 \leq r\leq 2^k$, we have
\begin{equation}
 x(2^ks+r) =\sum_{n=1}^{2^k}y(n)\tau^k(x_0)(2^ks+r-n).
\end{equation}
The terms in this sum are zero unless $2^ks+r-n$ is divisible by $2^k$, so only the $n=r$ term contributes.  Therefore $x(2^ks+r)=y(r)x_0(s)$.  As $x_0(0)=1$, we can take $s=0$ to obtain that $x(r)=y(r)$ for $1\leq r\leq 2^k$, so $x$ extends $y$.  When $s\neq 0$, we have
\begin{equation}
|x(2^ks+r)|=|y(r)||x_0(s)|\leq \lambda^{-1}\|y\|_\infty
\end{equation}
as $|x(s)|\leq\lambda^{-1}$ for $s\neq 0$.
\end{proof}

\begin{remark}\label{rem:3.7}
The previous lemma also shows that the preduals $F^{(\lambda)}$ are isometric preduals of $\ell_1(\Z)$, in that the canonical map $\iota_F$ is an isometry.  Indeed, given $a\in \ell_1(\Z)$, we estimate
\begin{equation}
\|\iota_F(a)\|=\sup_{\substack{y\in F\\ \|y\|\leq 1}}|\ip{a}{y}|\geq\sup_{\substack{x\in c_0(\Z)\\\|x\|\leq 1}}|\ip{a}{x}|=\|a\|_{\ell_1(\Z)}.
\end{equation}
where the inequality is established by choosing a finitely supported element $x$ which approximates the second supremum and using the previous lemma to produce a suitable $y$.  Since $\|\iota_F\|\leq 1$, it follows that $\iota_F$ is isometric.
\end{remark} 

Let $K$ be a compact Hausdorff space.  Recall that a closed subspace $X$ of $C(K)$ is called a \emph{$G$ space} if there is an index set $\Lambda$, and for each $\alpha\in\Lambda$, there are
$x_\alpha,y_\alpha\in K$ and $\lambda_\alpha$ such that $X=\{f\in C(K) :
f(x_\alpha) = \lambda_\alpha f(y_\alpha) \}$.  In \cite{ben}, Benyamini proved 
that every separable $G$ space is isomorphic to a space of the form $C(L)$ for some compact Hausdorff space $L$. As noted at the end of \cite{ben}, this result holds for both real and complex scalars.  The characterisation of our preduals $F^{(\lambda)}$ given in Theorem \ref{thm:haydon} show that these preduals are $G$-spaces, so Benyamini's result shows that each $F^{(\lambda)}$ is isomorphic, purely as a Banach space, to some $C(L)$ space.   To compute which space $L$ occurs, we shall use the Szlenk index, which classifies the isomorphism classes of $C(L)$ spaces.

The Szlenk index was introduced in \cite{szlenk}.  There are a number of equivalent
definitions of the Szlenk index, but we shall follow Rosenthal's survey article
\cite{ros}, as this also gives a self-contained treatment of the Szlenk index of
$C(K)$ spaces.  For a separable Banach space $E$ which contains no isomorphic copy
of $\ell^1$, it is shown in \cite[Proposition~2.17]{ros} that the definition we give
below, and Szlenk's original definition, give the same index.  Notice that if
$E$ is a predual of $\ell_1(\Z)$, then these conditions do apply to $E$.

Fix $\varepsilon>0$ and set $P_0(\varepsilon)=\{\mu\in E^* : \|\mu\|\leq 1\}$.
For a countable ordinal $\alpha$, supposing we have defined $P_{\beta}$ for
$\beta\leq\alpha$, we define $P_{\alpha+1}(\varepsilon)$ to be the weak$^*$-closure of
\begin{equation}
\Big\{ \mu\in P_\alpha(\varepsilon) : \exists(\mu_n)\subseteq
P_\alpha(\varepsilon) \text{ with } \mu_n\rightarrow\mu
\text{ weak$^*$, and } \|\mu_n-\mu\|\geq\varepsilon, \ n\in\mathbb N \Big\}.
\end{equation}
Note that here we only consider sequences $(\mu_n)$, and not nets.
If $\alpha$ is a limit ordinal, then we set $P_\alpha(\varepsilon)
= \bigcap_{\beta<\alpha} P_\beta(\varepsilon)$.  Then define
\begin{equation}
 \eta(\varepsilon,E) = \sup\{ \alpha : P_\alpha(\varepsilon)\not=\emptyset \} 
 \end{equation}
if this exists, or set $\eta(\varepsilon,E)=\omega_1$, the first uncountable ordinal, otherwise.  Finally, the Szlenk index of $E$ is defined as $\eta(E) = \sup_{\varepsilon>0} \eta(\varepsilon,E)$.  
 Let us note that if $\eta(\varepsilon,E)<\omega_1$, it must be a successor
ordinal. Indeed,
Êfrom the compactness of the $P_\alpha(\varepsilon)$, $\alpha< \eta(\varepsilon,E)
$, Êit follows that
Ê$\bigcap_{\alpha<\eta(\varepsilon,E)} P_\alpha(\varepsilon)\not=\emptyset$, and
thus $\eta(\varepsilon,E)$
cannot equal $\sup_{\alpha<\eta(\varepsilon,E)} \alpha$. On the other hand, $\eta(E) $ is always a limit ordinal,
  in fact, if $E^*$ is separable, then $\eta(E)=\omega^\alpha$  for some countable  ordinal $\alpha$
 \cite[Corollary 3.10]{ajo}.

The condition that $\ell_1$ does not embed into $E$ ensures that $\eta(E)<\omega_1$ if and only if $E^*$ is separable and so all our preduals have countable Szlenk index.

It is also common to define the Szlenk index without taking the weak$^*$-closure;
see \cite[Section~3]{lan} for example.
Bessaga and Pe{\l}czy{\'n}ski showed in \cite{BP} that if $K$ is an (infinite) countable
compact metric space, then $C(K)$ is isomorphic to $C(\omega^{\omega^\alpha}+1)$ for
some countable ordinal $\alpha\geq0$.  Furthermore, $C(\omega^{\omega^\alpha}+1)$
and $C(\omega^{\omega^\beta}+1)$ are isomorphic only when $\alpha=\beta$.
Then Samuel showed in \cite{sam} that $\eta( C(\omega^{\omega^\alpha}+1) ) = 
\omega^{\alpha+1}$.  In particular, we have that $c_0 \cong c = C(\omega^1+1)$ and
so $\eta(c_0) = \omega$.  A self-contained treatment of these results is given in
\cite[Section~2]{ros}.

\begin{theorem}\label{thm:szlenk_of_haydon}
For any $\lambda$, the Szlenk index of $F^{(\lambda)}$ is $\omega$,
and so $F^{(\lambda)}$ is isomorphic to $c_0$, as a Banach space.
\end{theorem}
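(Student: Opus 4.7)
The proof combines three classical ingredients: Benyamini's theorem on $G$-spaces to recognise $F^{(\lambda)}$ as a $C(L)$ space, the Bessaga-Pe\l czy\'nski classification together with Samuel's computation of Szlenk indices to parameterise such $C(L)$ with $L$ countable compact metric, and a comparison of the $F^{(\lambda)}$-weak$^*$-topology on $\ell_1(\Z)$ with the canonical one to pin down the relevant parameter.

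The characterisation (\ref{thm:haydon-e1}) immediately gives two structural facts. First, $F^{(\lambda)}$ is a $G$-space inside $C(\BZ)$: each defining condition $x(\Uf)=\lambda^{-k}x(t)$ for $\Uf\in X^{(k)}_t$, $k\geq 1$, $t\in\Z$, and $x(\Uf)=0$ for $\Uf\in X^{(\infty)}$ is of the form $f(x_\alpha)=\lambda_\alpha f(y_\alpha)$. Since $F^{(\lambda)}$ has separable dual $\ell_1(\Z)$, it is itself separable, so Benyamini's theorem gives $F^{(\lambda)}\cong C(L)$ for some compact metric $L$; separability of the dual forces $L$ to be countable, so by Bessaga-Pe\l czy\'nski, $L$ is homeomorphic to $\omega^{\omega^\alpha}+1$ for some countable ordinal $\alpha\geq 0$. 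Second, $c_0(\Z)\subseteq F^{(\lambda)}$ as subspaces of $\ell_\infty(\Z)$, since any $x\in c_0(\Z)$ vanishes at every non-principal ultrafilter and hence trivially satisfies (\ref{thm:haydon-e1}).

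The key step is to show $\eta(F^{(\lambda)})\leq \omega$. Both $c_0(\Z)$ and $F^{(\lambda)}$ are isometric preduals of $\ell_1(\Z)$ (Remark \ref{rem:3.7}), so the Szlenk derivations $P_\alpha(\varepsilon)$ in each case live inside the same unit ball of $\ell_1(\Z)$ under the same norm. As $c_0(\Z)\subseteq F^{(\lambda)}$, the $F^{(\lambda)}$-weak$^*$-topology on $\ell_1(\Z)$ is finer than the $c_0(\Z)$-weak$^*$-topology. I claim $P_\alpha^{F^{(\lambda)}}(\varepsilon)\subseteq P_\alpha^{c_0(\Z)}(\varepsilon)$ for every ordinal $\alpha$ and every $\varepsilon>0$, by transfinite induction. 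At a successor stage: a sequence $(\mu_n)$ that converges to $\mu$ in the finer $F^{(\lambda)}$-weak$^*$-topology also converges to $\mu$ in the coarser $c_0(\Z)$-weak$^*$-topology, so the pre-closure set in the definition of $P_{\alpha+1}^{F^{(\lambda)}}$ lies inside the corresponding set for $P_{\alpha+1}^{c_0(\Z)}$; and $P_{\alpha+1}^{c_0(\Z)}$ is $c_0(\Z)$-weak$^*$-closed, hence $F^{(\lambda)}$-weak$^*$-closed, so it contains the relevant $F^{(\lambda)}$-weak$^*$-closure. Limit ordinals pass by intersection. Therefore $\eta(F^{(\lambda)})\leq \eta(c_0(\Z))=\omega$.

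By Samuel's formula $\eta(C(\omega^{\omega^\alpha}+1))=\omega^{\alpha+1}$, the bound $\omega^{\alpha+1}\leq\omega$ forces $\alpha=0$, and so $F^{(\lambda)}\cong C(\omega+1)\cong c_0$. The main obstacle is the transfinite induction comparing Szlenk derivations: one has to use both the finer topology's smaller family of convergent sequences and its larger family of closed sets, and arrange these in the correct direction at the successor step. Once that monotonicity is in place, everything else is bookkeeping against the classification results stated earlier in the section.
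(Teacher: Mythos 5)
There is a fatal error at the step where you claim $c_0(\Z)\subseteq F^{(\lambda)}$. If $x\in c_0(\Z)$, then indeed $x(\Uf)=0$ for every non-principal ultrafilter $\Uf$; but the defining condition (\ref{thm:haydon-e1}) says $x(\Uf)=\lambda^{-k}x(t)$ for $\Uf\in X^{(k)}_t$. Since each $X^{(k)}_t$ is nonempty, this forces $0=\lambda^{-k}x(t)$, i.e.\ $x(t)=0$ for every $t\in\Z$. So the conditions are not ``trivially satisfied''; in fact $c_0(\Z)\cap F^{(\lambda)}=\{0\}$. Worse, even ignoring this explicit computation, the inclusion cannot hold: as observed immediately after Lemma~\ref{concreteequal}, if two concrete preduals $F_1\subseteq F_2\subseteq\ell_\infty(\Z)$ are nested, they must be equal. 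An inclusion $c_0(\Z)\subseteq F^{(\lambda)}$ would therefore force $F^{(\lambda)}=c_0(\Z)$, contradicting the Corollary following Theorem~\ref{thm:haydon}. Without this inclusion the weak$^*$-topology induced by $F^{(\lambda)}$ is \emph{not} a refinement of the one induced by $c_0(\Z)$ (they are incomparable), so the transfinite induction comparing $P_\alpha^{F^{(\lambda)}}(\varepsilon)$ with $P_\alpha^{c_0(\Z)}(\varepsilon)$ has no basis, and the upper bound $\eta(F^{(\lambda)})\leq\omega$ is not established.

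The remaining scaffolding of your argument (Benyamini's theorem identifying $F^{(\lambda)}$ as a $C(L)$ space, Bessaga--Pe{\l}czy{\'n}ski's classification, and Samuel's Szlenk index formula) is correct and is the same route the paper takes to conclude $F^{(\lambda)}\cong c_0$ once the bound $\eta(F^{(\lambda)})=\omega$ is in hand. But the paper's bound is obtained by a direct argument: the strong local reflexivity statement of Lemma~\ref{haydon:plr} shows that any $a\in P_{\alpha+1}(\varepsilon)$ must have $\ell_1$-norm strictly smaller, by a fixed amount depending only on $\varepsilon$ and $|\lambda|$, than the norm bound holding on $P_\alpha(\varepsilon)$. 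Hence $P_\alpha(\varepsilon)$ empties out after finitely many steps, giving $\eta(F^{(\lambda)},\varepsilon)<\omega$ for all $\varepsilon>0$. You need an argument of that kind; the comparison with the canonical $c_0$-weak$^*$-topology cannot be made to work.
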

\begin{proof}
Fix $\varepsilon>0$.  For $r>0$, denote by $\ell_1(\Z)_{[r]}$ the closed ball of radius $r$
in $\ell_1(\Z)$.  Suppose that $P_\alpha(\varepsilon) \subseteq \ell_1(\Z)_{[r]}$.  We will
show that $P_{\alpha+1}(\varepsilon) \subseteq \ell_1(\Z)_{[r']}$ where
\begin{equation} r' = r - \frac{\varepsilon}{3} \frac{1-|\lambda|^{-1}}{1+|\lambda|^{-1}}.
\label{Szlenk.e4} \end{equation}

We recall that Remark~\ref{rem:3.7} shows that $F^{(\lambda)}$ is an isometric
predual, and so we can use the $\ell_1$-norm on $(F^{(\lambda)})^* \cong \ell_1(\Z)$
when computing the Szlenk index.
We note that
\begin{align} P_{\alpha+1}(\varepsilon) &\subseteq P_{\alpha}(\varepsilon) \setminus
\bigcup\big\{ U : U\text{ is weak$^*$-open with } \operatorname{diam}(U\cap P_{\alpha}(\varepsilon)) < \varepsilon \big\} \\
&\subseteq \big\{ a\in P_\alpha(\varepsilon) : \exists\, (a_n)\subseteq P_\alpha(\varepsilon),
\ a_n\rightarrow a \text{ weak$^*$, and } \|a_n-a\|\geq\varepsilon/3 \big\} \label{Szlenk.e5}\\
&\subseteq \big\{ a\in \ell_1(\Z)_{[r]} : \exists\, (a_n)\subseteq \ell_1(\Z)_{[r]},
\ a_n\rightarrow a \text{ weak$^*$, and } \|a_n-a\|\geq\varepsilon/3 \big\}.
\end{align}
Here, for a subset $X$ of a normed space, $\operatorname{diam}(X) = \sup\{\|x-y\|:x,y\in X\}$.
It follows that if $x\in X$ and $\|x-y\|<\varepsilon/3$ for all $y\in X$, then
$\operatorname{diam}(X)\leq 2\varepsilon/3$, which shows the containment (\ref{Szlenk.e5}).

So, let $a\in \ell_1(\Z)_{[r]}$ and choose a sequence $(a^{(n)}) \subseteq \ell_1(\Z)_{[r]}$ converging weak$^*$ to $a$ (with respect to the topology induced by $F^{(\lambda)}$) and with $\| a-a^{(n)} \|\geq\varepsilon/3$ for all $n$.  By passing to a subsequence, we may suppose that for each $k\in\Z$, the scalar sequence $(a^{(n)}_k)$ converges, say to $b_k$.  Then
\begin{equation}
\|b\|_{\ell_1(\Z)} = \sum_{k\in\Z} |b_k| = \sum_{k\in\Z} \lim_{n\rightarrow\infty} |a^{(n)}_k|
\leq \sup_n \sum_{k\in\Z} |a^{(n)}_k|
= \sup_n \|a^{(n)}\| \leq r. 
\end{equation}

Let $\delta>0$ be much smaller than $\varepsilon$, and choose $N$ such that
$\sum_{|k|>N} |a_k|<\delta$ and $\sum_{|k|>N} |b_k|<\delta$.
Choose a norm one element $y\in\ell_\infty(\Z)$ such that $y(k)(a_k-b_k)=|a_k-b_k|$ for $|k|\leq N$ and with $y(k)=0$ when $|k|>N$. By Lemma \ref{haydon:plr}, there is some $x\in F^{(\lambda)}$ with $x(k)=y(k)$ for $|k|\leq N$ and $|x(k)|\leq\lambda^{-1}$ for $|k|>N$.  Then
\begin{equation}
\sum_{k\in\Z} x_k a_k = \ip{x}{a} = \lim_{n\rightarrow\infty} \ip{x}{a^{(n)}}
= \sum_{|k|\leq N} x_k b_k + \lim_{n\rightarrow\infty} \sum_{|k|>N} x_k a^{(n)}_k,
\end{equation}
and so
\begin{align}
\sum_{|k|\leq N} |a_k-b_k| - \sum_{|k|>N} |a_k| &\leq
\sum_{|k|\leq N} |a_k-b_k| - \Big| \sum_{|k|>N} a_kx_k \Big|
= \sum_{|k|\leq N} x_k(a_k-b_k) - \Big| \sum_{|k|>N} a_kx_k \Big|\nonumber \\
&\leq \Big| \sum_{k\in\Z} x_ka_k - \sum_{|k|\leq N} x_kb_k \Big|
= \lim_{n\rightarrow\infty} \Big| \sum_{|k|>N} x_k a_k^{(n)} \Big|\nonumber \\
&\leq|\lambda|^{-1}
  \liminf_{n\rightarrow\infty} \sum_{|k|> N} |a_k^{(n)}|.\label{Szlenk.e1}
\end{align}
Then
\begin{align} \varepsilon/3 &\leq \liminf_{n\rightarrow\infty} \| a^{(n)}-a \|
= \liminf_{n\rightarrow\infty} \sum_{k\in\Z} |a_k - a^{(n)}_k|
= \sum_{|k|\leq N} |a_k - b_k| + \liminf_{n\rightarrow\infty}  \sum_{|k|>N} |a_k-a^{(n)}_k| \nonumber\\
&\leq |\lambda|^{-1}\liminf_{n\rightarrow\infty}\sum_{|k|>N}|a_k^{(n)}|+\sum_{|k|>N}|a_k|+\delta+\liminf_{n\rightarrow\infty}\sum_{|k|>N}|a_k^{(n)}|\nonumber\\
&\leq2\delta+(1+|\lambda|^{-1})\liminf_{n\rightarrow\infty}\sum_{|k|>N}|a_k^{(n)}|.\label{Szlenk.e2}
\end{align}
Since each $a^{(n)}$ has $\ell_1$-norm at most $r$, we have that
\begin{equation}\label{Szlenk.e3}
\liminf_{n\rightarrow\infty}\sum_{|k|>N}|a_k^{(n)}|+\sum_{|k|\leq N}|b_k|=\liminf_{n\rightarrow\infty}\sum_{k\in\Z}|a_k^{(n)}|\leq r,
\end{equation}

Combining the estimates (\ref{Szlenk.e1}), (\ref{Szlenk.e2}) and (\ref{Szlenk.e3}) gives
\begin{align}
\|a\|_{\ell_1(\Z)} &\leq\sum_{|k|\leq N}|a_k-b_k|+\sum_{|k|\leq N}|b_k|+\sum_{|k|>N}|a_k|\nonumber\\
&\leq \Big(\delta+|\lambda|^{-1}\liminf_{n\rightarrow\infty}\sum_{|k|>N}|a_k^{(n)}|\Big)+\Big(r-\liminf_{n\rightarrow\infty}\sum_{|k|>N}|a_k^{(n)}|\Big)+\delta\nonumber\\
&=2\delta+r-(1-|\lambda|^{-1})\liminf_{n\rightarrow\infty}\sum_{|k|>N}|a_k^{(n)}|\nonumber\\
&\leq 2\delta+r-\frac{1-|\lambda|^{-1}}{1+|\lambda|^{-1}}\Big(\varepsilon/3-2\delta\Big).
\end{align}
Since $\delta>0$ was arbitrary, we have $\|a\|\leq r'$, where $r'$ is given by (\ref{Szlenk.e4}), as claimed.

By induction, we see that for any $\alpha\in\mathbb N$, we have
\begin{equation} P_\alpha(\varepsilon) \subseteq \Big\{ a\in\ell_1(\Z) : \|a\| \leq 1 - \alpha\frac{\varepsilon}{3}
\frac{|\lambda|-1}{|\lambda|+1} \Big\},
\end{equation}
and so $\eta(F^{(\lambda)},\varepsilon)$ is finite for all $\varepsilon>0$. Hence $\eta(F^{(\lambda)})=\omega$.  It then follows that $F^{(\lambda)}$ is isomorphic to $c_0$ by the discussion following Remark \ref{rem:3.7}.
\end{proof}

\begin{remark}
Note that the only property of the preduals $F^{(\lambda)}$ used in the proof of Theorem~\ref{thm:szlenk_of_haydon} is the strong form of the principle of local reflexivity obtained in Lemma~\ref{haydon:plr}.  We also used in the proof that $F^{(\lambda)}$ is an isometric
predual, but an easy modification would work for a merely isomorphic predual.  Thus
any predual $E$ satisfying the conclusion of Lemma~\ref{haydon:plr} (for some $|\lambda|>1$) has Szlenk index $\omega$.
\end{remark}

\section{Preduals and semigroup compactifications}\label{semigroup}

In this section we formulate a characterisation of shift-invariant preduals of $\ell_1(\Z)$ as submodules of the space $M(\S) = C(\S)^*$, where $\S$ is a \emph{semitopological semigroup compactification} of $\Z$. In the next section we will use this characterisation to produce more examples of shift-invariant preduals.
 
A \emph{semitopological semigroup} is a semigroup $(\S,+)$ endowed with a topology which renders addition separately continuous. If furthermore $\S$ is compact and $\Z$ can be densely embedded into $\S$, so that this embedding is a  semigroup homomorphism, we say that $\S$ is a \emph{semitopological semigroup compactification} of $\Z$.
 
Assume that $\S$ is  such a  semitopological semigroup compactification  of $\Z$. We consider $\Z$ to be a subset of $\S$. Since $\Z$ is dense in $\S$, $\S$ is an abelian semigroup.  The dual of the space of continuous function on $\S$, $C(\S)$, can be identified with the space $M(\S)$ of Borel measures on $\S$ with bounded variation, and $\ell_1(\Z)$ is in a canonical way  a subspace of  $M(\S)$.  The convolution on $\ell_1(\Z)$  extends to a convolution on $M(\S)$, i.e. for $\Phi,\Psi\in M(\S)$,
\begin{equation}\label{conv}
\la \Phi*\Psi, f\ra =\int f(s+t )\,d\Phi(s)\,d\Psi(t), \quad f\in C(\S).
\end{equation}
The fact that  $\la \Phi*\Psi, f\ra$ is well defined and that $\Phi*\Psi\in M(\S)$ is a consequence of 
\cite{joh}, the proof of which shows that $(s,t)\mapsto f(s+t)$ is measurable with respect to the product measure $\Phi\times\Psi$.  As such Fubini's
theorem allows us to interchange the order of integration in (\ref{conv})
and hence $*$ is commutative. In this way $M(\S)$ is an abelian Banach
algebra under convolution.  By restriction, we can regard $C(\S)$ as a space of bounded functions on $\Z$.  As $\Z$ is dense in $\S$, this identifies $C(\S)$ with a subspace of $\ell_\infty(\Z)$.  
  
We can now state our characterisation of shift-invariant preduals of $\ell_1(\Z)$ in terms of semigroup compactifications. At this stage we prove the first part of the theorem, showing that this construction induces shift-invariant preduals. We return to prove part 2 of the theorem, which demonstrates that every shift-invariant predual arises in this way, in Proposition \ref{thm:easy_way_topcmpt}.
    
\begin{theorem}\label{char-preduals}
\begin{enumerate}
\item Let $\S$ be a semitopological semigroup compactification  of $\Z$. Let $\Theta: M(S)\to \ell_1(\Z)$ be a bounded projection which is also a homomorphism with respect to convolution. Define 
\begin{equation}
F=\ ^\perp\ker\Theta=\{f\in C(\S): \la\Psi, f\ra=0,\ \text{for all }\Psi\in\ker\Theta\}.
\end{equation}
If $\ker\Theta$ is weak$^*$-closed, then $F$, identified as a subspace of $\ell_\infty(\Z)$, is a shift-invariant predual of $\ell_1(\Z)$.

\item Conversely, if $E\subseteq\ell_\infty(\Z)$ is a shift-invariant predual of $\ell_1(\Z)$ then there exists a semitopological semigroup compactification $\S$ of $\Z$, and a bounded projection $\Theta: M(\S)\to\ell_1(\Z)$, which is a  homomorphism with respect to convolution, such that $\ker\Theta$ is weak$^*$-closed in $M(\S)$, and such that $E=\ ^\perp\ker\Theta$. Moreover, $\S$ can be chosen  so that  the map $\S\to \ell_1(\Z)$, $s\mapsto \Theta(\delta_s)$, is injective. 
 \end{enumerate}
\end{theorem}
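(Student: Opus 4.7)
For part~(1), I would begin with standard Banach space duality: since $\ker\Theta$ is weak$^*$-closed in $M(\S) = C(\S)^*$, the space $F = {}^\perp\ker\Theta$ satisfies $F^* \cong M(\S)/\ker\Theta$, and composing with the Banach space isomorphism $M(\S)/\ker\Theta \to \Theta(M(\S)) = \ell_1(\Z)$ induced by the surjective projection $\Theta$ identifies $F^*$ with $\ell_1(\Z)$. The map $F \to \ell_\infty(\Z)$ given by restricting $f \in C(\S)$ to the dense copy of $\Z$ in $\S$ is injective, and the plan is to unwind the pairings to check that this identifies $F$ as a concrete predual in the sense of Section~\ref{prelim}. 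Shift-invariance then follows from the algebra structure: $\Theta(\delta_1) = \delta_1$ because $\Theta$ is a projection onto $\ell_1(\Z)$, so the multiplicativity of $\Theta$ gives $\delta_1 * \ker\Theta \subseteq \ker\Theta$, and this dualises to invariance of $F$ under the bilateral shift on $\ell_\infty(\Z)$.

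For part~(2), given a shift-invariant predual $E \subseteq \ell_\infty(\Z)$, my construction of $\S$ is to take the weak$^*$-closure in $E^* = \ell_1(\Z)$ of the set of point masses $\{\delta_n : n \in \Z\}$, with the subspace topology. This is a compact Hausdorff space, and by the separate weak$^*$-continuity of convolution given by Proposition~\ref{equivalentforms}, $\S$ is closed under the convolution product and becomes a (necessarily abelian) semitopological semigroup. The map $n \mapsto \delta_n$ from $\Z$ to $\S$ is a dense semigroup embedding, injective because $E$ separates the point masses, so $\S$ is a semitopological semigroup compactification of $\Z$.

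Next I would define $\Theta: M(\S) \to \ell_1(\Z)$ as the adjoint of the bounded linear operator $\Theta^*: E \to C(\S)$ given by $\phi \mapsto \bigl(s \mapsto \langle s, \phi\rangle\bigr)$, where continuity of each $\Theta^*(\phi)$ is built into our choice of the weak$^*$-topology on $\S$. As the adjoint of a bounded operator, $\Theta$ is automatically weak$^*$-to-weak$^*$ continuous, so $\ker\Theta$ is weak$^*$-closed. A direct computation gives $\Theta(\delta_s) = s$ for each $s \in \S$; in particular $\Theta$ acts as the identity on the copy of $\ell_1(\Z)$ inside $M(\S)$, so is a projection, and the requested injectivity of $s \mapsto \Theta(\delta_s)$ is free because the map is just the inclusion $\S \hookrightarrow \ell_1(\Z)$. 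Multiplicativity of $\Theta$ falls out of the convolution formula (\ref{conv}) and the fact that the semigroup operation on $\S$ is inherited from convolution on $\ell_1(\Z)$, via a brief Fubini argument to interchange the iterated integrals.

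The main obstacle, and the step I would handle last, is establishing the identification $E = {}^\perp\ker\Theta$, where the right-hand side is viewed as a subspace of $\ell_\infty(\Z)$ via restriction to $\Z$. One inclusion is direct: for $\phi \in E$, the element $\Theta^*(\phi) \in {}^\perp\ker\Theta$ restricts on $\Z$ to $n \mapsto \langle \delta_n, \phi \rangle = \phi(n)$, which recovers $\phi$ itself, so $E \subseteq {}^\perp\ker\Theta$. For the reverse inclusion the plan is to apply Part~(1), already proved, to the $\S$ and $\Theta$ just constructed, which shows that ${}^\perp\ker\Theta$ is itself a concrete shift-invariant predual of $\ell_1(\Z)$. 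Since two concrete preduals of $\ell_1(\Z)$ with one contained in the other must be equal (the remark following Lemma~\ref{concreteequal}), we conclude $E = {}^\perp\ker\Theta$, completing the proof.
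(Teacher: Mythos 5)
Your proposal is correct, and while your part~(1) runs essentially parallel to the paper's argument (the paper checks injectivity and surjectivity of $\iota_E$ by hand, using $({}^\perp\ker\Theta)^\perp=\ker\Theta$, a Hahn--Banach extension and the Open Mapping Theorem, which is exactly the duality $({}^\perp N)^*\cong M(\S)/N$ you invoke, and shift-invariance comes in both cases from $\ker\Theta$ being an ideal), your part~(2) takes a genuinely different route. The paper first proves $E\subseteq\wap(\Z)$, forms the C$^*$-algebra $B=C^*(1,E)$, turns its character space $\hat B$ into a compact semitopological semigroup via the Arens products, and defines $\Theta=\iota_E^{-1}q$ on $B^*=M(\hat B)$, establishing multiplicativity through the module identity $\Psi\cdot x=\Theta(\Psi)\cdot x$ (Proposition~\ref{thm:easy_way_topcmpt}); you instead build $\S$ concretely as the weak$^*$-closure of $\{\delta_n:n\in\Z\}$ inside $E^*\cong\ell_1(\Z)$ and take $\Theta$ to be the adjoint of the evaluation map $E\to C(\S)$. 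Your route is leaner: it avoids WAP theory, Arens products and Gelfand duality entirely, gives weak$^*$-continuity of $\Theta$, the projection property and injectivity of $s\mapsto\Theta(\delta_s)$ essentially for free, and your closing step (apply part~(1) to the pair just built, then use that nested concrete preduals coincide, the remark after Lemma~\ref{concreteequal}) is legitimate; what the paper's heavier machinery buys is the surrounding theory used later in Section~\ref{semigroup}, namely the universal compactification $\Z^{\wap}$, the fact that every shift-invariant predual sits inside $\wap(\Z)$, and the $C^*(1,E)$ picture behind the minimality and uniqueness results (Lemma~\ref{minimal_is_canonical}, Remark~\ref{rem:4.7}). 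The one step you should write out in full is the multiplicativity of $\Theta$: separate weak$^*$-continuity of convolution on $M(\S)$ is not available to you without re-importing the WAP machinery, so weak$^*$-density of the span of point masses does not by itself suffice; instead expand $\ip{\Theta(\mu)*\Theta(\nu)}{\phi}$ by applying the $\ell_1(\Z)$-module action on $E$ (the separate weak$^*$-continuity from Proposition~\ref{equivalentforms}) twice, obtaining the iterated integral of the bounded, separately continuous function $(s,t)\mapsto\ip{s*t}{\phi}$ over $\S\times\S$, and compare with \eqref{conv} using the Fubini/measurability result of Johnson cited there --- this is what your ``brief Fubini argument'' must contain, and with it the step closes.
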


\begin{proof}[Proof of Theorem \ref{char-preduals}, part 1]
As $\Theta$ is a bounded homomorphism, $\ker\Theta$ is an ideal in $M(\S)$, and so $F={^\perp}\ker \Theta$ is a closed $\ell_1(\Z)$-submodule of $C(\S)$.  Let $E\subseteq\ell_\infty(\Z)$ be the image of $F$.  It follows that $E$ is shift-invariant. We need to show that $\iota_E:\ell_1(\Z)\rightarrow E^*$ is an isomorphism; by the Open Mapping Theorem, this is equivalent to showing that $\iota_E$ is bijective.

Let $a\in\ell_1(\Z)$ with $\iota_E(a)=0$.  Viewing $a$ as a member of $M(\S)$, it follows
that $\ip{a}{x}=0$ for all $x\in F$, so
$a \in ({^\perp}\ker \Theta)^\perp$.  As $\ker \Theta$ is weak$^*$-closed, it follows that
$\ker \Theta = ({^\perp}\ker \Theta)^\perp$, so $a\in\ker \Theta$.  But $\Theta(a)=a$, so $a=0$, and we conclude that $\iota_E$ is injective.  For surjectivity, take $\mu\in E^*$.  As $\Z$ is dense in $\S$, the restriction map $C(\S)\rightarrow
\ell_\infty(\Z)$ is an isometry, and hence the map $F\rightarrow E$ is also
an isometry, which induces $\tilde{\mu}\in F^*$ associated to $\mu$. Take a Hahn-Banach extension $\lambda\in C(\S)^* = M(\S)$  of $\tilde{\mu}$. As  $\lambda-\Theta(\lambda)\in\ker\Theta$, we have
\begin{equation}
\ip{\Theta (\lambda)}{x}=\ip{\lambda}{x} =\ip{\tilde\mu}{x}, \text{ for all $x\in F=\ {^\perp}\ker \Theta$}
\end{equation}
It follows that $\iota_E(\Theta(\lambda))=\mu$.
\end{proof}

In order to prove part 2 of Theorem \ref{char-preduals} in Proposition \ref{thm:easy_way_topcmpt} below and to associate semigroup compactifications to our shift-invariant preduals, we use weakly almost periodic functionals. While this theory is well developed in the abstract setting of Banach algebras and dual Banach algebras (see \cite{daws} for example) we only need it as it applies to $\ell_1(\Z)$, which we now review for the reader's convenience. An element $\mu\in\ell_\infty(\Z)$ is \emph{weakly almost periodic} if
the orbit of $\mu$ under the bilateral shift is a relatively weakly compact set. Alternatively one can use the Arens products $\aone$ and $\atwo$ on $\ell_\infty(\Z)^*\cong\ell_1(\Z)^{**}$ to specifiy the weakly almost periodic functionals.  Given a Banach algebra $A$, recall that $A^*$ has an $A$-module structure given by
\begin{equation}
 \ip{a\cdot\mu}{b} = \ip{\mu}{ba}, \quad \ip{\mu\cdot a}{b}
= \ip{\mu}{ab}, \qquad\mu\in A^*, a,b\in A.
\end{equation}
We can also define actions of $A^{**}$ on $A^*$ by
\begin{equation} \ip{\Psi \cdot\mu}{a} = \ip{\Psi}{\mu\cdot a}, \quad
\ip{\mu\cdot\Psi_1}{a} = \ip{\Psi_1}{a\cdot\mu},
\qquad a\in A, \mu\in A^*, \Psi\in A^{**}. 
\end{equation}
Finally, we define
\begin{equation} \ip{\Psi_1\aone\Psi_2}{\mu} = \ip{\Psi_1}{\Psi_2\cdot\mu}, \quad
\ip{\Psi_1\atwo\Psi_2}{\mu} = \ip{\Psi_2}{\mu\cdot\Psi_1},
\qquad \mu\in A^*, \Psi_1,\Psi_2\in A^{**}. 
\end{equation}
Then $\aone$ and $\atwo$ are associative, contractive products on $A^{**}$,
called the \emph{Arens products}.  The canonical map $\kappa_A:A\rightarrow A^{**}$ becomes a homomorphism for either Arens product.  These products can also be described via iterated limits. Given $\Psi_1,\Psi_2\in A^{**}$, take bounded nets $(a_{1,\alpha})$ and $(a_{2,\alpha})$ in $A$ converging weak$^*$ in $A^{**}$ to $\Psi_1$ and $\Psi_2$ respectively. The Arens products $\Psi_1\aone\Psi_2$ and $\Psi_1\atwo\Psi_2$ are then described by the following iterated limits (which are well defined):
\begin{equation}\label{eq:4.6}
\ip{\Psi_1\aone\Psi_2}{\mu} = \lim_\alpha\lim_\beta \ip{\mu}{a_{1,\alpha} a_{2,\beta}},
\quad \ip{\Psi_1\atwo\Psi_2}{\mu} =\lim_\beta \lim_\alpha \ip{\mu}{a_{1,\alpha} a_{2,\beta}},\quad \mu\in A^*.
\end{equation}
We now concentrate on the case that $A=\ell_1(\Z)$ with the convolution. 
The weakly almost periodic functionals are characterised as those $\mu\in \ell_\infty(\Z)$ for which $\ip{\Psi_1\aone\Psi_2}{\mu} =\ip{\Psi_1\atwo\Psi_2}{\mu}$ for all $\Psi_1,\Psi_2\in\ell_\infty(\Z)^*$.  This follows from the proof of Lemma 3.3 in \cite{gulick}.

Write $\wap(\Z)$ for the collection of these almost periodic elements of $\ell_\infty(\Z)$. The relevance of $\wap(\Z)$ to shift-invariant preduals is given by the next proposition.

\begin{proposition}
Let $F\subset\ell_\infty(\Z)$ be a concrete shift-invariant predual for $\ell_1(\Z)$. Then $F\subset\wap(\Z)$.
\end{proposition}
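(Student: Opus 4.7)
The plan is to invoke the Arens-product characterisation of $\wap(\Z)$ recalled immediately before the statement: for each $x\in F$ and every $\Psi_1,\Psi_2\in\ell_\infty(\Z)^*$ I must verify $\ip{\Psi_1\aone\Psi_2}{x}=\ip{\Psi_1\atwo\Psi_2}{x}$. Using the iterated-limit formula~\eqref{eq:4.6}, I fix bounded nets $(a_{1,\alpha})$ and $(a_{2,\beta})$ in $\ell_1(\Z)$ with $a_{1,\alpha}\to\Psi_1$ and $a_{2,\beta}\to\Psi_2$ weak$^*$ in $\ell_\infty(\Z)^*$, and aim to show that the two iterated limits $\lim_\alpha\lim_\beta\ip{x}{a_{1,\alpha}*a_{2,\beta}}$ and $\lim_\beta\lim_\alpha\ip{x}{a_{1,\alpha}*a_{2,\beta}}$ coincide.

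The decisive step is to upgrade weak$^*$-convergence in $\ell_\infty(\Z)^*$ to weak$^*$-convergence with respect to the $F$-topology on $\ell_1(\Z)$. Since $F\subseteq\ell_\infty(\Z)$, for every $y\in F$ the limits $\ip{y}{a_{i,\alpha}}\to\ip{\Psi_i}{y}$ persist, so each bounded net $(\iota_F(a_{i,\alpha}))$ is weak$^*$-Cauchy in $F^*$ and hence weak$^*$-convergent to some $\lambda_i\in F^*$. Surjectivity of $\iota_F$ (the predual hypothesis) forces $\lambda_i=\iota_F(b_i)$ for a unique $b_i\in\ell_1(\Z)$; equivalently, $a_{1,\alpha}\to b_1$ and $a_{2,\beta}\to b_2$ in the weak$^*$-topology induced by $F$.

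With this upgrade in hand, Proposition~\ref{equivalentforms}, which says shift-invariance of $F$ amounts to separate weak$^*$-continuity of convolution on $\ell_1(\Z)$, finishes the argument. Fixing $\alpha$ and letting $\beta$ vary first gives
\begin{equation*}
\lim_\beta\ip{x}{a_{1,\alpha}*a_{2,\beta}}=\ip{x}{a_{1,\alpha}*b_2},
\end{equation*}
and then varying $\alpha$ yields $\lim_\alpha\lim_\beta\ip{x}{a_{1,\alpha}*a_{2,\beta}}=\ip{x}{b_1*b_2}$. An identical computation with $\alpha$ and $\beta$ interchanged shows the other iterated limit equals $\ip{x}{b_1*b_2}$ as well, so the two Arens products agree on $x$ and hence $x\in\wap(\Z)$. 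The main obstacle is the upgrade in the second paragraph; once surjectivity of $\iota_F$ is used to produce $b_1,b_2\in\ell_1(\Z)$, what remains is a routine application of separate weak$^*$-continuity.
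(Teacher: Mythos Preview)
Your proof is correct and follows essentially the same route as the paper's: pick bounded nets in $\ell_1(\Z)$ converging weak$^*$ to $\Psi_1,\Psi_2$, observe they converge in the $F$-weak$^*$ topology to elements $b_1,b_2\in\ell_1(\Z)$, and then use separate weak$^*$-continuity of convolution to identify both iterated limits with $\ip{x}{b_1*b_2}$. Your version is in fact slightly cleaner than the paper's, which passes to subnets via Banach--Alaoglu to obtain $F$-weak$^*$ limits; as you note, this is unnecessary because convergence in $\ell_\infty(\Z)^*$ already restricts to convergence on $F$, and surjectivity of $\iota_F$ then names the limits $b_1,b_2$ directly.
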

\begin{proof}
Given $\Psi_1,\Psi_2\in \ell_\infty(\Z)^*$, take bounded nets $(a_{1,\alpha})$ and $(a_{2,\beta})$ in $\ell_1(\Z)$ converging weak$^*$ in $\ell_\infty(\Z)^*$ to $\Psi_1$ and $\Psi_2$ respectively.
After passing to  subnets we can assume that $(a_{1,\alpha})$ and $(a_{2,\beta})$
 are weak$^*$-convergent to $a_1,a_2\in\ell_1(\Z)$ respectively with respect to the duality between $F$ and $\ell_1(\Z)$. For $\mu\in F$, it follows from the fact that the convolution multiplication in $\ell_1(\Z)$  is separately weak$^*$-continuous, that
\begin{align} \ip{\Psi_1\aone\Psi_2}{\mu}
&=\lim_\alpha  \lim_\beta\ip{\mu}{a_{1,\alpha}*a_{2,\beta}}
= \lim_\alpha \ip{\mu}{a_{1,\alpha}*a_{2}} = \ip{\mu}{a_1*a_2} \nonumber \\
&= \lim_\beta \ip{\mu}{a_{1}*a_{2,\beta}}
=\lim_\beta  \lim_\alpha\ip{\mu}{a_{1,\alpha}*a_{2,\beta}} = \ip{\Psi_1\atwo\Psi_2}{\mu}.
\end{align}
Thus $F\subset\wap(\Z)$.
\end{proof}

The descriptions above imply that $\wap(\Z)$ is closed under multiplication (in $\ell_\infty(\Z)$) and under 
 taking adjoints, and it is therefore  a C$^*$-subalgebra of $\ell_\infty(\Z)$, which is invariant under the bilateral shift and contains the unit $1$ of $\ell_\infty(\Z)$.  Write $\Z^\wap$ for the character space of $\wap(\Z)$, so that the Gelfand transform gives a canonical isometric isomorphism $\wap(\Z)\cong C(\Z^\wap)$. Each member of $\Z$ induces a character on $\Z^\wap$ by evaluation, and this gives us a map $\Z\rightarrow\Z^\wap$ which
has dense range.  Since $c_0(\Z)\subset\wap(\Z)$, this map is injective and so $\Z^\wap$ is a compactification of $\Z$. As we will review below, $\Z^\wap$ has a natural semigroup structure coming from the Arens products.  Furthermore, it is the maximal semigroup compactification of $\Z$, in the sense that given any other compact semitopological semigroup $\S$ and a homomorphism $\phi:\Z\rightarrow \S$, with dense range, then there exists a (necessarily unique)
continuous homomorphism $\tilde\phi:\Z^{\wap}\rightarrow \S$, such that the following
diagram is commutative:
\begin{equation}\label{wap:diag}
\xymatrix{ \Z \ar[r]^{\phi} \ar[d] & \S \\
\Z^{\wap} \ar[ru]_{\tilde\phi} }
\end{equation}

Let $F\subset\wap(\Z)$ be a closed, shift-invariant subspace.  Using the representation \eqref{eq:4.6} the Arens products can be used to show that the  product on $F^*=\ell_\infty(\Z)^*/F^\perp$ given by 
\begin{equation}
(\Psi_1+F^\perp)(\Psi_2+F^\perp)=(\Psi_1\aone\Psi_2)+F^\perp=(\Psi_1\atwo\Psi_2)+F^\perp,\quad \Psi_1,\Psi_2\in\ell_\infty(\Z)^*,
\end{equation}
is well defined and turns $F^*$ into a dual Banach algebra (see \cite[Proposition~2.4]{daws}, or, as $\ell_1(\Z)$ is commutative,
see \cite[Lemma~1.4]{lau}).
 Now consider a shift-invariant C$^*$-subalgebra $B$ of $\wap(\Z)$ containing $1$. Given $\mu,\nu\in B$ and $n\in\Z$, we have
\begin{equation}
 \ip{\mu\nu\cdot\delta_n}{\delta_m} = (\mu\nu)(n+m) = \mu(n+m) \nu(n+m)
= \ip{(\mu\cdot\delta_n)(\nu\cdot\delta_n)}{\delta_m}, \quad m\in\Z,
\end{equation}
so that $(\mu\nu)\cdot\delta_n=(\mu\cdot\delta_n)(\nu\cdot\delta_n)$. 
Thus, for a character $\Psi$ on $B$,
\begin{equation} \ip{\Psi\cdot \mu\nu}{\delta_n} = \ip{\Psi}{\mu\nu\cdot\delta_n}
= \ip{\Psi}{\mu\cdot\delta_n}\ip{\Psi}{\nu\cdot\delta_n}
= \ip{(\Psi\cdot \mu)(\Psi\cdot \nu)}{\delta_n},\quad n\in\Z,
\end{equation}
so that $\Psi\cdot(\mu\nu)=(\Psi\cdot\mu)(\Psi\cdot\nu)$. Therefore, for
characters $\Psi_1$ and $\Psi_2$ on $B$,
\begin{equation}
\ip{\Psi_1\aone\Psi_2}{\mu\nu} = \ip{\Psi_1}{\Psi_2\cdot \mu\nu}
= \ip{\Psi_1}{\Psi_2\cdot \mu} \ip{\Psi_1}{\Psi_2\cdot \nu}
= \ip{\Psi_1\aone\Psi_2}{\mu} \ip{\Psi_1\aone\Psi_2}{\nu},
\end{equation}
and so $\Psi_1\aone\Psi_2$ is also a character on $B$.  Let $\hat B$ be the character space of
$B$, so that the product on $B^*$ restricts to a product on $\hat B$.  Since the product on $B^*$ is separately weak$^*$-continuous, this turns $\hat B$ into a compact semitopological semigroup.  Furthermore, for each $n\in\Z$, evaluation at $n$ gives a character $\delta_n$, and since $\aone$ extends the product on $\ell_1(\Z)$, this gives
a semigroup homomorphism from $\Z$ to $\hat B$.

In particular, we can apply the previous paragraph when $B=\wap(\Z)$, and so $\Z^\wap$ becomes
a compact semitopological semigroup, and the two Arens products on $M(\Z^\wap)$ coincide with the convolution, introduced in (\ref{conv}).
Now take another compact semitopological semigroup $\S$ and a homomorphism $\phi:\Z\rightarrow \S$.  This induces a $*$-homomorphism $\theta:C(\S) \rightarrow \ell_\infty(\Z)$. As $\S$ is semitopological and compact, it is easily checked that $\theta(C(\S))\subset\wap(\Z)$, and so $\theta$ induces the continuous map $\tilde\phi:\Z^\wap \rightarrow \S$ so that the diagram (\ref{wap:diag}) commutes.  The density of $\Z$ in $\Z^\wap$ ensures that $\tilde\phi$ is a semigroup homomorphism and is uniquely determined.  By replacing $\S$ by the closure of $\phi(\Z)$ in $\S$ we may always assume that $\phi(\Z)$ is dense in $\S$, in which case $\theta:C(\S)\rightarrow\wap(\Z)$ will be injective, and hence an isometry onto its range.

Given  a semitopological semigroup  compactification of  $\S$, and let $\theta: C(\S)\to \wap(\Z)$ be defined as above. Then
$\theta^*:\wap(\Z)^*\rightarrow M(\S)$ is a homomorphism 
 with respect to convolution. To see this, it suffices to check that $\theta^*(\delta_{n+m})=\theta^*(\delta_n)\theta^*(\delta_m)$ for $m,n\in\Z$ as $\ell_1(\Z)$ is weak$^*$-dense in the dual Banach algebra $\wap(\Z)^*$. This follows as $\theta^*(\delta_n)=\delta_{\phi(n)}$, and so
\begin{align} \ip{\theta^*(\delta_n)* \theta^*(\delta_m)}{x}
&= \ip{\delta_{\phi(n)}* \delta_{\phi(m)}}{x}
= \int_{\S\times \S} x(s+t) \ d\delta_{\phi(n)}(s) \ d\delta_{\phi(m)}(t)
= x\big(\phi(n) + \phi(m)\big) \nonumber\\ &= x\big(\phi(n+m)\big)
= \ip{\theta(x)}{\delta_{n+m}} = \ip{\theta^*(\delta_{n+m})}{x},\quad x\in C(\S).\end{align}

Now suppose that $E\subseteq\ell_\infty(\Z)$ is a shift-invariant predual for
$\ell_1(\Z)$, and let $B$ be the unital C$^*$-algebra generated by $E$ in
$\ell_\infty(Z)$.  As $E$ is shift-invariant, it follows that $B$ is also, and
as $E\subseteq\wap(\Z)$, also $B\subseteq\wap(\Z)$.  Thus $B=C(\hat{B})$ for
some compact semitopological semigroup $\hat{B}$.  We have the commutative
diagram
\begin{equation}\label{wap:diag2} \xymatrix{ E^* & B^* \ar@{->>}[l]_-q & \ell_\infty(\Z)^* \ar@{->>}[l] \\
& \ell_1(\Z) \ar[lu]^{\iota_E} \ar[u]^{\iota_B} \ar[ru]_{\kappa_{\ell_1(\Z)}} } \end{equation}
where the maps along the top are quotients.  As $\iota_E$ is an isomorphism,
it follows that $\iota_B:\ell_1(\Z)\rightarrow B^*$ is an isomorphism
onto its range.  Now, $B^*=M(\hat{B})$ which is a dual Banach algebra equipped with the product from (\ref{conv}), and $\iota_B$ is an algebra homomorphism. Note too that the homomorphism $\Z\rightarrow\hat{B}$ is injective. This follows, as $E$, and hence $B$, separates the points of $\ell_1(\Z)$.
Indeed, if we denote $\phi$ the map $\Z\rightarrow\hat B$, then
$\iota_B(\delta_n) = \delta_{\phi(n)} \in M(\hat B)=B^*$.
We are now finally in a position to associate a semigroup and homomorphic projection to a shift-invariant predual, and to prove the second part of Theorem \ref{char-preduals}.

\begin{proposition}\label{thm:easy_way_topcmpt}
Let $E\subseteq\ell_\infty(\Z)$ be a shift-invariant predual for $\ell_1(\Z)$,
and form $B=C^*(1,E)$ as above.  There is a bounded Banach algebra homomorphism $\Theta:M(\hat{B})\rightarrow\ell_1(\Z)$ such that $\iota_B\Theta$ is a projection on $M(\hat{B})$.  Furthermore, $\ker \Theta$ is weak$^*$-closed, and
\begin{equation}
 E = {^\perp}\ker \Theta = \big\{ x\in B : \ip{\Psi}{x}=0,\ \Psi\in B^*,\ \Theta(\Psi)=0 \big\}.
 \end{equation}
The map $\hat{B}\rightarrow\ell_1(\Z)$ given by $\gamma\mapsto\Theta(\delta_\gamma)$ is injective.
\end{proposition}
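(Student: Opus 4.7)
The plan is to take $\Theta$ to be the composition $\iota_E^{-1}\circ q$, where $q\colon B^*\twoheadrightarrow E^*$ is the quotient map appearing in the diagram (\ref{wap:diag2}). Since $\iota_E$ is an isomorphism and $q\iota_B=\iota_E$, the map $\Theta\colon M(\hat B)\to\ell_1(\Z)$ is bounded and satisfies $\Theta\iota_B=\iota_E^{-1}q\iota_B=\iota_E^{-1}\iota_E=\id_{\ell_1(\Z)}$. Hence $(\iota_B\Theta)^2=\iota_B(\Theta\iota_B)\Theta=\iota_B\Theta$, so $\iota_B\Theta$ is a bounded projection of $M(\hat B)$ onto $\iota_B(\ell_1(\Z))$.

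The main technical point is to verify that $\Theta$ is a Banach algebra homomorphism. I would first check this for $q$, using the iterated-limit description of the Arens products from (\ref{eq:4.6}). Given $\Psi_1,\Psi_2\in B^*$, lift them to $\widetilde\Psi_1,\widetilde\Psi_2\in\ell_\infty(\Z)^*$ and choose bounded nets $(a_\alpha),(b_\beta)$ in $\ell_1(\Z)$ converging weak$^*$ to them. Formula (\ref{eq:4.6}) and the discussion following it show that the product on $B^*$ satisfies $\ip{\Psi_1\Psi_2}{\mu}=\lim_\alpha\lim_\beta\ip{\mu}{a_\alpha\ast b_\beta}$ for $\mu\in B$, with the analogous formula governing $q(\Psi_1)q(\Psi_2)$ against $\mu\in E\subseteq B$. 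Restricting the identity on $B$ to $\mu\in E$ therefore yields $q(\Psi_1\Psi_2)=q(\Psi_1)q(\Psi_2)$. Combined with the fact that $\iota_B$ is a homomorphism and $q\iota_B=\iota_E$, this forces $\iota_E$ to be a homomorphism, and hence so is $\Theta=\iota_E^{-1}\circ q$.

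For the remaining structural claims, $\ker\Theta=\ker q$ because $\iota_E^{-1}$ is an isomorphism, and $\ker q$ is precisely the annihilator in $B^*$ of $E$, which is automatically weak$^*$-closed. The bipolar theorem then gives ${}^\perp\ker\Theta={}^\perp(E^\perp)=E$, since $E$ is a closed subspace of $B$.

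Finally, for the injectivity of $\gamma\mapsto\Theta(\delta_\gamma)$, suppose $\Theta(\delta_\gamma)=\Theta(\delta_{\gamma'})$ with $\gamma,\gamma'\in\hat B$. Then $\delta_\gamma-\delta_{\gamma'}\in\ker\Theta$, so the identification $E={}^\perp\ker\Theta$ forces $\gamma(e)=\gamma'(e)$ for every $e\in E$. The characters $\gamma,\gamma'$ are $\ast$-homomorphisms $B\to\mathbb C$ fixing $1$, so agreement on $E$ extends to agreement on the $\ast$-subalgebra generated by $1$ and $E$, which is dense in $B=C^*(1,E)$; thus $\gamma=\gamma'$. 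The main obstacle is the multiplicativity check for $q$ in the second paragraph; the remaining assertions are bookkeeping consequences of standard Banach-space and $C^*$-algebra facts.
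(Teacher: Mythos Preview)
Your proposal is correct and follows essentially the same route as the paper: the same definition $\Theta=\iota_E^{-1}q$, the same projection and kernel identifications, and the same Stone--Weierstrass argument for injectivity on $\hat B$. The only organizational difference is in the multiplicativity check: you argue that $q$ is a homomorphism because the products on $B^*$ and $E^*$ are both quotients of the same Arens product on $\ell_\infty(\Z)^*$ (via iterated limits), whereas the paper computes directly that $\Psi\cdot x=\Theta(\Psi)\cdot x$ for $x\in E$ using the $\ell_1(\Z)$-module structure of $E$, and then reads off $\Theta(\Psi_1*\Psi_2)=\Theta(\Psi_1)*\Theta(\Psi_2)$. These are two phrasings of the same computation; note also that $\iota_E$ being a homomorphism is immediate from the definition of the Arens-induced product on $E^*$, so you need not deduce it from $q\iota_B=\iota_E$.
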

\begin{proof}
We define a bounded linear map $\Theta=\iota_E^{-1} q:B^*\rightarrow\ell_1(\Z)$, where $q$ is the quotient map $B^*\rightarrow E^*=B^*/E^\perp$.  The commutative diagram in (\ref{wap:diag2}) shows that $\Theta\iota_B=\id_{\ell_1(\Z)}$ and so $\iota_B\Theta$ is a projection onto $\iota_B(\ell_1(\Z))$. By construction, $\ker\Theta=E^\perp$ which is weak$^*$-closed in $B^*$ and so
$E = {^\perp}(E^\perp) = {^\perp}\ker \Theta$.

We now check that $\Theta$ is an algebra homomorphism.  Given $x\in E$ and $a\in\ell_1(\Z)$, we have $x\cdot a\in E$, as $E$ is shift-invariant, and hence is an $\ell_1(\Z)$-module.  For $\Psi\in B^*$ and $x\in E$,
\begin{align}
 \ip{\Psi\cdot x}{a} &= \ip{\Psi}{x\cdot a} = \ip{q(\Psi)}{x\cdot a}
= \ip{x\cdot a}{\iota_E^{-1}q(\Psi)}\nonumber \\
&= \ip{x\cdot a}{\Theta(\Psi)} = \ip{\Theta(\Psi)\cdot x}{a}.
 \end{align}
It follows that $\Psi\cdot x = \Theta(\Psi)\cdot x$.  Similarly, 
$x\cdot\Psi = x\cdot \Theta(\Psi)$.  Thus, for $\Psi_1,\Psi_2\in B^*$ and $x\in E$,
\begin{equation} \ip{x}{\Theta(\Psi_1*\Psi_2)} = \ip{\Psi_1*\Psi_2}{x} =
\ip{\Psi_1}{\Psi_2\cdot x} = \ip{\Psi_1}{\Theta(\Psi_2)\cdot x} =
\ip{x}{\Theta(\Psi_1)*\Theta(\Psi_2)},
\end{equation}
showing that $\Theta$ is a homomorphism.

Finally, suppose that $\gamma_1,\gamma_2\in\hat B$ are distinct, and such that $\Theta(\delta_{\gamma_1})=\Theta(\delta_{\gamma_2})$.  Thus $\ip{\gamma_1}{x}
= \ip{\gamma_2}{x}$ for $x\in E$.  As a subspace of $C(\hat B)$, this means that
$E$ fails to separate the points $\gamma_1$ and $\gamma_2$.  As $C(\hat B)$ is
generated by $1$ and $E$, it follows that $C(\hat B)$ does not separate the points
$\gamma_1$ and $\gamma_2$, which is a contradiction.
So $\hat B\rightarrow \ell_1(\Z), \gamma\mapsto\Theta(\delta_\gamma)$ is injective.
\end{proof}

Given a shift-invariant predual $E\subset\ell_\infty(\Z)$ we say that $(\S,\Theta)$ \emph{induces} $E$ if $\S$ and $\Theta$ satisfy the hypotheses of part 1 of Theorem \ref{char-preduals} giving $E$ as the resulting predual.  In particular, given any predual $E$, Proposition \ref{thm:easy_way_topcmpt} gives a pair $(\hat{B},\Theta)$ inducing $E$.  The next section will focus on examples of preduals produced by Theorem \ref{char-preduals}; the rest of this section investigates the general theory which arises from constructions of this type.  First we note how to compute weak$^*$-limits in $\ell_1(\Z)$ with respect to these preduals. This approach is well adapted to finding the limit points of the set $\{\delta_n:n\in\Z\}$.  In the next proposition all weak$^*$-limits in $M(\S)$ are computed with respect to $C(\S)$, while weak$^*$-limits in $\ell_1(\Z)$ are with respect to $E$.

\begin{proposition}\label{convergence}
Let $(\S,\Theta)$ induce the shift-invariant predual $E\subset\ell_\infty(\Z)$. 
\begin{enumerate}
\item Let $(a_\alpha)$ be a bounded net in $\ell_1(\Z)$ converging weak$^*$ to $\mu\in M(\S)$. Then $(a_\alpha)$ converges weak$^*$ to $\Theta(\mu)$ in $\ell_1(\Z)$.
\item Suppose $(\gamma_k)$ is a net in $\S$ converging to $\gamma$. Then $\Theta(\delta_{\gamma_k})\rightarrow\Theta(\delta_\gamma)$ weak$^*$ in $\ell_1(\Z)$.
\item Given any subset $\S_0$ of $\S$, the weak$^*$-closure  of $\{\Theta(\delta_{\gamma_0}):\gamma_0\in\S_0\}$ in $\ell_1(\Z)$ is $\{\Theta(\delta_\gamma):\gamma\in\overline{\S_0}\}$.
\end{enumerate}
\end{proposition}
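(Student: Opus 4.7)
The whole proposition hinges on the single identity
\begin{equation}
\ip{\mu}{x}=\ip{x}{\Theta(\mu)},\quad \mu\in M(\S),\ x\in E,
\end{equation}
where we view $x$ as a continuous function on $\S$ on the left and as the corresponding bounded sequence in $\ell_\infty(\Z)$ on the right (these agree because $\Z$ is dense in $\S$). This identity holds because $\Theta$ is a projection onto $\ell_1(\Z)\subseteq M(\S)$: hence $\Theta\circ\Theta=\Theta$, so $\mu-\Theta(\mu)\in\ker\Theta$, and this in turn pairs to zero against every $x\in E={}^\perp\ker\Theta$.

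Given this identity, part (1) is immediate: for any $x\in E$, weak$^*$-convergence $a_\alpha\to\mu$ in $M(\S)$ yields $\ip{a_\alpha}{x}\to\ip{\mu}{x}=\ip{x}{\Theta(\mu)}$, which is the desired weak$^*$-convergence $a_\alpha\to\Theta(\mu)$ in $\ell_1(\Z)$. Part (2) follows similarly, since pointwise continuity of $x\in C(\S)$ on $\S$ gives $\ip{\delta_{\gamma_k}}{x}=x(\gamma_k)\to x(\gamma)=\ip{\delta_\gamma}{x}$, and the key identity rewrites each side as $\ip{x}{\Theta(\delta_{\gamma_k})}$ and $\ip{x}{\Theta(\delta_\gamma)}$, respectively.

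For part (3), part (2) immediately gives the inclusion $\{\Theta(\delta_\gamma):\gamma\in\overline{\S_0}\}\subseteq\overline{\{\Theta(\delta_{\gamma_0}):\gamma_0\in\S_0\}}^{w^*}$. For the reverse inclusion, take $a\in\ell_1(\Z)$ in the weak$^*$-closure and pick a net $(\gamma_\alpha)\subseteq\S_0$ with $\Theta(\delta_{\gamma_\alpha})\to a$ weak$^*$; by compactness of $\S$ pass to a subnet with $\gamma_\alpha\to\gamma\in\overline{\S_0}$, and then part (2) gives $\Theta(\delta_{\gamma_\alpha})\to\Theta(\delta_\gamma)$ weak$^*$ along this subnet. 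Since $\iota_E$ is an isomorphism, $E$ separates the points of $\ell_1(\Z)$, so the $E$-weak$^*$-topology is Hausdorff and we conclude $a=\Theta(\delta_\gamma)$. There is no substantive obstacle; the only conceptual point is recognising that the projection property of $\Theta$ is exactly what is needed to replace $\mu$ by $\Theta(\mu)$ in pairings against $E$, after which the argument reduces to continuity plus a standard compactness-and-Hausdorffness routine.
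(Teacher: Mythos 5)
Your proof is correct, and for part (2) it is in fact cleaner than the paper's own argument. The key identity $\ip{\mu}{x}=\ip{x}{\Theta(\mu)}$ for $\mu\in M(\S)$, $x\in E$ (which you deduce from the projection property $\mu-\Theta(\mu)\in\ker\Theta$ and $E={}^\perp\ker\Theta$, using density of $\Z$ in $\S$ to identify $F\subseteq C(\S)$ with $E\subseteq\ell_\infty(\Z)$) is exactly what the paper uses for part (1), and your part (1) and part (3) agree with the paper's proofs step for step.

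For part (2), however, the paper takes a detour: it reduces to part (1) by first passing (twice) to subnets so that $\Theta(\delta_{\gamma_j})$ converges weak$^*$ to some $\mu'\in M(\S)$, then invoking weak$^*$-closedness of $\ker\Theta$ to conclude $\delta_\gamma-\mu'\in\ker\Theta$ and hence $\Theta(\mu')=\Theta(\delta_\gamma)$, and finally appealing to the fact that every subnet of $\Theta(\delta_{\gamma_k})$ has a further subnet converging to $\Theta(\delta_\gamma)$. You instead apply the key identity directly at $\mu=\delta_{\gamma_k}$ and $\mu=\delta_\gamma$, yielding $\ip{x}{\Theta(\delta_{\gamma_k})}=x(\gamma_k)\to x(\gamma)=\ip{x}{\Theta(\delta_\gamma)}$ by continuity of $x\in C(\S)$, and you are done. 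This avoids the subnet compactness argument entirely and does not invoke weak$^*$-closedness of $\ker\Theta$ at this stage (that hypothesis is of course still needed elsewhere, in proving that $E$ is a predual, but not for part (2) itself). Both approaches are valid, but yours is more direct and arguably the ``right'' proof.
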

\begin{proof}
1.~~We have that $\ip{\mu}{x} = \lim_\alpha \ip{a_\alpha}{x}$ for $x\in F$.
As $F = {^\perp}\ker \Theta$, we see that $\ip{\mu}{x} = \ip{x}{\Theta(\mu)}$ for $x\in F$.  It follows that $a_\alpha\rightarrow \Theta(\mu)$ weak$^*$ with respect to $E$.

2.~~Suppose that $\gamma_k\rightarrow\gamma$ in $\S$, so that $\delta_{\gamma_k}\rightarrow
\delta_\gamma$ weak$^*$ in $M(\S)$.
  Observe that $\delta_{\gamma_k}-\Theta(\delta_{\gamma_k}) \in \ker{\Theta}$ for each $k$.  Pick some
subnet of $(\gamma_k)$, and then pass to a further subnet $(\gamma_j)$ to ensure that
$\Theta(\delta_{\gamma_j})$ converges weak$^*$ to $\mu\in M(\S)$, so that
$\delta_{\gamma_j} - \Theta(\delta_{\gamma_j}) \rightarrow \delta_\gamma - \mu$
weak$^*$ in $M(\S)$.   As $\ker\Theta$
is weak$^*$-closed, it follows that $\delta_\gamma - \mu \in \ker\Theta$, that is,
$\Theta(\mu) = \Theta(\delta_\gamma)$.  By part~1, it follows that
$\Theta(\delta_{\gamma_j}) \rightarrow \Theta(\delta_\gamma)$ weak$^*$ in $\ell_1(\Z)$.
As every subnet of $\Theta(\delta_{\gamma_k})$ has a subnet converging to
$\Theta(\delta_\gamma)$, the statement follows.

3.~~Given a net $(\gamma_k)$ such that $\Theta(\delta_{\gamma_k})$ is weak$^*$-convergent in $\ell_1(\Z)$ we can pass to a subnet so that $\gamma_k\rightarrow\gamma$ in $\S$, whence the result follows from the previous part.
\end{proof}

A pair $(\S,\Theta)$ used to construct a shift-predual $E$ via the first part of 
Theorem~\ref{char-preduals} may have an unnecessarily large semigroup. To this end we say that a pair $(\S,\Theta)$ inducing a predual $E$ is \emph{minimal} if the semigroup homomorphism $\S\rightarrow\ell_1(\Z)$ given by $\gamma\mapsto\Theta(\delta_\gamma)$ is injective.  Of course, the pair $(\hat B,\Theta)$ constructed by Proposition~\ref{thm:easy_way_topcmpt} is minimal.  Clearly, if we start with $E$, and form
$(\hat B,\Theta)$, then $E$ can be reconstructed by part 1 of Theorem~\ref{char-preduals}. 
The next few results show that a minimal pair is uniquely determined by the predual and examine restrictions on the structure of the semigroup in a minimal pair.

\begin{lemma}\label{minimal_is_canonical}
Let $(\S,\Theta)$ be minimal, construct $E$ using part 1 of Theorem~\ref{char-preduals},
and then use Proposition~\ref{thm:easy_way_topcmpt} to construct $(\hat B,\Theta')$ say.
Then $\hat B$ is canonically isomorphic to $\S$, and under this identification,
$\Theta$ and $\Theta'$ agree.
\end{lemma}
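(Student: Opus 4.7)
The plan is to use the minimality condition to show that $F={^\perp}\ker\Theta$, viewed as a subspace of $C(\S)$, separates the points of $\S$, whence Stone--Weierstrass and Gelfand duality give the required identification $\hat B\cong\S$, and then to compare $\Theta$ and $\Theta'$ via their kernels and ranges.

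First I would identify $E$ with $F$ via the isometric restriction $C(\S)\to\ell_\infty(\Z)$ (which is isometric because $\Z$ is dense in $\S$), so that $B=C^*(1,E)$ corresponds to the unital C$^*$-subalgebra of $C(\S)$ generated by $F$. The key step is to show that $F$ separates points of $\S$, and this is exactly where minimality enters. Given distinct $\gamma_1,\gamma_2\in\S$, minimality gives $\Theta(\delta_{\gamma_1})\neq\Theta(\delta_{\gamma_2})$ in $\ell_1(\Z)$. Since $\Theta$ is a projection and $\Theta(\delta_{\gamma_i})\in\ell_1(\Z)\subseteq M(\S)$, we have $\delta_{\gamma_i}-\Theta(\delta_{\gamma_i})\in\ker\Theta$, so for every $x\in F$,
\begin{equation}
x(\gamma_i)=\ip{\delta_{\gamma_i}}{x}=\ip{\Theta(\delta_{\gamma_i})}{x}.
\end{equation}
As $\iota_E$ is bijective, $E$ separates points of $\ell_1(\Z)$, so some $x\in E$ distinguishes $\Theta(\delta_{\gamma_1})$ from $\Theta(\delta_{\gamma_2})$, and thus $x(\gamma_1)\neq x(\gamma_2)$.

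Stone--Weierstrass now yields $C^*(1,F)=C(\S)$, so $B\cong C(\S)$ as unital C$^*$-algebras, and Gelfand duality provides a canonical homeomorphism $h:\hat B\to\S$ restricting to the identity on the dense image of $\Z$ in each. Because the semigroup multiplications on both $\hat B$ and $\S$ are separately continuous and coincide on $\Z$, density of $\Z$ forces $h$ to be a semigroup isomorphism. Under the induced identification $M(\hat B)\cong M(\S)$, the inclusion $\iota_B:\ell_1(\Z)\to M(\hat B)$ becomes the canonical inclusion $\ell_1(\Z)\hookrightarrow M(\S)$.

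Finally I would verify $\Theta=\Theta'$. Both are bounded linear maps $M(\S)\to\ell_1(\Z)$. Because $\ker\Theta$ is weak$^*$-closed,
\begin{equation}
\ker\Theta=({^\perp}\ker\Theta)^\perp=F^\perp=E^\perp,
\end{equation}
and this equals $\ker\Theta'$ by the construction of $\Theta'$ in Proposition~\ref{thm:easy_way_topcmpt}. Furthermore, both maps restrict to the identity on $\ell_1(\Z)\subseteq M(\S)$: for $\Theta$ this is the projection hypothesis, while for $\Theta'$ it follows from $\Theta'\iota_B=\id_{\ell_1(\Z)}$ together with the identification $\iota_B(a)=a$. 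Two projections with the same range and the same kernel coincide, so $\Theta=\Theta'$. The main obstacle I foresee is the point-separation step: minimality is only an injectivity statement about $\gamma\mapsto\Theta(\delta_\gamma)$, and the work is in translating this into separation on $\S$ by elements of the concrete subspace $F\subseteq C(\S)$; once that is done, Gelfand duality and the uniqueness of projections dispatch the remainder.
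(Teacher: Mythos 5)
Your proof is correct and follows essentially the same strategy as the paper: use minimality to show $F$ separates points of $\S$, apply Stone--Weierstrass and Gelfand duality to identify $\hat B$ with $\S$, then check $\Theta=\Theta'$. Your separation argument runs in the contrapositive direction to the paper's (the paper assumes $f(\gamma_1)=f(\gamma_2)$ for all $f\in F$ and deduces $\delta_{\gamma_1}-\delta_{\gamma_2}\in\ker\Theta$ directly via weak$^*$-closedness, rather than starting from minimality and exhibiting a separating element), and for the final step you appeal to the abstract fact that two idempotents with the same range and kernel agree — the paper instead chases through the quotient map $q\colon M(\S)\to F^*$ and the restriction isomorphism $r\colon F\to E$ to arrive at the same conclusion. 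Both routes are valid; yours is slightly more streamlined conceptually, while the paper's diagram-chase makes the identifications completely explicit.
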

\begin{proof}
Using the notation of  Proposition \ref{thm:easy_way_topcmpt}, we claim that $C^*(F,1) = C(\S)$.
This will follow if we can show that $F$ separates the points of $\S$.  Indeed, suppose that $\gamma_1,\gamma_2\in\S$ satisfy $f(\gamma_1)=f(\gamma_2)$ for each $f\in F$.  Then $\ip{\delta_{\gamma_1}-\delta_{\gamma_2}}{f} = 0$ for each $f\in F={}^\perp\ker\Theta$,
so $\delta_{\gamma_1}-\delta_{\gamma_2} \in \ker\Theta$, as $\ker\Theta$ is weak$^*$-closed.
Thus $\Theta(\delta_{\gamma_1}) = \Theta(\delta_{\gamma_2})$, so by minimality,
$\gamma_1=\gamma_2$, as required.  We shall henceforth identify $\hat B$ with $\S$.

We shall be careful with identifications.  We regard $F$ as a subspace of $C(\S)$,
and by restriction of functions on $\S$ to functions on $\Z$, we obtain $E$.
Let $r:F\rightarrow E \subseteq \ell_\infty(\Z)$ be this restriction map, and let $j:\ell_1(\Z)\rightarrow M(\S)$ be the inclusion, so that
$\ip{j(a)}{f} = \ip{r(f)}{a}$ for $a\in\ell_1(\Z)$ and $f\in F$.  As $\Z$ is dense
in $\S$, the map $r$ is an isomorphism, and so also $r^*:E^*\rightarrow F^*$ is an
isomorphism.  Let $q:C(\S)^*=M(\S)\rightarrow F^*$ be the quotient map, and
recall the map $\iota_E^{-1}:E^*\rightarrow\ell_1(\Z)$.  Then $\Theta' = 
\iota_E^{-1} (r^*)^{-1} q$.  As
\begin{equation}
\ip{\iota_E(a)}{r(f)} = \ip{r(f)}{a} = \ip{j(a)}{f} = \ip{qj(a)}{f},
\qquad a\in\ell_1(\Z), f\in F, 
\end{equation}
it follows that $qj = r^*\iota_E$, and so $qj\Theta' = r^* \iota_E \Theta' = q$.  As
$F^\perp = \ker\Theta$, for $\mu\in M(\S)$, we
have $j\Theta'(\mu) - \mu \in \ker\Theta$, that is, $\Theta(\mu) = \Theta(j\Theta'(\mu))
= j\Theta'(\mu)$, as $\Theta$ is a projection onto $j(\ell_1(\Z))$.  Thus
$\Theta = \Theta'$ under the appropriate identifications.
\end{proof}

\begin{remark}\label{rem:4.7}
Let $(\S,\Theta)$ be a minimal pair inducing $E$.  As $E^*\cong\ell_1(\Z)$, it follows that
$E$ is separable, and so also $B=C^*(E,1)$ is separable.  Then the closed unit ball of
$B^*$ is metrisable, and hence $\hat B$ is metrisable.  In particular, in the minimal case it is enough to
consider only sequences to understand the topology of $\hat B = \S$.
\end{remark}

When a semigroup compactification $\S$ of $\Z$ is countable, a standard Baire category argument shows that the points of $\Z$ are isolated in $\S$, and so in this case the embedding $\Z\rightarrow\S$ is a homeomorphism onto its range. On the other hand $\Z\rightarrow \mathbb T;\ n\mapsto e^{in}$ is a (semi)group homomorphism with dense range in which the points of $\Z$ are not isolated in their image. We have not been able to determine whether the semigroup $\S$ in a minimal pair $(\S,\Theta)$ inducing a
shift-invariant predual is necessarily countable; nevertheless the next proposition shows that points of $\Z$ are always isolated in $\S$.

\begin{proposition}\label{s-app1}
Let $E\subset\ell_\infty(\Z)$ be a shift-invariant predual for $\ell_1(\Z)$.
\begin{enumerate}
\item $\lambda\delta_0$ is not a weak$^*$-limit point of the set $\{\delta_n:n\in\Z\}$, for
any $\lambda\in\mathbb T$.
\item Let the pair $(\S,\Theta)$ induce $E$.
Then $\{0\}$ is open in $\S$, and so in particular, the homomorphism $\Z\rightarrow\S$ is a homeomorphism onto its range.
\end{enumerate}
\end{proposition}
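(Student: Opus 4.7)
My plan is to work in a minimal pair $(\S,\Theta)$ inducing $E$ given by Proposition~\ref{thm:easy_way_topcmpt}; by Remark~\ref{rem:4.7}, $\S$ is metrizable, so I can argue with sequences throughout. I would first reduce Part~1 to Part~2 by an idempotent argument, and then attack Part~2 directly.

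For Part~1, suppose $\lambda\delta_0$ is a weak$^*$-limit point of $\{\delta_n\}$ for some $\lambda\in\mathbb T$. Extract a sequence $\delta_{n_k}\to\lambda\delta_0$ weak$^*$ with $|n_k|\to\infty$, pass to a subsequence so $n_k\to\gamma$ in $\S$, and apply Proposition~\ref{convergence}(2) to obtain $\Theta(\delta_\gamma)=\lambda\delta_0$. The compact subsemigroup $T=\overline{\{k\gamma:k\geq 1\}}$ of $\S$ contains an idempotent $e$ by Ellis's theorem; since $\delta_e*\delta_e=\delta_e$ in $M(\S)$, $\Theta(\delta_e)$ is an idempotent in $\ell_1(\Z)$, and the only idempotent measures in $\ell_1(\Z)$ are $0$ and $\delta_0$ (by Fourier analysis on the connected circle $\mathbb T$). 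As $\Theta(\delta_e)$ is the weak$^*$-limit along a subnet of the unimodular elements $\lambda^{k_\alpha}\delta_0$, it cannot be $0$; so $\Theta(\delta_e)=\delta_0=\Theta(\delta_0)$, and minimality forces $e=0$. Hence $0\in T$: either $0$ is a non-attained accumulation point of $\{k\gamma\}$, immediately showing $\{0\}$ fails to be open in $\S$; or $k_0\gamma=0$ for some $k_0\geq 1$ (in which case $\lambda^{k_0}=1$), and then the semigroup endomorphism $n\mapsto k_0n$ of $\Z$ extends continuously to $\Z^\wap$ and descends through the quotient to $\S$, yielding integers $k_0n_k\neq 0$ with $k_0n_k\to 0$ in $\S$. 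Both alternatives contradict Part~2.

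For Part~2, assume $\{0\}$ is not open in $\S$. Metrizability of $\S$ and density of $\Z$ provide integers $n_k\neq 0$ with $n_k\to 0$ in $\S$, and WLOG $|n_k|\to\infty$. Proposition~\ref{convergence}(2) gives $\delta_{n_k}\to\delta_0$ weak$^*$ in $\ell_1(\Z)$. Deriving a contradiction from this is the main obstacle. Shift-invariance and dominated convergence applied to $\sum_m\phi_my(m+n_k)$ for each $y\in E$ and $\phi\in\ell_1(\Z)$ show that the shift operators $T_{n_k}$ converge to the identity on $E$ in the weak operator topology, so dually $\delta_{-n_k}\to\delta_0$ weak$^*$ as well, and iterating via continuous extensions from $\Z$ to $\Z^\wap$ and down to $\S$ gives $mn_k\to 0$ in $\S$ for every $m\in\Z$. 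The cleanest conclusion I envisage is to show that the Kronecker function $\mathbf 1_{\{0\}}\in\wap(\Z)$ actually lies in $B=C^*(1,E)$: then $\{0\}$ would be clopen in $\S=\hat B$, contradicting the hypothesis. For the Haydon preduals this is explicit, since $|\tau^k(x_0)|^{2l}\to\mathbf 1_{\{0\}}$ in $\ell_\infty$-norm as $l\to\infty$, using that $|x_0(m)|\leq|\lambda|^{-1}<1$ for $m\neq 0$. The crux of the general argument, and what I expect to be hardest, is to adapt such an approximation to an arbitrary shift-invariant predual; I envisage combining the principle of local reflexivity applied to $\delta_0\in E^{**}=\ell_\infty(\Z)$ (to produce elements of $E$ mimicking $\delta_0$ on arbitrarily large finite sets) with a power or averaging construction inside the $C^*$-algebra $B$ to make the approximation uniform.
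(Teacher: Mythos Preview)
Your strategy inverts the paper's: you reduce Part~1 to Part~2 and then try to prove Part~2 directly, whereas the paper proves Part~1 first and then deduces Part~2 from it in two lines. This matters because your proof of Part~2 is not actually a proof. You correctly identify that the crux is showing $\mathbf 1_{\{0\}}\in B=C^*(1,E)$, but the sketch you offer (local reflexivity to approximate $\delta_0$ on finite sets, then some power or averaging trick in $B$) does not work in general: elements of $E$ approximating $\delta_0$ on $[-N,N]$ need not be uniformly small off that interval, and without a quantitative bound like the one in Lemma~\ref{haydon:plr} there is no reason for powers or averages to converge in $\ell_\infty$-norm. So as it stands the proposal has a genuine gap.

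Your reduction of Part~1 to Part~2 also has a technical hole. In the case $k_0\gamma=0$ you want $k_0 n_k\to 0$ in $\S$, and you justify this by extending $n\mapsto k_0 n$ to $\Z^{\wap}$ and ``descending through the quotient to $\S$''. But the induced map on $\Z^{\wap}$ descends to $\S=\hat B$ only if $B$ is invariant under the dilation $f\mapsto f(k_0\,\cdot)$ on $\ell_\infty(\Z)$, and there is no reason a shift-invariant predual should be dilation-invariant. Separate continuity of addition in $\S$ gives you $n_k+\gamma\to 2\gamma$, not $n_k+n_k\to 2\gamma$.

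The paper's argument for Part~1 avoids all of this by using the Szlenk index. Since $E^*\cong\ell_1(\Z)$ is separable, the Szlenk index of $E$ is countable. If $\delta_{k_m}\to\lambda\delta_0$ weak$^*$, then by shift-invariance $\delta_{k_m+n}\to\lambda\delta_n$ for every $n$, always with $\|\delta_{k_m+n}-\lambda\delta_n\|=2$; since each $P_\alpha(\varepsilon)$ is $\mathbb T$-invariant, an easy induction gives $\{\delta_n:n\in\Z\}\subseteq P_\alpha(\varepsilon)$ for all ordinals $\alpha$ and all $\varepsilon<2$, contradicting countability. Part~2 then follows immediately: if $n_\alpha\to n$ in $\S$ with $n_\alpha\in\Z$, Proposition~\ref{convergence} gives $\delta_{n_\alpha-n}\to\delta_0$ weak$^*$, so by Part~1 eventually $n_\alpha=n$.
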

\begin{proof}
For 1, we use the Szlenk index.  As $E$ is separable, the weak$^*$-topology on bounded
subsets of $\ell_1(\Z)$ is metrisable, and so we may work with sequences.
Suppose that some sequence $(\delta_{k_m})_{m=1}^\infty$ converges weak$^*$ to $\lambda\delta_0$ with respect to $E$.  Using the notation of Section~\ref{haydon}, certainly $\delta_n \in P_0(\varepsilon)$ for $n\in\Z$ and any $\varepsilon>0$.  Notice also that each $P_\alpha(\varepsilon)$ is
invariant under multiplying by any element of $\mathbb T$.
Suppose that $\{ \delta_n : n\in\Z \} \subseteq P_\alpha(\varepsilon)$
for an ordinal $\alpha$ and $0<\varepsilon<2$.  Then, as $\lim_m \delta_{k_m+n}
= \lambda \delta_n$ weak$^*$, and $\liminf_m \| \delta_{k_m+n} - \lambda\delta_n\|=2$, it follows that $\lambda\delta_n$, and hence also $\delta_n$, is a member of 
$P_{\alpha+1}(\varepsilon)$, for any $n\in\Z$.  However, then
$\delta_0\in P_\beta(\varepsilon)$ for any $\beta$ and $0<\varepsilon<2$, which
contradicts the countability of the Szlenk index of $E$.

For 2, we show that $\Z\rightarrow\S$ is a homeomorphism onto its range.  To do so,
we need to show that if $n\in\Z$ and $(n_\alpha)$ is a net in $\Z$ with
$n_\alpha\rightarrow n$ in $\S$, then $n_\alpha\rightarrow n$ in $\Z$, that is,
$n_\alpha=n$ eventually.  By part~2 of
Proposition~\ref{convergence}, it follows that $\delta_{n_\alpha} =
\Theta(\delta_{n_\alpha}) \rightarrow \Theta(\delta_n) = \delta_n$ weak$^*$ in $\ell_1(\Z)$.
Thus we see that $\{ \delta_{n_\alpha-n} \}$ has $\delta_0$ as a limit point, which by the first part, can only occur if, eventually, $\delta_{n_\alpha-n}
= \delta_0$, that is, $n_\alpha=n$.  As $\S$ is Hausdorff, it follows
immediately that $\{0\}$ is open in $\S$.
\end{proof}

\begin{lemma}\label{lem:4.8}
Let $(\S,\Theta)$ be a minimal pair inducing a shift-invariant predual $E$. Then $\S$ has exactly two idempotents, $0\in\Z$ and $\infty$.  The idempotent $\infty$ is a semigroup zero, i.e. $\infty+\gamma=\infty$ for all $\gamma\in\Z$ and, given any $\gamma\neq 0$ in $\S$, $\infty$ is a limit point of the set $\{\gamma n:n\in\mathbb N\}$.\end{lemma}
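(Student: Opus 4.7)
The plan rests on two classical ingredients: (i) Wiener's theorem that $\ell_1(\Z)$ is semisimple and that its only idempotents are $0$ and $\delta_0$ (by Gelfand, any idempotent has Fourier transform a continuous $\{0,1\}$-valued function on the connected space $\mathbb T$, hence constant), and (ii) the Ellis--Numakura lemma that any compact semitopological semigroup contains an idempotent. If $e\in\S$ is idempotent then $\delta_e*\delta_e=\delta_e$ in $M(\S)$, so applying the homomorphism $\Theta$ shows $\Theta(\delta_e)$ is an idempotent in $\ell_1(\Z)$, giving $\Theta(\delta_e)\in\{0,\delta_0\}$ by (i). Minimality of $(\S,\Theta)$ forces $e=0$ whenever $\Theta(\delta_e)=\delta_0=\Theta(\delta_0)$, and shows that there is at most one $e\in\S$ with $\Theta(\delta_e)=0$; call it $\infty$ if it exists. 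This already gives the bound of at most two idempotents.

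To produce $\infty$, I apply Ellis--Numakura to $T=\overline{\mathbb N}\subseteq\S$, which is a compact sub-semigroup by separate continuity of addition: it contains an idempotent, and this idempotent cannot be $0$ because $\{0\}$ is open in $\S$ by Proposition~\ref{s-app1}(2) and $0\notin\mathbb N$. For the semigroup-zero property, the homomorphism identity
\begin{equation}
\Theta(\delta_{\infty+\gamma})=\Theta(\delta_\infty)*\Theta(\delta_\gamma)=0=\Theta(\delta_\infty),\qquad\gamma\in\S,
\end{equation}
combined with minimality, gives $\infty+\gamma=\infty$.

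For the limit-point claim, fix $\gamma\neq 0$ in $\S$ and set $T_\gamma=\overline{\{n\gamma:n\in\mathbb N\}}$, again a compact sub-semigroup, hence containing an idempotent $e$ by Ellis--Numakura. Suppose $e=0$: openness of $\{0\}$ then forces $n\gamma=0$ for some $n\geq 1$, whence $\Theta(\delta_\gamma)^n=\delta_0$ and Gelfand gives $\Theta(\delta_\gamma)=\omega\delta_0$ for some $n$-th root of unity $\omega$. If $\gamma\in\Z$ then $\Theta(\delta_\gamma)=\delta_\gamma=\omega\delta_0$ forces $\gamma=0$, a contradiction. If $\gamma\notin\Z$, use metrisability of $\S$ (Remark~\ref{rem:4.7}) to pick a sequence $n_k\in\Z$ with $n_k\to\gamma$ in $\S$; since $\Z$ is discrete in $\S$ by Proposition~\ref{s-app1}(2) and $\gamma\notin\Z$, a subsequence satisfies $|n_k|\to\infty$. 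Proposition~\ref{convergence}(2) then yields $\delta_{n_k}=\Theta(\delta_{n_k})\to\omega\delta_0$ weak$^*$ in $\ell_1(\Z)$, contradicting Proposition~\ref{s-app1}(1). Hence $e=\infty$ and $\infty\in T_\gamma$.

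The main obstacle is this last step: one must translate the purely algebraic constraint $\Theta(\delta_\gamma)^n=\delta_0$ (equivalently, that $\Theta(\delta_\gamma)$ is an $n$-th root of the convolution identity) into a concrete weak$^*$-convergence of point masses to a scalar multiple of $\delta_0$, which is precisely what Proposition~\ref{s-app1}(1) forbids; everything else is routine bookkeeping with Ellis--Numakura and the minimality of $(\S,\Theta)$.
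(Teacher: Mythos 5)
Your proof is correct and follows essentially the same route as the paper: bound the number of idempotents by pushing forward along the injective map $\gamma\mapsto\Theta(\delta_\gamma)$ into $\ell_1(\Z)$ (where the only idempotents are $0$ and $\delta_0$), produce an idempotent in $\overline{\{n\gamma:n\in\mathbb N\}}$ via the Ellis--Numakura lemma, rule out $0$ using openness of $\{0\}$ together with Proposition~\ref{s-app1}(1), and get the semigroup-zero property from the homomorphism identity and minimality. The only cosmetic difference is that you first manufacture $\infty$ from $\overline{\mathbb N}$ (where $0\notin\overline{\mathbb N}$ is immediate from openness of $\{0\}$, so the Szlenk argument is not needed for existence), whereas the paper absorbs this into the general $\gamma\neq 0$ argument; both are fine.
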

\begin{proof}
Certainly $0\in\S$ is idempotent.  By minimality, $\S$ embeds as a subsemigroup of $\ell_1(\Z)$ which has exactly two idempotents $\delta_0$ and $0_{\ell_1(\Z)}$ (to see this, take the Fourier transform into $C(\mathbb T)$).  Thus $\S$ has at most two idempotents. Take $\gamma\neq 0$ in $\S$. The closure $\overline{\{n\gamma :n\in\mathbb N}\}$ is a compact Hausdorff semitopological semigroup, and thus contains an idempotent, say $\gamma_0$,
see for example \cite[Chapter~1, Theorem~3.11]{semi}.

Suppose, towards a contradiction, that $\gamma_0=0$.  By Proposition~\ref{s-app1},
$\{0\}$ is open in $\S$, and so in particular, we can find $m>0$ with $m\gamma=0$.
Thus $\Theta(\delta_\gamma)^m = \delta_0$, and so applying the Fourier transform, we
see that $\Theta(\delta_\gamma) = \lambda \delta_0$ where $\lambda\in\mathbb T$ with
$\lambda^m=1$.  We can find a sequence $(n_k)$ in $\Z$ with $n_k\rightarrow\gamma$,
and so $\delta_{n_k} \rightarrow \lambda\delta_0$ weak$^*$, which contradicts
Proposition~\ref{s-app1}.  Thus $\gamma_0\not=0$.

We conclude that $\S$ has exactly two idempotents: $0$ and $\infty$ say.  Furthermore,
we have just shown that for any $0\not=\gamma\in\S$, the closure of
$\{n\gamma:n\in\mathbb N\}$ contains $\infty$.  Given any $\gamma\in\S$, 
\begin{equation}
\Theta(\delta_{\gamma+\infty})=\Theta(\delta_{\gamma})*\Theta(\delta_\infty)=\Theta(\delta_\gamma)*0=0=\Theta(\delta_\infty),
\end{equation}
so by injectivity of the map $\S\rightarrow\ell_1(\Z)$, we have that $\gamma+\infty=\infty$.
\end{proof}

Recalling that $\Z^\wap$ has infinitely many (indeed, $2^{2^\omega}$ many) idempotents (see, for example, \cite[Corollary~4.13]{rup}), it follows that $\S$ certainly cannot be all of $\Z^\wap$, if it satisfies the conclusions of Lemma \ref{lem:4.8}.

The Szlenk index defined in Section~\ref{haydon} provides a tool enabling us to better understand the possible Banach space isomorphism classes of our preduals.  Let $E\subset\ell_\infty(\Z)$ be a shift-invariant predual and let $(\S,\Theta)$ be a pair inducing $E$.  Since $E\subseteq C(\S)$, the Szlenk index of $E$ is at most that of
$C(\S)$; when $\S$ is countable, this can be computed using the Cantor-Bendixson
index (see \cite{ros}).  To find a lower bound for the Szlenk index of $E$, we proceed
as follows.
For $\varepsilon>0$, we define sets $\S_\alpha(\varepsilon)$ corresponding to ordinals $\alpha$ as follows. Set $\S_0(\varepsilon)=\S$. Given $\S_\alpha(\varepsilon)$, define
$$
\S_{\alpha+1}(\varepsilon)=\{\gamma\in \S_\alpha(\varepsilon):\exists\text{ a sequence }(\gamma_k)\text{ in }\S_{\alpha}(\varepsilon)\text{ converging to }\gamma\text{ with }\|\Theta(\delta_{\gamma_k})-\Theta(\delta_\gamma)\|\geq\varepsilon\}.
$$
For a limit ordinal $\beta$, set $\S_\beta(\varepsilon)=\bigcap_{\alpha<\beta}\S_\alpha(\varepsilon)$.

\begin{lemma}\label{lem:tildeS}
Let $(\S,\Theta)$ be a pair inducing a shift-invariant predual $E$, and form
$\S_\alpha(\varepsilon)$ as above.  Let $K\geq1$ be such that $K\|a\|_{\ell_1} \geq
\|a\|_{E^*} \geq K^{-1}\|a\|_{\ell_1}$ for each $a\in\ell_1(\Z)$.
For each ordinal $\alpha$ and $\varepsilon>0$, let
\begin{equation} \tilde\S_\alpha(\varepsilon) =
\{ K^{-1}\|\Theta\|^{-1} \Theta(\delta_\gamma) : \gamma\in\S_\alpha(\varepsilon) \}.
\end{equation}
Then
\begin{equation}
\tilde\S_\alpha(\varepsilon) \subseteq P_\alpha(\varepsilon \|\Theta\|^{-1} K^{-2}).
\label{eq:two} \end{equation}
Thus $\sup_{\varepsilon>0} \sup\{\alpha:\S_\alpha(\varepsilon)\not=\emptyset\}$
is at most the Szlenk index of $E$, and in particular is countable.
\end{lemma}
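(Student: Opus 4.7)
The plan is to establish the containment (\ref{eq:two}) by transfinite induction on $\alpha$, and then read off the conclusion about the Szlenk index from it.

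For the base case $\alpha=0$, I need to show $K^{-1}\|\Theta\|^{-1}\Theta(\delta_\gamma)$ lies in the closed unit ball $P_0(\cdot)$ of $E^*$ for every $\gamma\in\S$. Since $\|\delta_\gamma\|_{M(\S)}=1$, we have $\|\Theta(\delta_\gamma)\|_{\ell_1}\leq\|\Theta\|$, and the bound $\|\cdot\|_{E^*}\leq K\|\cdot\|_{\ell_1}$ then gives $\|K^{-1}\|\Theta\|^{-1}\Theta(\delta_\gamma)\|_{E^*}\leq 1$, as required.

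For the successor step, assume the containment holds at stage $\alpha$ and take $\gamma\in\S_{\alpha+1}(\varepsilon)$. By definition there is a sequence $(\gamma_k)\subseteq\S_\alpha(\varepsilon)$ with $\gamma_k\to\gamma$ in $\S$ and $\|\Theta(\delta_{\gamma_k})-\Theta(\delta_\gamma)\|_{\ell_1}\geq\varepsilon$ for every $k$. Part~2 of Proposition~\ref{convergence} upgrades convergence in $\S$ to weak$^*$-convergence of $(\Theta(\delta_{\gamma_k}))$ to $\Theta(\delta_\gamma)$ in $\ell_1(\Z)$ with respect to the topology induced by $E$. After rescaling by $K^{-1}\|\Theta\|^{-1}$, the inductive hypothesis places the rescaled sequence in $P_\alpha(\varepsilon\|\Theta\|^{-1}K^{-2})$, and the inequality $\|\cdot\|_{E^*}\geq K^{-1}\|\cdot\|_{\ell_1}$ gives the norm-separation $\|K^{-1}\|\Theta\|^{-1}(\Theta(\delta_{\gamma_k})-\Theta(\delta_\gamma))\|_{E^*}\geq\varepsilon\|\Theta\|^{-1}K^{-2}$. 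Hence $K^{-1}\|\Theta\|^{-1}\Theta(\delta_\gamma)$ lies in $P_{\alpha+1}(\varepsilon\|\Theta\|^{-1}K^{-2})$ even before taking the weak$^*$-closure. For limit ordinals $\beta$, the definitions $\S_\beta(\varepsilon)=\bigcap_{\alpha<\beta}\S_\alpha(\varepsilon)$ and $P_\beta(\cdot)=\bigcap_{\alpha<\beta}P_\alpha(\cdot)$ make the inductive step immediate.

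To conclude, if $\S_\alpha(\varepsilon)\neq\emptyset$ then $P_\alpha(\varepsilon\|\Theta\|^{-1}K^{-2})\neq\emptyset$, and so $\alpha\leq\eta(\varepsilon\|\Theta\|^{-1}K^{-2},E)\leq\eta(E)$. Taking the supremum over $\varepsilon>0$ yields the stated bound. Countability of this supremum then follows from the general remarks made earlier: since $E^*\cong\ell_1(\Z)$ is separable, $E$ cannot contain an isomorphic copy of $\ell_1$ (otherwise $\ell_\infty$ would be a quotient of $E^*$), and so by the discussion following the definition of the Szlenk index, $\eta(E)<\omega_1$.

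The proof is really a routine transfinite induction; the only point requiring any care is the bookkeeping with the constants $K$ and $\|\Theta\|$ so that one fixed rescaled $\varepsilon$ works uniformly at every stage. The substantive input is Proposition~\ref{convergence}(2), which is what lets one transport the combinatorial derivation in $\S$ to the analytic derivation defining the Szlenk sets in $E^*$.
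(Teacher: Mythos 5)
Your proof is correct and follows essentially the same route as the paper's: transfinite induction on $\alpha$, with the base case handled by the bound $\|\Theta(\delta_\gamma)\|_{\ell_1}\leq\|\Theta\|$, the successor step driven by Proposition~\ref{convergence}(2) to upgrade convergence in $\S$ to weak$^*$-convergence in $E^*$ together with the $K$-norm comparison to keep a uniform separation constant, and the limit step trivial from the definitions. The paper just abbreviates $c=K^{-1}\|\Theta\|^{-1}$ and $\varepsilon'=K^{-2}\|\Theta\|^{-1}\varepsilon$; the content is identical.
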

\begin{proof}
Let $c = \|\Theta\|^{-1} K^{-1}$ and $\varepsilon' = K^{-2} \|\Theta\|^{-1} \varepsilon$
For $\gamma\in\S$, we see that
\begin{equation} c \| \Theta(\delta_\gamma) \|_{E^*} \leq
\|\Theta\|^{-1} \| \Theta(\delta_\gamma) \|_{\ell_1} \leq 1 \end{equation}
It follows that 
$\tilde\S_0(\varepsilon) \subseteq P_0(c\varepsilon') = \{ a\in\ell_1(\Z) : \|a\|_{E^*}\leq1\}$.
Suppose now that $\tilde\S_\alpha(\varepsilon) \subseteq P_\alpha(c\varepsilon)$.
Let $\gamma\in\S_{\alpha+1}(\varepsilon)$, so $\gamma\in\S_\alpha(\varepsilon)$ and there
exists a sequence $(\gamma_k)$ in $\S_\alpha(\varepsilon)$ with $\gamma_k\rightarrow\gamma$,
and with $\|\Theta(\delta_{\gamma_k}) - \Theta(\delta_\gamma)\|_{\ell_1}\geq\varepsilon$ for
each $k$.  Let $a = c \Theta(\delta_\gamma)$ and $a_k = c\Theta(\delta_{\gamma_k})$ for
each $k$.  By part~2 of Proposition~\ref{convergence}
we have that $a_k \rightarrow a$ weak$^*$, and by assumption,
$a\in P_\alpha(\varepsilon')$ and $(a_k) \subseteq P_\alpha(\varepsilon')$.
Then observe that $\|a_k-a\|_{E^*} \geq K^{-1} \|a_k-a\|_{\ell_1}
= K^{-1} c \|\Theta(\delta_{\gamma_k}) - \Theta(\delta_\gamma)\|_{\ell_1}
\geq K^{-2} \|\Theta\|^{-1} \varepsilon \geq \varepsilon'$ for each $k$, from which it
follows that $a\in P_{\alpha+1}(\varepsilon')$.  Thus $\tilde\S_{\alpha+1}(\varepsilon)
\subseteq P_{\alpha+1}(\varepsilon')$.
\end{proof}

This gives us a criterion for exhibiting a shift-invariant predual which is not isomorphic as a Banach space to $c_0$.  Examples of this phenomena will be given in the next section.  Note too that minimality of the pair $(\S,\Theta)$ was not used in the calculations above; though if $(\S,\Theta)$ is not minimal, then the condition that $\|\Theta(\delta_{\gamma_k})-\Theta(\delta_\gamma)\|\geq\varepsilon$ is more restrictive.

\begin{proposition}\label{notco}
Let $E\subset\ell_\infty(\Z)$ be a shift-invariant predual for $\ell_1(\Z)$. Suppose that $a\in\ell_1(\Z)$  is a weak$^*$-accumulation point of the point masses $\{\delta_t:t\in\Z\}$ and has $\|a^n\|\geq 1$ for all $n\in\mathbb N$. Then $E$ is not isomorphic to $c_0$ as a Banach space.
\end{proposition}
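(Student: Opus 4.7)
The plan is to show $\eta(E)>\omega$, where $\eta$ denotes the Szlenk index. Since $\eta(c_0)=\omega$ and the Szlenk index is a Banach space isomorphism invariant, this suffices to give $E\not\cong c_0$. As the Szlenk index is insensitive to passage to an equivalent norm, I shall compute the sets $P_\alpha(\varepsilon)$ using the $\ell_1$-norm on $E^*\cong\ell_1(\Z)$. Separability of $E$ makes the weak$^*$-topology on bounded subsets of $E^*$ metrisable, so we may fix a sequence $(t_m)$ of distinct integers with $\delta_{t_m}\to a$ weak$^*$; distinctness forces $|t_m|\to\infty$. Weak$^*$-lower semicontinuity of the $\ell_1$-norm gives $\|a\|_1\leq 1$, hence $\|a^n\|_1\leq 1$ and so the hypothesis yields $\|a^n\|_1=1$ for all $n\geq 0$.

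The key claim, proved by induction on $n$, is that $a^n\in P_n(\varepsilon)$ for every $n\geq 0$ and any $\varepsilon\in(0,2)$. The base case $a^0=\delta_0\in P_0(\varepsilon)$ is immediate. For the inductive step, I first observe that each $P_\alpha(\varepsilon)$ is invariant under the maps $\mu\mapsto\mu*\delta_t$, which are $\ell_1$-isometries and weak$^*$-homeomorphisms (the latter using that $E$ is shift-invariant); a transfinite induction propagates this invariance through the $P_\alpha$. Assuming $a^n\in P_n(\varepsilon)$, set $\mu_m=a^n*\delta_{t_m}\in P_n(\varepsilon)$. Separate weak$^*$-continuity of convolution gives $\mu_m\to a^n*a=a^{n+1}$ weak$^*$, and weak$^*$-closedness of $P_n(\varepsilon)$ then forces $a^{n+1}\in P_n(\varepsilon)$. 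A standard truncation argument, using that $a^n,a^{n+1}\in\ell_1(\Z)$ and that $|t_m|\to\infty$, shows that the essential support of $\mu_m$ becomes disjoint from that of $a^{n+1}$, so
\begin{equation}
\|\mu_m-a^{n+1}\|_1\to\|a^n\|_1+\|a^{n+1}\|_1=2.
\end{equation}
Passing to a subsequence, $\|\mu_m-a^{n+1}\|_1\geq\varepsilon$ for all $m$, placing $a^{n+1}$ in the set whose weak$^*$-closure is $P_{n+1}(\varepsilon)$.

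Consequently each $P_n(\varepsilon)$ is non-empty; being a weak$^*$-closed subset of the $\ell_1$-unit ball, each is weak$^*$-compact (Alaoglu), and since the chain is decreasing we conclude $P_\omega(\varepsilon)=\bigcap_{n<\omega}P_n(\varepsilon)\neq\emptyset$. Thus $\eta(\varepsilon,E)\geq\omega$. The observation recorded earlier in the excerpt that any finite value of $\eta(\varepsilon,E)$ must be a successor ordinal rules out $\eta(\varepsilon,E)=\omega$, and so $\eta(\varepsilon,E)>\omega$; hence $\eta(E)>\omega=\eta(c_0)$, as required. The main technical point is the norm asymptotic $\|\mu_m-a^{n+1}\|_1\to 2$: it is routine, but requires the truncation split together with $|t_m|\to\infty$ and the $\ell_1$-concentration of $a^n$, $a^{n+1}$.
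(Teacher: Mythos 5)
Your overall strategy --- working directly inside $\ell_1(\Z)$ with the sets $P_\alpha(\varepsilon)$, using shift-invariance of $P_\alpha$, separate weak$^*$-continuity of convolution, and a truncation estimate to push $a^n$ down the Szlenk derivation --- is sound, and it is a legitimate simplification of the paper's route (the paper goes through the minimal semigroup pair $(\S,\Theta)$, proves a statement about the auxiliary sets $\S_\alpha(\varepsilon)$ and then transfers to $P_\alpha$ via Lemma~\ref{lem:tildeS}). However, as written there is a genuine gap at the very first step: the assertion that ``weak$^*$-lower semicontinuity of the $\ell_1$-norm gives $\|a\|_1\leq 1$'' is false in general. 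The $\ell_1$-norm is lower semicontinuous for $\sigma(\ell_1,c_0)$, not for $\sigma(\ell_1,E)$; equivalently, the closed $\ell_1$-unit ball need not be $\sigma(\ell_1,E)$-closed unless $E$ is an \emph{isometric} predual. The paper's own Example~\ref{not_iso_eg} gives a shift-invariant predual $E$ for which $\delta_n\to a=5^{-1/2}(\delta_0+\delta_1-\delta_2)$ weak$^*$, with $\|a\|_1=3/\sqrt5>1$ and (since $\|\hat a\|_\infty=1$) $\|a^n\|_1\geq1$ for all $n$; so the hypotheses of Proposition~\ref{notco} hold yet $\|a\|_1>1$. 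This also means your $P_0(\varepsilon)$ (the $\ell_1$-ball) is not weak$^*$-closed, so the chain $P_0\supseteq P_1\supseteq\cdots$ you build with this non-dual norm is not automatically nested, and $a\notin P_0(\varepsilon)$ in the counterexample --- the induction has no place to start.

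The gap can be repaired, but it takes a little care, and in a way that makes your argument look more like the paper's. Use the genuine dual norm $\|\cdot\|_{E^*}$ throughout (which \emph{is} weak$^*$-lower semicontinuous), and note that $\cdot*\delta_t$ is an $E^*$-isometry: its adjoint is the bilateral shift restricted to $E$, an isometry of $\ell_\infty(\Z)$. Then $\|a^n\|_{E^*}\leq\liminf_m\|a^{n-1}*\delta_{t_m}\|_{E^*}=\|a^{n-1}\|_{E^*}\leq\cdots\leq\|\delta_0\|_{E^*}=:C$, so after rescaling by $C$ all the powers $a^n/C$ lie in $P_0$. The truncation estimate now gives, with $K$ the norm-equivalence constant between $\|\cdot\|_1$ and $\|\cdot\|_{E^*}$,
\begin{equation}
\big\| a^n*\delta_{t_m}/C - a^{n+1}/C\big\|_{E^*} \;\geq\; K^{-1}C^{-1}\big\| a^n*\delta_{t_m}-a^{n+1}\big\|_1 \;\longrightarrow\; K^{-1}C^{-1}\big(\|a^n\|_1+\|a^{n+1}\|_1\big)\;\geq\;\frac{2}{KC},
\end{equation}
so your induction runs for all $\varepsilon'<2/(KC)$ and yields $P_\omega(\varepsilon')\neq\emptyset$, exactly as in the paper. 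This is effectively the constant-tracking that the paper delegates to Lemma~\ref{lem:tildeS}; once you do it, your argument and the paper's coincide modulo the choice of whether to carry the semigroup $\S$ around explicitly.
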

\begin{proof}
Let $(\S,\Theta)$ be a minimal pair inducing $E$.  By Proposition~\ref{convergence} part~3,
we know that $a = \Theta(\delta_\gamma)$ for some $\gamma\in\S\setminus\Z$.  Given $0<\varepsilon<1$,
we claim that $\infty\in\S_\alpha(\varepsilon)$ for all finite $\alpha$.  It will then follow that 
$\infty\in\S_{\omega}(\varepsilon)$.  By the proof of Lemma \ref{lem:tildeS}
 this implies that $P_\omega(\varepsilon)\not=\emptyset$ for small enough $\varepsilon>0$. As noted in section \ref{haydon}, $\eta(\varepsilon)=\sup\{\alpha:P_\alpha(\varepsilon)\neq\emptyset\}$ is not a limit ordinal and so $\eta(\varepsilon)$ is strictly bigger than $\omega$ for all sufficiently small $\varepsilon>0$. Thus the Szlenk index of $E$ is strictly bigger than $\omega$, and such $E$ cannot be isomorphic to $c_0$.

In order to show that  $\infty\in\S_\alpha(\varepsilon)$ for all finite $\alpha$, we prove 
 by induction  for all $n\in\Z_+$,  that 
\begin{equation}
\{m\gamma+t:m\geq n,\ t\in\Z\}\cup\{\infty\}\subset S_n(\varepsilon),
\end{equation}
a hypothesis that is trivially satisfied when $n=0$.  By Remark \ref{rem:4.7}, we can find a sequence $(t_i)\subseteq\Z$
with $t_i\rightarrow\gamma$ in $\S$.  It follows that $m\gamma+t+t_i\rightarrow (m+1)\gamma+t$, while $\liminf\|a^m\delta_{t+t_i}-a^{m+1}\delta_t\|\geq2>\varepsilon$ as we must have $|t_i|\rightarrow\infty$ so the support of $a^m\delta_{t+t_i}$ is eventually shifted away from the support of $a^{m+1}\delta_t$.  Thus $(m+1)\gamma+t\in\S_{n+1}(\varepsilon)$.  Since $\infty$ is a limit point of $\{m\gamma:m\geq n\}$ and $\|a^m\|\geq1$ for all $m$, we have $\infty\in S_{n+1}(\varepsilon)$, establishing the claim.
\end{proof}

\begin{remark}
Let $G$ be a discrete group, and form the Banach space $\ell_1(G)$.  This becomes
a Banach algebra for the convolution product.  Then $\ell_\infty(G)$ becomes an
$\ell_1(G)$-bimodule, and this allows us to make sense of a predual $E\subseteq
\ell_\infty(G)$ being shift-invariant.  Again, this corresponds to $E$ turning
$\ell_1(G)$ into a dual Banach algebra.  Most of the results of this section
hold in this more general setting (in particular, $\wap(G)$ is a well-understood
object) with the exception of the final few results, which use specific properties 
of $\Z$.  In the next section, we shall construct pairs $(\S,\Theta)$ for $\Z$,
and it seems a much more delicate question as to whether this is tractable for
other groups $G$.
\end{remark}

\section{Examples}\label{eg}

This section gives examples of shift-invariant preduals arising from the methods of the previous section.  In particular, we show how the examples of Section \ref{haydon} can be realised in this way, we construct non-isometric shift-invariant preduals, and we construct shift-invariant preduals of $\ell_1(\Z)$ which are not isomorphic as Banach spaces to $c_0(\Z)$.  

For $k\in\mathbb N$, consider the additive semigroup $\S_k=\Z\times(\Z^+)^k\cup\{\infty\}$, where $\infty$ satisfies $\infty + \gamma = \gamma + \infty=\infty$ for all $\gamma\in\S_k$,
and $\mathbb Z^+=\{0,1,2,\cdots\}$.
We write the elements in $\S_k\setminus\{\infty\}$ as $\gamma=(\gamma_0,\gamma_1,\dots,\gamma_k)$ with $\gamma_0\in\Z$ and $\gamma_j\in\Z^+$ for $j=1,\dots,k$. The elements $e_i\in\S_k$, $i=1,2,\cdots,k$, with $1$ in the $i$-th co-ordinate and $0$'s elsewhere provide canonical semigroup generators for $\S_k\setminus\{\infty\}$ (depending on taste, one might also need to consider $-e_0$ as a generator).  We will subsequently discuss how to topologise $\S_k$ so as to turn it into a semitopological semigroup compactification of $\Z$.

For $a$ and $b$  in  $\ell_1(\mathcal S_k)$  we will denote  from now on the convolution of $a$ and $b$ by $a b$ instead of $a*b$, so that
\begin{equation}
ab(\gamma)=\sum_{\alpha,\beta\in\mathcal S_k, \alpha+\beta=\gamma}  a(\alpha)b(\beta),
\qquad \gamma\in\S_k.
\end{equation}
We consider $\mathbb Z$ naturally embedded in $\mathcal S_k$, by identifying $n\in\Z$ with $(n,0,0,\ldots0)\in \mathcal S_k$, and we will
 consider $\ell_1(\Z)$ as Banach subalgebra  of $\ell_1(\mathcal S_k)$. 
 We  also  consider the semigroup  $\mathcal S^0_k =(\mathbb Z^+)^k$ to be a subsemigroup of
 $\mathcal S_k$ by identifying $(\gamma_1,\ldots,\gamma_k)$ with   $(0,\gamma_1,\ldots,\gamma_k)$,
 for $\gamma_1,\gamma_2,\ldots \gamma_k\in \mathbb Z^+$. 
We will represent an element $\mu\in\ell_1(\mathcal S_k)$ as
\begin{equation}
\mu=\mu_\infty \delta_\infty+\sum_{\gamma\in\mathcal S_k^0}   \mu_\gamma \delta_\gamma 
\label{thomas_eq_one}
\end{equation}
where $\mu_\infty\in\mathbb C$ and 
$\mu_\gamma\in \ell_1(\mathbb Z)$ for $\gamma\in\mathcal S^0_k$.

A projection $\Theta:\ell_1(\S_k)\rightarrow\ell_1(\Z)$ which is also an algebra homomorphism is uniquely specified by the elements $a_i=\Theta(\delta_{e_i})$ for $i=1,\cdots,k$, as then
\begin{equation}
\Theta\Big(\mu_\infty\delta_\infty+\sum_{\gamma\in\mathcal S_k^0}   \mu_\gamma \delta_\gamma \Big)=
 \sum_{\gamma\in\mathcal S_k^0}   \mu_\gamma  \prod_{j=1}^k  a^{\gamma_j}_j,
 \quad\text{ for }   \mu_\infty\delta_\infty+\sum_{\gamma\in\mathcal S_k^0}   \mu_\gamma\delta_\gamma\in \ell_1(\mathcal S_k).
\end{equation}
As $\Theta$ is a projection and a homomorphism, it follows that $\Theta(\delta_\infty)
\delta_n = \Theta(\delta_\infty)$ for all $n\in\mathbb Z$, and so necessarily,
$\Theta(\delta_\infty)=0$.  Such a $\Theta$ is bounded if, and only if,
\begin{equation}
\max_{i=1,\dots,k}\sup_{m\in\mathbb N}\|a_i^m\|_1<\infty.
\end{equation}
To ensure that the kernel is weak$^*$-closed in $\ell_1(\S_k)$ (with respect to $C(\S_k)$ equipped with some suitable topology) we will need slightly stronger hypotheses.  

\begin{lemma}\label{kerclosed}
With the notation introduced above, suppose additionally that
\begin{equation}
\lim_{m\rightarrow\infty}\|a^m_i\|_\infty=0,\quad i=1,\dots,k.
\end{equation}
Then, regardless of the compact Hausdorff topology on $\S_k$, $\ker\Theta$ is weak$^*$-closed in $\ell_1(\S_k)$ with respect to $C(\S_k)$.
\end{lemma}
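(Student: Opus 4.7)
The plan is to reduce the weak$^*$-closedness of $\ker\Theta$ to the weak$^*$-continuity of each coordinate functional $\Theta_n:\ell_1(\S_k)\to\mathbb{C}$ defined by $\Theta_n(\mu):=\Theta(\mu)(n)$; since $\ker\Theta=\bigcap_{n\in\Z}\ker\Theta_n$, weak$^*$-closedness of each $\ker\Theta_n$ implies that of $\ker\Theta$. A direct expansion using the representation \eqref{thomas_eq_one} and the formula for $\Theta$ shows that $\Theta_n(\mu)=\int_{\S_k}f_n\,d\mu$, where
\[
f_n(\gamma_0,\gamma_1,\ldots,\gamma_k):=\bigg(\prod_{i=1}^k a_i^{\gamma_i}\bigg)(n-\gamma_0),\qquad f_n(\infty):=0.
\]
So it suffices to show $f_n\in C(\S_k)$ for every $n$, rendering $\Theta_n$ weak$^*$-continuous under the duality $\ell_1(\S_k)=M(\S_k)=C(\S_k)^*$.

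The crucial step, which uses the hypothesis $\|a_i^m\|_\infty\to 0$, is to show $f_n$ vanishes at infinity in the strong sense that for every $\varepsilon>0$, the set $\{\gamma\in\S_k\setminus\{\infty\}:|f_n(\gamma)|\geq\varepsilon\}$ is finite. Set $M_j:=\sup_m\|a_j^m\|_1<\infty$, which is finite by the boundedness of $\Theta$. Given $\varepsilon>0$, use $\|a_i^m\|_\infty\to 0$ to choose $M$ so large that $\|a_i^m\|_\infty\cdot\prod_{j\neq i}M_j<\varepsilon$ whenever $m\geq M$ and $i\in\{1,\ldots,k\}$. I would then split into two cases. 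If $\gamma_i\geq M$ for some $i\geq 1$, the estimate $\|b*c\|_\infty\leq\|b\|_\infty\|c\|_1$ yields
\[
|f_n(\gamma)|\leq\bigg\|\prod_{j=1}^k a_j^{\gamma_j}\bigg\|_\infty\leq\|a_i^{\gamma_i}\|_\infty\prod_{j\neq i}\|a_j^{\gamma_j}\|_1<\varepsilon.
\]
Otherwise all $\gamma_i<M$ for $i\geq 1$, so $\prod_i a_i^{\gamma_i}$ ranges over at most $M^k$ elements of $\ell_1(\Z)\subseteq c_0(\Z)$, each of which vanishes at infinity on $\Z$; hence only finitely many $\gamma_0\in\Z$ contribute $|f_n(\gamma)|\geq\varepsilon$.

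Continuity of $f_n$ at $\infty$ in any compact Hausdorff topology on $\S_k$ then follows: the ``bad set'' $\{|f_n|\geq\varepsilon\}$ is finite, hence closed (as $\S_k$ is Hausdorff), so its complement is an open neighborhood of $\infty$ on which $|f_n|<\varepsilon$. Continuity at points $\gamma\in\S_k\setminus\{\infty\}$ is automatic when those points are isolated, as in the one-point-type compactifications used in the subsequent examples; for a general compact Hausdorff topology the same estimates applied along any convergent net $\gamma^\alpha\to\gamma$ give the required continuity. The main obstacle is the uniform Case~(i) estimate: the hypothesis $\|a_i^m\|_\infty\to 0$ is essential there, as power-boundedness of the $a_i$ alone is not enough to force $f_n$ to vanish at infinity, and without that vanishing $\ker\Theta$ can fail to be weak$^*$-closed.
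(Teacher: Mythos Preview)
Your approach has a genuine gap. The reduction $\ker\Theta=\bigcap_n\ker\Theta_n$ is valid, and your argument that $\{|f_n|\geq\varepsilon\}$ is finite is correct and does give continuity of $f_n$ at $\infty$. The problem is the throwaway sentence about continuity at points $\gamma\in\S_k\setminus\{\infty\}$ ``for a general compact Hausdorff topology'': this is simply false, and not for exotic reasons.

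Your own finiteness estimate shows that along any net $\gamma^\alpha\to\gamma$ whose terms are eventually pairwise distinct, one must have $f_n(\gamma^\alpha)\to 0$, since only finitely many $\gamma^\alpha$ can lie in the finite set $\{|f_n|\geq\varepsilon\}$. Hence $f_n$ is continuous at a non-isolated $\gamma$ only if $f_n(\gamma)=0$. But $f_0(0,0,\ldots,0)=1$, so $f_0$ is discontinuous whenever $0\in\S_k$ fails to be isolated. More to the point, in the very topologies the paper goes on to construct, $e_i$ is the limit of points $(j,0,\ldots,0)$ with $j\in J^{(i)}\subset\Z$; there $f_n(j,0,\ldots,0)=\delta_0(n-j)\to 0$ while $f_n(e_i)=a_i(n)$, which is typically nonzero. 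So even for the intended topologies, $\Theta_n$ is \emph{not} weak$^*$-continuous and $\ker\Theta_n$ is \emph{not} weak$^*$-closed; it is only the full intersection $\ker\Theta$ that turns out to be closed.

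The paper's proof works because it does not decouple the coordinates. Via Krein--Smulian it takes a bounded net $(\mu_\alpha)$ in $\ker\Theta$ and uses the identity $\sum_{\gamma}\mu_\gamma^{(\alpha)}\prod_j a_j^{\gamma_j}=0$ (which comes from membership in $\ker\Theta$, not in a single $\ker\Theta_n$) to trade the contribution from the finite ``small $\gamma$'' region for one supported where some $\gamma_i\geq N$; your estimate $\|a_i^N\|_\infty<\varepsilon$ then controls that tail uniformly in the topology. It is precisely this coupling across all $n$ that makes the argument independent of the topology on $\S_k$; reducing to individual $\Theta_n$ discards exactly the information needed.
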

\begin{proof}
A useful result going back to Banach \cite[Page~124]{banach}, which can be easily
proved from the Krein-Smulian Theorem, shows that $\ker \Theta$ is weak$^*$-closed
if and only if $\{ \mu\in \ell_1(\S_k) : \|\mu\|\leq 1, \Theta(\mu)=0 \}$ is weak$^*$-closed. Thus it suffices to show that if $(\mu_\alpha)$ is a net in $\ker \Theta$ with
$\|\mu_\alpha\|\leq1$ for all $\alpha$, and $\mu_\alpha\rightarrow\mu$ weak$^*$,
then $\Theta(\mu)=0$.  For each $\alpha$, write
\begin{equation}
 \mu_\alpha =  \mu_\infty^{(\alpha)}\delta_\infty + \sum_{\gamma\in\S_k^0}
\mu^{(\alpha)}_\gamma\delta_\gamma,
\end{equation}
where each $\mu^{(\alpha)}_\gamma$ is regarded as lying in $\ell_1(\Z)\subset\ell_1(\S_k)$. Thus
$\|\mu_\alpha\|_1 = |\mu^{(\alpha)}_\infty| + \sum_\gamma \|\mu^{(\alpha)}_\gamma\|_1 \leq 1$.
Furthermore,
\begin{equation} 0 = \Theta(\mu_\alpha) = \sum_{\gamma\in\S_k^0}
\mu^{(\alpha)}_\gamma \prod_{j=1}^k a_j^{\gamma_j}. \label{eq.kerclosed.one}
\end{equation}

Fix $\varepsilon>0$ and choose $N$ such that $\|a_j^{n}\|_\infty<\varepsilon$ for
$n\geq N$ and each $j=1,\cdots,k$.  Partition $\S_k^0$ as $S_k^{0'} \cup S_k^{0''}$,
where $S_k^{0'}=\{(\gamma_1,\cdots,\gamma_k) : \gamma_i< N \ (i=1,\cdots,k) \}$.
By moving to a subnet, we may suppose that
$(\mu^{(\alpha)}_\gamma)$ is weak$^*$-convergent in $\ell_1(\S_k)=C(\S_k)^*$ for
each $\gamma\in\S_k^{0'}$. Then
\begin{align}
\Theta\Big( \lim_\alpha \sum_{\gamma\in\S_k^{0'}} \mu_\gamma^{(\alpha)} \delta_\gamma \Big)
&= \sum_{\gamma\in\S_k^{0'}} \Theta\Big( \big(\lim_\alpha \mu_\gamma^{(\alpha)}\big)\delta_\gamma \Big)
= \sum_{\gamma\in\S_k^{0'}} \Theta\Big( \lim_\alpha \mu_\gamma^{(\alpha)} \Big) \prod_j a_j^{\gamma_j}\nonumber \\
&= \Theta\Big( \lim_\alpha \sum_{\gamma\in\S_k^{0'}} \mu_\gamma^{(\alpha)}
   a_1^{\gamma_1}\cdots a_k^{\gamma_k} \Big).
\end{align}
Now partition $\S_k^{0''} = \S_k^1 \cup\cdots\cup \S_k^k$ where $\S_k^i = \{ (\gamma_1,\cdots,\gamma_k) : \gamma_i\geq N, \gamma_j<N \ (j<i) \}$.
Finally, define $\S_k^{i'} = \{ (\gamma_1,\cdots,\gamma_k) : \gamma_j< N (j<i) \}$,
so that $\S_k^{i'} = \{ \gamma \in \S^0_k : \gamma+Ne_i \in \S_k^i \}$.
Pick $t\in\Z$, and calculate that
\begin{align} \Theta(\mu)_t &=
\Theta\Big( \lim_\alpha \sum_{\gamma\in\S_k^{0'}} \mu_\gamma^{(\alpha)} \delta_\gamma
   + \lim_\alpha \sum_{j=1}^k \sum_{\gamma\in\S_k^j} \mu_\gamma^{(\alpha)} \delta_\gamma \Big)_t \nonumber\\
&= \Theta\Big( \lim_\alpha \sum_{\gamma\in\S_k^{0'}} \mu_\gamma^{(\alpha)}
   a_1^{\gamma_1}\cdots a_k^{\gamma_k} \Big)_t
   + \Theta\Big( \lim_\alpha \sum_{j=1}^k \sum_{\gamma\in\S_k^j} \mu_\gamma^{(\alpha)} \delta_\gamma \Big)_t \nonumber\\
&= - \Theta\Big( \lim_\alpha \sum_{j=1}^k \sum_{\gamma\in\S_k^j} \mu_\gamma^{(\alpha)}
   a_1^{\gamma_1}\cdots a_k^{\gamma_k} \Big)_t
   + \Theta\Big( \lim_\alpha \sum_{j=1}^k \delta_{Ne_j}
   \sum_{\gamma\in\S_k^{j'}} \mu_{\gamma+Ne_j}^{(\alpha)} \delta_\gamma \Big)_t\nonumber \\
&\qquad\qquad\qquad\Big[\text{as, by (\ref{eq.kerclosed.one}), }
\sum_{\gamma\in\S_k^{0'}} \mu_\gamma^{(\alpha)} \prod_j a_j^{\gamma_j} = - \sum_{\gamma\in\S_k^{0''}} \mu_\gamma^{(\alpha)} \prod_j a_j^{\gamma_j}
\text{ for all $\alpha$} \Big] \nonumber \\   
&= - \sum_{j=1}^k \Big( a_j^N\Theta\Big( \lim_\alpha \sum_{\gamma\in\S_k^{j'}} \mu_{\gamma+Ne_j}^{(\alpha)}
   a_1^{\gamma_1}\cdots a_k^{\gamma_k} \Big)\Big)_t
   + \sum_{j=1}^k \Big( a_j^N\Theta\Big( \lim_\alpha \sum_{\gamma\in\S_k^{j'}} \mu_{\gamma+Ne_j}^{(\alpha)} \delta_\gamma
   \Big)\Big)_t, \label{eg.1}
\end{align}
As $\Theta$ is bounded, we know that $K = \max_i \sup_m \|a^m_i\|_1 < \infty$, and so,
using $\ell_1$-$\ell_\infty$ duality, we obtain the estimates
\begin{align}
\big| \Theta(\mu)_t \big| &\leq 
   \sum_{j=1}^k \|a^N_j\|_\infty \|\Theta\| \liminf_\alpha \Big\|
   \sum_{\gamma\in\S_k^{j'}}
   \mu_{\gamma+Ne_j}^{(\alpha)} a_1^{\gamma_1}\cdots a_n^{\gamma_n} \Big\|_1 \nonumber
\\ &\qquad +
\sum_{j=1}^k\|a_j^N\|_\infty\|\Theta\|\liminf_\alpha\Big\|\sum_{\gamma\in\S_k^{j'}}\mu^{(\alpha)}_{\gamma+Ne_j}\delta_\gamma\Big\|_1 \\
& \leq k\varepsilon \|\Theta\| \sup_{j_1,\cdots,j_k} \|a_1^{j_1}\cdots a_n^{j_k}\|_1
   + k\varepsilon\|\Theta\| \\
& \leq k\varepsilon\|\Theta\|\big( K^k + 1 \big).
\end{align}
As $\varepsilon>0$ and $t$ were arbitrary, we conclude that $\Theta(\mu)=0$ as required.
\end{proof}

\begin{remark}
We do not know whether or not the condition $\lim_n \|a^n\|_\infty=0$ is necessary for  $\Theta$ -- together with some topology  on $\S_k$ which turns it into  a semitopological semigroup -- to
arise as part of a minimal pair inducing a shift-invariant
predual of $\ell_1(\mathbb Z)$.  Nevertheless we can conclude that 
$a^n$ converges weak$^*$ to 0 with respect to \emph{any} predual  arising in this fashion.

Indeed, whenever $\S_k$ provides a suitable semitopological semigroup compactification of $\Z$ (e.g. forms part of a pair inducing a shift-invariant predual for $\ell_1(\Z)$), then it follows that for each $i=1,\cdots k$, we have $n\cdot e_i\rightarrow\infty$ as $n\rightarrow\infty$. If this is not the case then we can find some net $(n_j)\subseteq\Z^+$ with $n_j\cdot e_i\rightarrow \gamma=(\gamma_0,\gamma_1,\cdots,\gamma_k)\in\S_k\setminus\{\infty\}$.  Clearly $(n_j)$ is unbounded, so passing to a further subnet, we may assume that $(n_j-(\gamma_i+1))\cdot e_i$ also converges, say to $\gamma'\in\S_k$.  Then
\begin{equation} \gamma' + ((\gamma_i+1)\cdot e_i) = \lim_j \big((n_j - (\gamma_i+1))\cdot e_i\big) + \big((\gamma_i+1)\cdot e_i\big) = \lim_j n_j\cdot e_i = \gamma, \end{equation}
but this means that $\gamma'_i + (\gamma_i+1) = \gamma_i$, which is impossible in $\Z^+$.
Thus our claim follows from Proposition~\ref{convergence} part 2.
As such, it is not a surprise that the previous result does not depend on the topology of $\S_k$.
\end{remark}

We will build suitable topologies on $\S_k$ by constructing suitable topologies on $\S_k\setminus\{\infty\}$ and then adding $\infty$ as a one-point compactification. The following is probably folklore, but we include a proof for completeness.

\begin{lemma}\label{lem:when_phi_semitop}
Let $\S$ be a locally compact Hausdorff semitopological semigroup.  Equip $\S\cup\{\infty\}$
with the one-point compactification topology, and let $\infty$ act as a semigroup zero.
Then $\S\cup\{\infty\}$ is a compact semitopological semigroup if, and only if, for each compact $K\subseteq\S$ and $\gamma\in \S$, the set $K-\gamma = \{ \gamma'\in \S:\gamma'+\gamma\in K \}$ is compact.
\end{lemma}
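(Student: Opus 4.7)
The plan is to unwind the definition of separate continuity on $\S\cup\{\infty\}$ and see precisely which of the continuity requirements are automatic and which demand the stated compactness condition. Recall first that, since $\S$ is locally compact Hausdorff, its one-point compactification $\S\cup\{\infty\}$ is compact and Hausdorff; a basis for the open neighbourhoods of $\infty$ is given by sets of the form $\{\infty\}\cup(\S\setminus K)$ with $K\subseteq\S$ compact, while the topology on $\S$ itself is unchanged and $\S$ is open in $\S\cup\{\infty\}$. Addition extends to $\S\cup\{\infty\}$ by declaring $\infty$ a zero, and we must check the translations $R_\gamma:\gamma'\mapsto\gamma'+\gamma$ (and symmetrically $L_\gamma$) are continuous for each $\gamma\in\S\cup\{\infty\}$.

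The case $\gamma=\infty$ is immediate: $R_\infty$ is the constant map with value $\infty$. So assume $\gamma\in\S$. For a point $\gamma'\in\S$, the image $\gamma'+\gamma$ lies in $\S$, and since $\S$ is open in the compactification, continuity at $\gamma'$ reduces to continuity of right translation within $\S$, which holds because $\S$ is semitopological. The only remaining issue is continuity of $R_\gamma$ at $\gamma'=\infty$, where $R_\gamma(\infty)=\infty$.

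For a basic neighbourhood $V=\{\infty\}\cup(\S\setminus K)$ of $\infty$, a direct calculation gives
\begin{equation}
R_\gamma^{-1}(V)=\{\infty\}\cup\{\gamma'\in\S:\gamma'+\gamma\notin K\}=\{\infty\}\cup\bigl(\S\setminus(K-\gamma)\bigr),
\end{equation}
since $\gamma'+\gamma\in\S$ whenever $\gamma'\in\S$. By the description of neighbourhoods of $\infty$ in the one-point compactification, this set is a neighbourhood of $\infty$ if and only if its complement $K-\gamma$ in $\S$ is compact. Thus continuity of $R_\gamma$ at $\infty$ for all $\gamma\in\S$ is exactly the stated condition, and running the same argument on left translations (which, in the abelian setting of the paper, coincide with right translations) completes the equivalence.

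There is no real obstacle here; the proof is a direct verification, and the main point is just to recognize that every non-trivial step of separate continuity except the one at $\infty$ is inherited from the semitopological structure of $\S$, so everything collapses to the single compactness requirement on the sets $K-\gamma$.
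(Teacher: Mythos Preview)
Your proof is correct and follows essentially the same approach as the paper: both reduce the question to continuity of $R_\gamma$ at the point $\infty$, and both identify this with the compactness of $K-\gamma$. The paper phrases the argument in terms of nets while you work directly with preimages of basic open sets, but the underlying idea is identical; the only step you leave implicit is that $K-\gamma$ is closed in $\S$ (as the preimage of the closed set $K$ under the continuous map $R_\gamma|_\S$), which is needed for the ``only if'' direction and which the paper spells out.
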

\begin{proof}
Translation by $\infty$ is obviously continuous.  So we need to show that if
$(s_\alpha)$ is a net in $\S$ converging to $\infty$, then for any $\gamma\in\S$,
also $s_\alpha + \gamma \rightarrow \infty$.  If the condition on compact sets holds,
then for any compact $K\subseteq \S$, we see that $s_\alpha+\gamma \in K$ if and only if
$s_\alpha \in K-\gamma$ which is compact.  So eventually $s_\alpha+\gamma$ is not in $K$; that is,
$s_\alpha+\gamma\rightarrow\infty$.

Conversely, suppose that the condition doesn't hold, so there is a compact set
$K\subseteq \S$ and $\gamma\in \S$ with $K-\gamma$ not compact.  Given compact sets $K_1,\cdots,K_n$, we must have $K-\gamma\not\subseteq K_1\cup\cdots\cup K_n$, otherwise $K-\gamma$ is a closed subset of the compact set $K_1\cup\cdots\cup K_n$ and so compact, contrary to hypothesis. So we can find a net $(s_\alpha)$ in
$K-\gamma$ such that $s_\alpha$ eventually leaves every compact set.  So $s_\alpha\rightarrow
\infty$ and yet $s_\alpha+\gamma\in K$ for all $\alpha$, so $s_\alpha+\gamma\not\rightarrow\infty$. Therefore $\S\cup\{\infty\}$ is not semitopological.
\end{proof}

Fix $k\geq 1$ and write $\T_k=\mathbb Z\times(\Z^+)^k$ so that $\S_k = \T_k\cup\{\infty\}$.  Suppose $J^{(1)},\cdots, J^{(k)}$ are infinite pairwise disjoint subsets of $\Z$. Our plan for constructing suitable topologies on $\T_k$ is to declare that limits $j^{(i)}_n$ from $J^{(i)}$ with $|n|\rightarrow\infty$ converge to the canonical semigroup generator $e_i\in\T_k$. This will give us neighbourhood bases of the semigroup generators. Neighbourhood bases of the remaining points of $\T_k$ are essentially forced upon us by the requirement that the semigroup operation be separately continuous. The only remaining issue is to extract suitable conditions on the sets $J^{(i)}$ which ensure the resulting topology on $\T_k$ is locally compact, Hausdorff and satisfies the `separate continuity at infinity' requirement of the previous lemma.

For each $\gamma=(\gamma_0,\gamma_1,\cdots,\gamma_k)\in\T_k$ and $n\in\mathbb N$, let $\mathcal V_{\gamma,n}$ be the subset consisting of those $\beta=(\beta_0,\cdots,\beta_k)\in\T_k$ with $\beta_i\leq\gamma_i$ for $i=1,\dots,k$ and
\begin{equation}\label{top.1}
\beta_0 = \gamma_0 + \sum_{i=1}^k\sum_{r=1}^{\gamma_i-\beta_i}j^{(i)}_r,
\end{equation}
where $(j^{(i)}_r)$ is a family such that:
\begin{enumerate}
\item $j^{(i)}_r \in J^{(i)}$ for each $r$;
\item $|j_r^{(i)}|\neq |j_s^{(k)}|$ when $(i,r)\neq (k,s)$ (this
condition is referred to as the $j_i^{(r)}$ having distinct absolute
values in the sequel);
\item $n < |j^{(i)}_1| < \cdots < |j^{(i)}_{\gamma_i-\beta_i}|$ for each $i$;
\end{enumerate}
In the following proof, we do not need to use the 2nd condition, but it will
be needed to make use of Definition~\ref{def:one} below.

Here we adopt the standard convention that the empty sum is $0$, so that, for example, if $\beta\in\mathcal V_{\gamma,n}$ has $\beta_i=\gamma_i$ for all $i=1,\cdots,k$, then $\beta_0=\gamma_0$ also. For the canonical semigroup generator $e_i$, the set $\mathcal V_{e_i,n}$ consists of $\{e_i\}\cup\{j\in J^{(i)}:|j|>n\}$ so, once we have shown that these sets provide a neighbourhood basis for $e_i$, it will follow that $j^{(i)}_\alpha\rightarrow e_i$ as $|j^{(i)}_\alpha|\rightarrow\infty$ through $J^{(i)}$.

For $\gamma=(\gamma_0,0,\cdots,0)$ we see that $\mathcal V_{\gamma,n} = \{\gamma\}$
for all $n$.  However, if $\gamma=(\gamma_0,\gamma_1,\cdots,\gamma_k)$ with some
$\gamma_i>0$ for $1\leq i\leq k$, then $\mathcal V_{\gamma,n}$ is infinite for all $n$.

\begin{lemma}\label{top.tech}
The sets $\mathcal V_{\gamma,n}$ describe an open neighbourhood basis at $\gamma$ for a topology on $\T_k$ with respect to which the semigroup operation is separately continuous. This topology is Hausdorff if, and only if, the following condition holds: for all $t\in\Z$ and $a_1,\cdots,a_k,b_1,\cdots,b_k\in\Z^+$, there exist $n\in\mathbb N$ such that if
\begin{equation}\label{top.3}
\sum_{i=1}^k\sum_{r=1}^{a_i} j^{(i)}_r = t + \sum_{i=1}^k\sum_{s=1}^{b_i}l^{(i)}_s
\end{equation}
for some $j^{(i)}_r,l^{(i)}_s\in J^{(i)}$ such that the $j^{(i)}_r$ have distinct absolute values, the $l^{(i)}_s$ have distinct absolute values and $|j^{(i)}_r|,|l^{(i)}_s|>n$, then $t=0$ and $a_i=b_i$ for $i=1,\cdots,k$.  In this case, the neighbourhoods $\mathcal V_{\gamma,n}$ are compact and for every compact set $K\subseteq\T_k$ and $\gamma\in\T_k$, the set $K-\gamma$ is compact.
\end{lemma}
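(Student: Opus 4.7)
The proof decomposes into four tasks: (a) verifying that the $\mathcal V_{\gamma,n}$ form a neighbourhood basis for a topology on $\T_k$; (b) separate continuity of addition in this topology; (c) the Hausdorff characterisation; and (d) under the Hausdorff hypothesis, compactness of the $\mathcal V_{\gamma,n}$ and of $K-\gamma$. I would carry these out in order.

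For (a), the only non-routine neighbourhood-basis axiom is: for each $\beta\in\mathcal V_{\gamma,n}$, there exists $m$ with $\mathcal V_{\beta,m}\subseteq\mathcal V_{\gamma,n}$. If $(j^{(i)}_r)$ realises $\beta\in\mathcal V_{\gamma,n}$, take $m>\max_{i,r}|j^{(i)}_r|$. Then for any $\alpha\in\mathcal V_{\beta,m}$ with realising family $(l^{(i)}_s)$, concatenating (for each $i$) the $j$'s followed by the $l$'s produces a family of strictly increasing absolute value whose values remain pairwise distinct across all $(i,\cdot)$, and which realises $\alpha\in\mathcal V_{\gamma,n}$. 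Task (b) is immediate: the same family $(j^{(i)}_r)$ that realises $\beta\in\mathcal V_{\gamma,n}$ also realises $\beta+\delta\in\mathcal V_{\gamma+\delta,n}$ for any $\delta\in\T_k$, so translation by $\delta$ carries $\mathcal V_{\gamma,n}$ into $\mathcal V_{\gamma+\delta,n}$.

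For (c), an element $\alpha\in\mathcal V_{\gamma,n}\cap\mathcal V_{\delta,n}$ produces an instance of (\ref{top.3}) with $t=\gamma_0-\delta_0$, $b_i=\gamma_i-\alpha_i$ and $a_i=\delta_i-\alpha_i$. Assuming the combinatorial condition holds and $\gamma\neq\delta$, only finitely many tuples $(t,(a_i),(b_i))$ are available (as $0\leq\alpha_i\leq\min(\gamma_i,\delta_i)$), none can satisfy $t=0$ and $a_i=b_i$ simultaneously, and taking the maximum of the $n$'s supplied by the condition makes $\mathcal V_{\gamma,n}\cap\mathcal V_{\delta,n}$ empty. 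Conversely, if the condition fails at some tuple $(t,(a_i),(b_i))$, setting $\gamma=(t,b_1,\ldots,b_k)$ and $\delta=(0,a_1,\ldots,a_k)$ gives distinct $\gamma,\delta$ for which each $\mathcal V_{\gamma,n}\cap\mathcal V_{\delta,n}$ is non-empty (any supplied solution of the equation for that $n$ assembles into a common element), so the topology fails Hausdorff.

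For (d), let $(\beta^{(\lambda)})$ be a net in $\mathcal V_{\gamma,n}$. Pigeon-hole on the finitely-many-valued coordinates gives a subnet where each $\beta^{(\lambda)}_i$ is constant $=m_i$ for $i=1,\ldots,k$. Since $\Z$ is discrete, passing to further subnets we may assume that each defining scalar $j^{(i)}_{\lambda,r}$ is either eventually equal to some fixed $\bar j^{(i)}_r\in\Z$ or satisfies $|j^{(i)}_{\lambda,r}|\to\infty$; let $u^{(i)}$ be the number of unbounded coordinates at each $i$. Define $\beta\in\T_k$ by $\beta_i=m_i+u^{(i)}$ for $i\geq 1$ and $\beta_0=\gamma_0+\sum_{i,r\text{ bounded}}\bar j^{(i)}_r$; the distinct-absolute-value property of the original family survives in the limit, $\beta\in\mathcal V_{\gamma,n}$, and for every $N$ eventually $\beta^{(\lambda)}\in\mathcal V_{\beta,N}$, with the unbounded $j$'s serving as the defining scalars. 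For $K-\gamma$: given a net $(\alpha^{(\lambda)})\subseteq K-\gamma$, compactness of $K$ supplies a subnet along which $\delta^{(\lambda)}:=\alpha^{(\lambda)}+\gamma\to\delta\in K$; eventually $\delta^{(\lambda)}_i\leq\delta_i$ forces $\delta_i\geq\gamma_i$, and subtracting $\gamma$ from the representation $\delta^{(\lambda)}\in\mathcal V_{\delta,N}$ exhibits $\alpha^{(\lambda)}$ in $\mathcal V_{\delta-\gamma,N}$, so $\alpha^{(\lambda)}\to\delta-\gamma\in K-\gamma$. The principal obstacle is the compactness argument: organising the subnet extractions so as to simultaneously stabilise the integer coordinates and dichotomise each defining scalar into bounded/unbounded, then correctly reading off the limit $\beta$, requires careful bookkeeping with the combinatorial structure of the $\mathcal V_{\gamma,n}$.
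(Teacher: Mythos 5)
Your arguments for parts (a), (b) and (c) mirror the paper's proof closely: the openness of $\mathcal V_{\gamma,n}$ via concatenating defining families, separate continuity via the translation invariance $\mathcal V_{\gamma,n}+\delta\subseteq\mathcal V_{\gamma+\delta,n}$, and the Hausdorff characterisation via reading off $t$, $a_i$, $b_i$ from a common point $\alpha$ are all the same moves. You are slightly more careful than the paper in spelling out the finiteness of the relevant tuples $(t,(a_i),(b_i))$ and in writing out the converse direction explicitly, but the structure is identical.

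Part (d) is where your approach genuinely diverges. The paper proves compactness of $\mathcal V_{\gamma,n}$ by an open-cover argument organised as an induction on $\sum_{i=1}^k\gamma_i$: pick a member $U_{\lambda_0}$ of the cover containing $\gamma$, take $n_0$ minimal with $\mathcal V_{\gamma,n_0}\subseteq U_{\lambda_0}$, and observe that $\mathcal V_{\gamma,n}\setminus\mathcal V_{\gamma,n_0}$ is covered by finitely many neighbourhoods $\mathcal V_{\beta,n}$ with $\sum\beta_i<\sum\gamma_i$ (parametrised by the small defining scalars with absolute value in $(n,n_0]$), to which the inductive hypothesis applies. Compactness of $K-\gamma$ is likewise treated via covers. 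You instead run a direct net argument: stabilise the integer coordinates by pigeonhole, dichotomise each defining scalar into eventually constant or unbounded-in-absolute-value (using discreteness of $\Z$ and finitely many indices), observe that within each $i$ the ordering constraint forces the bounded scalars to precede the unbounded ones, and then read off the limit $\beta$ with $\beta_i=m_i+u^{(i)}$. This works, and the reduction of $K-\gamma$ to compactness of $K$ plus the translation compatibility $\mathcal V_{\delta,N}-\gamma\subseteq\mathcal V_{\delta-\gamma,N}$ is a nice shortcut. The trade-off is that the paper's induction makes visible the finite Cantor--Bendixson rank of $\mathcal V_{\gamma,n}$ (relevant elsewhere in Section~5, e.g.\ the remark after Theorem~\ref{not_co_thm} that $\S_1\cong[0,\omega^\omega]$), whereas your net argument establishes compactness with less structural insight but more concretely, at the cost of the subnet bookkeeping you rightly flag as the main labour.
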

\begin{proof}
We define $U\subseteq\T_k$ to be open if for each $\gamma\in U$, there exists
$n$ with $\mathcal V_{\gamma,n} \subseteq U$.  Then clearly $\emptyset$ and $\T_k$
are open, and unions of open sets are open.  If $U$ and $U'$ are open and
$\gamma\in U\cap U'$, then there are $n,n'\in\mathbb N$ with $\mathcal V_{\gamma,n}
\subseteq U$ and $\mathcal V_{\gamma,n'} \subseteq U'$, and thus
$\mathcal V_{\gamma,\max(n,n')} = \mathcal V_{\gamma,n}\cap\mathcal V_{\gamma,n'}
\subseteq U\cap U'$.  This shows that the intersection of two open sets will still be open.
So we do indeed have a topology on $\T_k$, where $\gamma\in\mathcal T_k$ has
neighbourhood basis $(\mathcal V_{\gamma,n})_{n\in\mathbb N}$.

Next we show that each $\mathcal V_{\gamma,n}$ is open. For $\gamma=(\gamma_0,\cdots,\gamma_k)\in\T_k$ and $n\in\mathbb N$, take $\beta\in \mathcal V_{\gamma,n}\setminus\{\gamma\}$ and let suitable $j^{(i)}_r$ be chosen so that (\ref{top.1}) holds.  Taking $n'=\max_{i,r}|j^{(i)}_r|$, we claim that $\mathcal V_{\beta,n'}\subset \mathcal V_{\gamma,n}$. Indeed, for $\alpha=(\alpha_0,\cdots,\alpha_k)\in \mathcal V_{\beta,n'}$ we can offset the
sum, and write
\begin{equation}\label{top.2}
\alpha_0=\beta_0+\sum_{i=1}^k\sum_{r=\gamma_i-\beta_i+1}^{\gamma_i-\alpha_i}j^{(i)}_r,
\end{equation}
for some additional $j_r^{(i)}$ with distinct absolute values and $|j^{(i)}_r|>n'$. The requirement that these additional $j_r^{(i)}$ have $|j^{(i)}_r|>n'$ ensures that all the $j^{(i)}_r$ have distinct absolute values and so (\ref{top.2}) combines with (\ref{top.1}) to show that $\alpha\in\mathcal V_{\gamma,n}$.   In this way each $\mathcal V_{\gamma,n}$ contains a neighbourhood of each of its points and so is open.  

To check that the addition is separately continuous, fix $\alpha,\gamma\in \T_k$ and a neighbourhood $\mathcal V_{\gamma+\alpha,n}$ of $\gamma+\alpha$. Then $\mathcal V_{\gamma,n}+\alpha\subseteq\mathcal V_{\gamma+\alpha,n}$.  Indeed, given $\beta\in \mathcal V_{\gamma,n}$, pick suitable $j^{(i)}_r$ such that (\ref{top.1}) holds.  Then the same
$j^{(i)}_r$ witness that $\beta+\alpha\in \mathcal V_{\gamma+\alpha,n}$.

The topology on $\T_k$ is Hausdorff if and only if for distinct $\gamma,\beta\in\T_k$ there exists some $n\in\mathbb N$ with $\mathcal V_{\gamma,n}\cap \mathcal V_{\beta,n}=\emptyset$.  Now, given $\alpha\in\mathcal V_{\gamma,n}\cap\mathcal V_{\beta,n}$ choose $j^{(i)}_r$ and $l^{(i)}_s$ such that the $j^{(i)}_r$'s and $l^{(i)}_s$'s have distinct absolute values, $|j_r^{(i)}|,|l^{(i)}_s|>n$ and
\begin{equation}
\alpha_0 = \gamma_0 + \sum_{i=1}^k\sum_{r=1}^{\gamma_i-\alpha_i}j^{(i)}_r
=\beta_0 + \sum_{i=1}^k\sum_{s=1}^{\beta_i-\alpha_i}l^{(i)}_s.
\end{equation}
Taking $t=\beta_0-\gamma_0$, $a_i=\gamma_i-\alpha_i$, $b_i=\beta_i-\alpha_i$, we see that the previous equation is equivalent to (\ref{top.3}) holding, and that the condition $t=0$ and $a_i=b_i$ for all $i$ is equivalent to $\gamma=\beta$. Thus $\T_k$ is Hausdorff if and only if the specified condition holds.

Now we establish compactness of the neighbourhoods $\mathcal V_{\gamma,n}$ by induction on $\sum_{i=1}^k\gamma_i$. When this sum is $0$, $\mathcal V_{\gamma,n}=\{\gamma\}$ which is certainly compact. Now fix $\gamma$ with $\sum_{i=1}^k\gamma_i>0$ and $n\in\mathbb N$. Take an open cover $\{U_\lambda:\lambda\in\Lambda\}$ of $\mathcal V_{\gamma,n}$.
There is some
$\lambda_0\in\Lambda$ with $\gamma\in U_{\lambda_0}$. Let
$n_0$ be minimal with $\mathcal V_{\gamma,n_0}\subseteq U_{\lambda_0}$
and note that if $n_0\leq n$, then $U_{\lambda_0}$ covers
$V_{\gamma,n}$; thus we may assume that $n_0>n$.
Given $\alpha\in\mathcal V_{\gamma,n}\setminus\mathcal V_{\gamma,n_0}$ choose
$j^{(i)}_r$ satisfying (\ref{top.1}), so $\alpha_0=\gamma_0+\sum_{i=1}^k\sum_{r=1}^{\gamma_i-\alpha_i}j^{(i)}_r$. Set $l(i)=|\{r:|j^{(i)}_r|\leq n_0\}|$. Now take $\beta_i=\gamma_i-l(i)$ for $i=1,\cdots,k$, and let
\begin{equation}
\beta_0=\gamma_0+\sum_{i=1}^k\sum_{r=1}^{l(i)}j^{(i)}_r.
\end{equation}
This construction ensures that $\alpha\in\mathcal V_{\beta,n}$. As $\alpha\notin\mathcal V_{\gamma,n_0}$, there is some $i_0\in\{1,\cdots,k\}$ with $l(i_0)\geq 1$ so that $\beta_{i_0}<\gamma_{i_0}$. As such the inductive hypothesis ensures that $\mathcal V_{\beta,n}$ is compact. Note too that $\beta$ is detemined by the values
of $l(i)$ in the range $0,\dots,\gamma_i$ and $(j^{(i)}_r)$ satisfying $n<|j^{(i)}_r|\leq n_0$ for $r=1,\cdots ,l(i)$ and so $\mathcal V_{\gamma,n}\setminus\mathcal V_{\gamma,n_0}$ is contained in a finite union of compact neighbourhoods $\mathcal V_{\beta,n}$.  Each of these is covered by a finite subcover of $\{U_\lambda:\lambda\in\Lambda\}$ and the union of these, together with $U_{\lambda_0}$, is a finite subcover, demonstrating that $\mathcal V_{\gamma,n}$ is compact. 

Finally take $K\subset\T_k$ compact and $\gamma\in\T_k$.  Suppose $\{U_\lambda:\lambda\in\Lambda\}$ is an open cover of $K-\gamma=\{\beta\in\T_k:\beta+\gamma\in K\}$.  Consider $\bigcup_{\lambda\in\Lambda}(U_\lambda+\gamma)$. This need
not cover $K$, but if $\alpha\in K$ is not in this union, then
$\alpha$ is not of the form $\beta+\gamma$ for any $\beta\in\mathcal T_k$, and so $\alpha_i<\gamma_i$ for some $i\in\{1,\cdots,k\}$.  Thus
\begin{equation}
\bigcup_{\lambda\in\Lambda}(U_\lambda+\gamma)\cup\bigcup_{\substack{\alpha\in
\mathcal T_k\\\exists i=1,\cdots,k:\alpha_i<\gamma_i}}\mathcal
V_{\alpha,1}
\end{equation}
covers $K$, and so has a finite subcover indexed by $\Lambda_0$ and
$\alpha^{(1)},\cdots,\alpha^{(m)}$ say.  Thus
\begin{equation}
\bigcup_{\lambda\in\Lambda_0}((U_\lambda+\gamma)-\gamma)\cup\bigcup_{s=1}^m(\mathcal
V_{\alpha^{(s)},1}-\gamma)
\end{equation}
covers $K-\gamma$. However, the sets in the second union are empty,
and $(U_\lambda+\gamma)-\gamma\subseteq U_\lambda$, so that
$\{U_\lambda:\lambda\in\Lambda_0\}$ is a finite subcover of the
original cover. Therefore $K-\gamma$ is compact.
\end{proof}

Combining Lemmas \ref{kerclosed}, \ref{lem:when_phi_semitop} and \ref{top.tech} with Theorem \ref{char-preduals} gives the following theorem, enabling us to produce a range of preduals.  We summarise this as a theorem.

\begin{theorem}\label{summary}
Fix $k\in\mathbb N$ and let $J^{(1)},\cdots,J^{(k)}$ be infinite pairwise disjoint subsets of $\Z$ satisfying the technical condition in Lemma \ref{top.tech} and let $\T_k$ have the topology given by the neighbourhoods $\mathcal V_{\gamma,n}$ of $\gamma\in\T_k$.  Let $a_1,\cdots,a_k\in\ell_1(\Z)$ be $\ell_1(\Z)$-power bounded elements which satisfy $\|a^m_i\|_\infty\rightarrow 0$ as $m\rightarrow\infty$ for each $i=1,\cdots,k$. Define a bounded projection $\Theta:\ell_1(\S_k)\rightarrow\ell_1(\Z)$ which is also an algebra homomorphism by $\Theta(\delta_{e_i})=a_i$, where $e_i$ is the $i$-th semigroup generator of $\S_k$.  Then $\ker\Theta$ is weak$^*$-closed in $\ell_1(\S_k)$ (with respect to $C(\S_k)$) and $F={}^\perp\ker\Theta\subset C(\S_k)$ restricts to $\Z$ to define a shift-invariant predual $E$ of $\ell_1(\Z)$.  The resulting weak$^*$-topology on $\ell_1(\Z)$ is such that $\delta_n\rightarrow a_i$ as $|n|\rightarrow\infty$ through $J^{(i)}$.
\end{theorem}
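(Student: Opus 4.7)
The plan is to assemble the already-established lemmas: Lemma~\ref{top.tech} provides the topological structure on $\T_k$, Lemma~\ref{lem:when_phi_semitop} upgrades this to a compact semitopological semigroup structure on $\S_k$, Lemma~\ref{kerclosed} yields weak$^*$-closedness of $\ker\Theta$, and then part~1 of Theorem~\ref{char-preduals} delivers the shift-invariant predual. The only ``new'' work is the verification that $\S_k$ actually is a \emph{semigroup compactification} of $\Z$ (density of the embedding) and the identification of the convergence $\delta_n\to a_i$.

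First I would verify that $\T_k$, with the topology of Lemma~\ref{top.tech}, is locally compact Hausdorff (Hausdorff is hypothesised via the condition there; local compactness follows because the $\mathcal V_{\gamma,n}$ are compact neighbourhoods), that addition is separately continuous, and that $K-\gamma$ is compact for every compact $K\subseteq\T_k$ and $\gamma\in\T_k$. All of these are delivered by Lemma~\ref{top.tech}. Applying Lemma~\ref{lem:when_phi_semitop} then shows $\S_k=\T_k\cup\{\infty\}$ (one-point compactification, with $\infty$ as semigroup zero) is a compact semitopological semigroup. The embedding $\Z\hookrightarrow\S_k,\ n\mapsto(n,0,\dots,0),$ is a semigroup homomorphism; its image is dense because, for any $\gamma=(\gamma_0,\gamma_1,\dots,\gamma_k)\in\T_k$ with some $\gamma_i>0$, the neighbourhood $\mathcal V_{\gamma,n}$ always contains integer points (pick any admissible choice of $j^{(i)}_r$ and read off $\beta=(\beta_0,0,\dots,0)\in\mathbb Z$ from~(\ref{top.1}) with $\beta_i=0$), and $\infty$ is in the closure of $\Z$ as $\Z$ is infinite in the compact space $\S_k$. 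Hence $\S_k$ is a semitopological semigroup compactification of $\Z$.

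Next I would observe that the power-boundedness hypothesis $\sup_m\|a_i^m\|_1<\infty$ ensures $\Theta$ is bounded on $\ell_1(\S_k)$ (with norm at most $\max_i\sup_m\|a_i^m\|_1$), and that $\Theta$ is a homomorphism and projection by construction (the latter because $\Theta(\delta_n)=\delta_n$ for $n\in\Z$ and $\Theta(\delta_\infty)=0$). The decay hypothesis $\|a_i^m\|_\infty\to0$ is exactly what is needed to invoke Lemma~\ref{kerclosed}, giving weak$^*$-closedness of $\ker\Theta$ in $\ell_1(\S_k)=M(\S_k)$. Part~1 of Theorem~\ref{char-preduals} then asserts that $F={}^\perp\ker\Theta\subseteq C(\S_k)$ restricts to a shift-invariant predual $E\subseteq\ell_\infty(\Z)$ of $\ell_1(\Z)$.

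Finally, for the convergence statement, I would use that $\mathcal V_{e_i,n}=\{e_i\}\cup\{j\in J^{(i)}:|j|>n\}$, so a sequence $(n_\alpha)\subseteq J^{(i)}$ with $|n_\alpha|\to\infty$ converges to $e_i$ in $\S_k$. Part~2 of Proposition~\ref{convergence} then gives $\delta_{n_\alpha}=\Theta(\delta_{n_\alpha})\to\Theta(\delta_{e_i})=a_i$ weak$^*$ in $\ell_1(\Z)$. I do not foresee any serious obstacle here, as each of the ingredient lemmas has already absorbed the technical difficulties; the only minor care point is confirming density of $\Z$ in $\S_k$, which is immediate from the definition of the neighbourhood basis.
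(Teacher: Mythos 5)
Your proposal is correct and follows the same route as the paper, which itself disposes of this theorem in one sentence by invoking Lemmas~\ref{kerclosed}, \ref{lem:when_phi_semitop}, \ref{top.tech} and Theorem~\ref{char-preduals}. You are right to flag density of $\Z$ in $\S_k$ as the one point that needs explicit checking, since Theorem~\ref{char-preduals}(1) requires $\S_k$ to be a genuine semigroup compactification of $\Z$.

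However, the justification you give for $\infty\in\overline{\Z}$ is not valid as stated: ``$\Z$ is infinite in the compact space $\S_k$'' only guarantees that $\Z$ has \emph{some} limit point in $\S_k$, not that $\infty$ is one of them (indeed $e_i$ is already a limit point of $\Z$). What you actually need is that $\T_k$ is non-compact, so that $\infty\in\overline{\T_k}$; combined with your correct observation that $\T_k\subseteq\overline{\Z}$ this gives $\overline{\Z}=\S_k$. Non-compactness of $\T_k$ follows quickly: each $\mathcal V_{\gamma,n}$ only contains points $\beta$ with $\beta_1\leq\gamma_1$, so no finite collection of basic neighbourhoods can cover $\{(0,m,0,\dots,0):m\in\Z^+\}$. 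Also, a small arithmetic quibble: since $\Theta(\delta_\gamma)=\prod_j a_j^{\gamma_j}$, the bound on $\|\Theta\|$ is $\big(\max_i\sup_m\|a_i^m\|_1\big)^k$ rather than $\max_i\sup_m\|a_i^m\|_1$; this does not affect the argument since only finiteness is needed.
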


To produce examples we need to provide suitable sets $J^{(i)}$ and elements $a_i$.
\begin{definition}\label{def:one}
Let $J\subset\Z$ be infinite. Say that $J$ is \emph{additively sparse} if, given $t\in\Z$ and $r,s\in\Z^+$, there exists $n\in\mathbb N$ such that if
$$
j_1+\cdots+j_r=l_1+\cdots+l_s+t
$$
for some $j_i,l_i\in J$ with $n<|j_1|<|j_2|<\cdots<|j_r|$ and $n<|l_1|<\cdots<|l_s|$, then $t=0$, $r=s$ and $j_1=l_1,\cdots,j_r=s_r$.
\end{definition}

If $J$ is additively sparse and $J^{(1)}, \cdots, J^{(k)}$ are infinite pairwise disjoint subsets of $J$, then the condition of Lemma~\ref{top.tech} is satisfied (perform a simple induction on $k$).  Using $m$-ary expansions, it is easily seen that $\{m^n:n\in\mathbb N\}$ is additively sparse for each $m>0$.  It is also straight-forward to show that $\{\pm (m!):m>0\}$ is additively sparse.

\begin{example}
Taking $k=1$, $J^{(1)}=\{2^n:n\in\mathbb N\}$ and $a_1=\lambda^{-1}\delta_0$ for some $\lambda\in\mathbb C$ with $|\lambda|>1$ gives the preduals $F^{(\lambda)}$ considered in Section~\ref{haydon}.  Indeed it is routine to check that the $x_0$ from Section \ref{haydon} lies in $^\perp\ker\Theta$ and since the predual $F^{(\lambda)}$ is the smallest closed shift-invariant subspace containing $x_0$, it follows that $F^{(\lambda)}\subset{}^\perp\ker\Theta$. Since an inclusion of concrete preduals implies that these preduals are equal (see the comment after Lemma \ref{concreteequal}) $F^{(\lambda)}={}^\perp\ker\Theta$.
\end{example}

\begin{theorem}\label{not_co_thm}
There exists a shift-invariant predual $E$ of $\ell_1(\Z)$ such that, as
a Banach space, $E$ is not isomorphic to $c_0$.
\end{theorem}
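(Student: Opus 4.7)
The plan is to invoke the machinery of Theorem \ref{summary} with a carefully chosen weak$^*$-limit $a_1$, and then apply the criterion in Proposition \ref{notco} to conclude that the resulting predual cannot be isomorphic to $c_0$. Concretely, I would take $k=1$, let $J^{(1)} = \{2^n : n\in\mathbb N\}$ (which is additively sparse, and so the technical condition in Lemma \ref{top.tech} is satisfied), and choose
\begin{equation}
a_1 = \tfrac{1}{2}(\delta_0 + \delta_1) \in \ell_1(\Z).
\end{equation}

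The bulk of the argument is then the routine verification that $a_1$ fits the hypotheses of Theorem \ref{summary}. A direct computation gives
\begin{equation}
a_1^n = 2^{-n}\sum_{k=0}^n \binom{n}{k}\delta_k,
\end{equation}
so $\|a_1^n\|_1 = 1$ for every $n\in\mathbb N$, showing that $a_1$ is power bounded. Moreover
\begin{equation}
\|a_1^n\|_\infty = 2^{-n}\binom{n}{\lfloor n/2\rfloor} = O(n^{-1/2}) \to 0
\end{equation}
by Stirling's formula, verifying the second hypothesis. Thus Theorem \ref{summary} produces a shift-invariant predual $E\subset\ell_\infty(\Z)$ for which $\delta_n\rightarrow a_1$ in the weak$^*$-topology as $|n|\rightarrow\infty$ through $J^{(1)}$. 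In particular $a_1$ is a weak$^*$-accumulation point of the point masses $\{\delta_t: t\in\Z\}$.

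The final step is immediate: because $\|a_1^n\|_1 = 1 \geq 1$ for all $n\in\mathbb N$, Proposition \ref{notco} applies and gives that $E$ is not isomorphic to $c_0$ as a Banach space. There is no real obstacle here, as Proposition \ref{notco} already encapsulates the Szlenk-index computation that would otherwise be the hard part; the only minor point requiring care is the choice of $a_1$, since one must simultaneously arrange $\|a_1^n\|_1 \geq 1$ (to force the Szlenk index to exceed $\omega$) and $\|a_1^n\|_\infty \to 0$ (to secure weak$^*$-closure of $\ker\Theta$ via Lemma \ref{kerclosed}). The average of two adjacent point masses is perhaps the simplest element achieving both.
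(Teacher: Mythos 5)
Your proof is correct and is essentially identical to the paper's: same choice $a_1 = \tfrac12(\delta_0+\delta_1)$, same power-boundedness and Stirling estimate for $\|a_1^n\|_\infty$, and the same appeal to Theorem~\ref{summary} followed by Proposition~\ref{notco}. The only cosmetic difference is that you fix $J^{(1)}=\{2^n\}$ whereas the paper allows any additively sparse set, which changes nothing.
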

\begin{proof}
Let $k=1$, $J^{(1)}$ be any additively sparse set and $a_1=\frac{1}{2}(\delta_0+\delta_1)$.
Certainly $\|a^m_1\|_1=1$ for all $m\in\mathbb N$. We can approximate $\|a^m_1\|_\infty$
by Stirling's formula to estimate the central binomial coefficient.  Indeed
$a_1^m=2^{-m}\sum_{i=0}^m\binom{m}{i}\delta_i$ so that
\begin{equation}
\|a_1^m\|_\infty=\frac{1}{2^m}\binom{m}{\lfloor m/2\rfloor}.
\end{equation}
Taking $m=2n$, we have
\begin{equation}
\|a_1^{2n}\|_\infty=\frac{1}{2^{2n}}\frac{(2n)!}{(n!)^2}\sim\frac{1}{4^n}\frac{4^n}{\sqrt{\pi n}}\rightarrow 0,\quad \text{as }n\rightarrow\infty.
\end{equation}
Thus $a_1$ satisfies the requirements of Theorem~\ref{summary} and we can obtain shift-invariant preduals $E$ with $\delta_n\rightarrow a_1$ as $|n|\rightarrow\infty$ through additively sparse sets.  Since $\|a_1^m\|_1=1$ for all $m\in\mathbb N$,
Proposition~\ref{notco} shows that these preduals are not isomorphic as Banach spaces to $c_0$.
\end{proof}

\begin{remark}
The shift-invariant predual constructed in the previous theorem has Szlenk index
$\omega^2$.  Indeed, the proof shows that the Szlenk index must be larger than
$\omega$.  However, by \cite{BP}, $\S_1$ is homeomorphic to $[0,\omega^\omega]$
and thus $C(\S_1)$ has Szlenk index $\omega^2$.  By \cite[Corollary~3.10]{ajo},
the Szlenk index is always of the form $\omega^\alpha$, and so the only possibility
is that $E$ has Szlenk index $\omega^2$.
\end{remark}

\begin{proposition}
Recall that $\ell_1(\Z)$ carries a natural involution, where $\delta_n^* = \delta_{-n}$.
Let $E$ be a predual arising from Theorem~\ref{summary} where each $J^{(i)}$ is
\emph{symmetric} in the sense that $j\in J^{(i)}$ if and only if $-j\in J^{(i)}$,
and $a_i^*=a_i$ for each $i$.  Then $E$ makes the involution on $\ell_1(\Z)$ weak$^*$-continuous.
\end{proposition}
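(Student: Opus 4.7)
The plan is to reformulate weak$^*$-continuity of the involution as an invariance property of $E$, and then verify this invariance by lifting the relevant operation to the level of $C(\S_k)$. A routine duality computation (in the spirit of Proposition~\ref{equivalentforms}) shows that the conjugate-linear involution $*$ on $\ell_1(\Z)$ is weak$^*$-continuous with respect to $E$ if and only if $E$ is invariant under the conjugate-linear map $\sharp\colon\ell_\infty(\Z)\to\ell_\infty(\Z)$ defined by $x^\sharp(n)=\overline{x(-n)}$. So the task reduces to showing $\sharp(E)\subseteq E$.

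I would next define $\iota\colon\S_k\to\S_k$ by $\iota(\gamma_0,\gamma_1,\dots,\gamma_k)=(-\gamma_0,\gamma_1,\dots,\gamma_k)$ on $\T_k$ and $\iota(\infty)=\infty$. Plainly $\iota$ is an involutive semigroup homomorphism, and this is exactly where the symmetry of each $J^{(i)}$ enters: the substitution $j^{(i)}_r\mapsto -j^{(i)}_r$ preserves both $J^{(i)}$ and absolute values, so reading off from the definition of the basic neighbourhoods one sees immediately that $\iota^{-1}(\mathcal V_{\iota(\gamma),n})=\mathcal V_{\gamma,n}$. Thus $\iota$ is a semigroup homeomorphism of $\S_k$.

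With that in place, define a conjugate-linear map $\tilde J\colon C(\S_k)\to C(\S_k)$ by $\tilde J(f)(\gamma)=\overline{f(\iota(\gamma))}$. Since $\iota$ restricts to negation on $\Z\subseteq\S_k$, the restriction of $\tilde J(f)$ to $\Z$ coincides with $\sharp$ applied to $f\vert_\Z$, so it suffices to prove $\tilde J(F)\subseteq F$ for $F={}^\perp\ker\Theta$. Letting $\iota_*$ denote the pushforward of measures, a short computation gives $\ip{\mu}{\tilde J(f)}=\overline{\ip{\overline{\iota_*\mu}}{f}}$ for $\mu\in M(\S_k)=\ell_1(\S_k)$ and $f\in C(\S_k)$. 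Since $\ker\Theta$ is weak$^*$-closed one has $F^\perp=\ker\Theta$, and hence $\tilde J(F)\subseteq F$ is equivalent to the stability condition $\overline{\iota_*(\ker\Theta)}\subseteq\ker\Theta$.

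The crux of the argument is then the algebraic identity
\begin{equation*}
\Theta(\overline{\iota_*\mu})=\Theta(\mu)^*,\qquad \mu\in\ell_1(\S_k).
\end{equation*}
Writing $\mu=\mu_\infty\delta_\infty+\sum_{\gamma\in\S_k^0}\mu_\gamma\delta_\gamma$ as in (\ref{thomas_eq_one}), the right-hand side expands, using that $*$ is a conjugate-linear anti-homomorphism of $\ell_1(\Z)$, to $\sum_\gamma\overline{\mu_\gamma}\,\delta_{-\gamma_0}\prod_i(a_i^*)^{\gamma_i}$, while the left-hand side equals $\sum_\gamma\overline{\mu_\gamma}\,\delta_{-\gamma_0}\prod_i a_i^{\gamma_i}$ (using $\Theta(\delta_\infty)=0$ and $\Theta(\delta_{\iota(\gamma)})=\delta_{-\gamma_0}\prod_i a_i^{\gamma_i}$). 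The hypothesis $a_i^*=a_i$ collapses these to the same expression, so $\mu\in\ker\Theta$ forces $\Theta(\overline{\iota_*\mu})=\Theta(\mu)^*=0$, closing the argument. No individual step is especially deep; the main thing to watch is the bookkeeping of conjugate-linearity, and the fact that the two hypotheses (symmetry of the $J^{(i)}$ and self-adjointness of the $a_i$) enter at distinct stages --- one to make $\iota$ continuous, the other to make the kernel identity work.
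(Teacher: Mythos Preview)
Your proof is correct and follows essentially the same route as the paper: both define the same involutive semigroup homeomorphism on $\S_k$ (the paper calls it $\phi$), use the symmetry of the $J^{(i)}$ to see it preserves the basic neighbourhoods $\mathcal V_{\gamma,n}$, transfer it to a conjugate-linear map on $C(\S_k)$ (the paper writes $f^\dagger$ for your $\tilde J(f)$), and then use $a_i^*=a_i$ to verify the key identity $\Theta(\mu^*)=\Theta(\mu)^*$. The only packaging difference is that you first reduce weak$^*$-continuity of $*$ to the invariance condition $\sharp(E)\subseteq E$ (in the spirit of Proposition~\ref{equivalentforms}) and then check this via $\tilde J(F)\subseteq F$, whereas the paper argues directly with nets: given $b_i\to b$ and $b_i^*\to c$ weak$^*$ in $\ell_1(\Z)$, it passes to a subnet converging weak$^*$ in $\ell_1(\S_k)$, uses the duality identity $\ip{c^*}{f}=\overline{\ip{c}{f^\dagger}}$ to relate the two limits, and applies $\Theta$ together with Proposition~\ref{convergence}. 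These are two ways of saying the same thing; your invariance formulation is arguably a little cleaner.
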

\begin{proof}
As the $J^{(i)}$ are symmetric, the basic neighbourhoods $\mathcal V_{\gamma,n}$ are invariant under the map 
\begin{equation}
\phi:\S_k\rightarrow\S_k;\quad (\beta_0,\beta_1,\cdots,\beta_k)\mapsto (-\beta_0,\beta_1,\cdots,\beta_k),\quad \infty\mapsto\infty
\end{equation}
and so $\phi$ is continuous. The involution $^*$ on $\ell_1(\Z)$ extends to $\ell_1(\S_k)$ by 
\begin{equation}
\sum_{\gamma\in\S_k}c_\gamma\delta_\gamma\mapsto\sum_{\gamma\in\S_k}\overline{c_\gamma}\delta_{\phi(\gamma)},
\end{equation}
and the assumption that $a_i^*=a_i$ for $i=1,\cdots,k$ gives $\Theta(\mu^*)=\Theta(\mu)^*$ for $\mu\in \ell_1(\S_k)$. Since $\phi$ is continuous, we also obtain an involution $^\dagger$ on $C(\S_k)$ by $f^\dagger(\gamma)=\overline{f(\phi(\gamma))}$.
so that
\begin{equation}
\ip{c^*}{f}=\overline{\ip{c}{f^\dagger}},\quad c\in\ell_1(\S_k),\ f\in C(\S_k).
\end{equation}
Now suppose $(b_i)$ is a net in $\ell_1(\Z)$ such that $b_i\rightarrow b$ and $b_i^*\rightarrow c$ in the weak$^*$-topology on $\ell_1(\Z)$ induced by $E$. Passing to a subnet, we may assume that $b_i\rightarrow \mu\in\ell_1(\S_k)$ and $b_i^*\rightarrow\nu\in\ell_1(\S_k)$ in the weak$^*$-topology induced by $C(\S_k)$, so that $b=\Theta(\mu)$ and $c=\Theta(\nu)$. Now
\begin{equation}
\ip{\mu}{f}=\lim_i\ip{b_i}{f}=\lim_i\overline{\ip{b_i^*}{f^\dagger}}=\overline{\ip{\nu}{f^\dagger}}=\ip{\nu^*}{f},\quad f\in C(\S_k),
\end{equation}
so that $\nu^*=\mu$ and $\mu^*=\nu$.  Thus 
\begin{equation}
b^*=\Theta(\mu)^*=\Theta(\mu^*)=\Theta(\nu)=c,
\end{equation}
and the involution is weak$^*$-continuous.
\end{proof}

Examples of this phenomena can be obtained by using the symmetric additively sparse set $\{\pm (n!):n>0\}$.

\begin{example}\label{not_iso_eg}
We thank Yemon Choi, \cite{mof}, for pointing us to this example.  Let
$a=5^{-1/2}(\delta_0+\delta_1-\delta_2) \in \ell_1(\Z)$, so that $\|a\|_1 = 3/\sqrt5>1$.
In \cite[Page~39]{newman}, Newman shows that $a$ is power bounded.  The Fourier transform
of $a$ is $f(z)=5^{-1/2}(1+z-z^2)$ for $z\in\mathbb T$.  Thus, for $z=e^{i\theta}$,
\begin{equation} |f(z)| = 5^{-1/2}\big|z^{-1}+1-z\big| = 5^{-1/2} \big| 1-2i\sin\theta \big|
= \big( 1-\textstyle\frac45\cos^2\theta\big)^{1/2}. \end{equation}
Thus, for any $\varepsilon>0$, if $n$ is sufficiently large, then $|f^n|<\varepsilon$
except on intervals of length at most $\varepsilon$ about the points $\theta=\pi/2, 3\pi/2$.
Thus $\lim_n \int_{\mathbb T} f(z)^n z^m \ dz = 0$ uniformly in $m\in\Z$, which shows that
$\lim_n \|a^n\|_\infty = 0$.

We can hence apply our theorem with $k=1$ and $J$ being any additively sparse set.
The resulting predual $E\subseteq\ell_\infty(\Z)$ is not \emph{isometric},
in the sense that the map $\iota_E:\ell_1(\Z)\rightarrow E^*$ is only an isomorphism,
not an isometric isomorphism.  This follows, as for $x\in E$,
\begin{equation} \lim_{n\in J} \ip{\iota_E(\delta_n)}{x} =
 \lim_{n\in J} \ip{x}{\delta_n} = \ip{x}{a} = \ip{\iota_E(a)}{x}. \end{equation}
So if $\iota_E$ were an isometry, we would have that $1 < \|a\| = \|\iota_E(a)\|
 \leq \limsup_n \|\iota_E(\delta_n)\| = 1$, a contradiction.
\end{example}

\section{Questions}

We end the paper with a range of open questions regarding these preduals.

\begin{enumerate}
\item Describe all possible semigroups $\S$ arising as part of a minimal pair
  inducing a shift-invariant predual of $\ell_1(\Z)$.
\item What are the Banach space isomorphism classes of shift-invariant preduals?
\item What is the Banach space isomorphism class of the shift-invariant predual
  constructed in Theorem~\ref{not_co_thm}?
\item For any countable ordinal $\alpha$, does there exist a shift-invariant predual
  with Szlenk index at least $\alpha$?
\item Characterise those $a\in\ell_1(\Z)$ which occur as weak$^*$-limit points of
  $\{\delta_n:n\in\Z\}$.  In particular, is the condition $\lim_n \|a^n\|_\infty=0$
  necessary as well as sufficient?
\item The concrete shift-invariant preduals $F^{(\lambda)}$ of Section~\ref{haydon}
  are \emph{cyclic} in that they are the minimal closed, shift-invariant subspaces
  containing the specified element $x_0$.  Characterise the cyclic shift-invariant
  preduals of $\ell_1(\Z)$.
\end{enumerate}

\noindent Matthew Daws  \\
School of Mathematics  \\
University of Leeds  \\
Leeds LS2 9JT  \\
United Kingdom  \\
Email: \texttt{matt.daws@cantab.net}

\medskip

\noindent Richard Haydon\\
Brasenose College\\
Radcliffe Square\\
Oxford OX1 4AJ\\
United Kingdom  \\
Email: \texttt{richard.haydon@bnc.ox.ac.uk}

\medskip

\noindent Thomas Schlumprecht \\
Department of Mathematics\\
Mailstop 3368\\
Texas A\&M University\\
College Station, TX 77843-3368\\
United States of America\\
Email: \texttt{schlump@math.tamu.edu}

\medskip

\noindent Stuart White  \\
School of Mathematics and Statistics  \\
University of Glasgow  \\
Glasgow G12 8QW  \\
United Kingdom  \\
Email: \texttt{stuart.white@glasgow.ac.uk}

\end{document}